\numberwithin{equation}{section}
\newtheorem{bigthm}{Theorem}   
\newcommand{\sumtwo}[2]{\sum_{\substack{#1 \\ #2}}} 
\newcommand{\abs}[1]{\left| #1\right|}
\newcommand{\calA}{\mathcal{A}}
\newcommand{\calB}{\mathcal{B}}
\newcommand{\calC}{\mathcal{C}}
\newcommand{\calD}{\mathcal{D}}
\newcommand{\calE}{\mathcal{E}}
\newcommand{\calF}{\mathcal{F}}
\newcommand{\calM}{\mathcal{M}}
\newcommand{\calN}{\mathcal{N}}
\newcommand{\calP}{\mathcal{P}}
\newcommand{\calQ}{\mathcal{Q}}
\newcommand{\calR}{\mathcal{R}}
\newcommand{\calU}{\mathcal{U}}
\newcommand{\bbE}{\mathbb{E}}
\newcommand{\bbH}{\mathbb{H}}
\newcommand{\bbL}{\mathbb{L}}
\newcommand{\bbN}{\mathbb{N}}
\newcommand{\bbP}{\mathbb{P}}
\newcommand{\bbR}{\mathbb{R}}
\newcommand{\bbU}{\mathbb{U}}
\newcommand{\bbV}{\mathbb{V}}
\newcommand{\bbX}{\mathbb{X}}
\newcommand{\bbZ}{\mathbb{Z}}
\newcommand{\sfx}{{\sf x}}
\newcommand{\sfy}{{\sf y}}
\newcommand{\sfz}{{\sf z}}
\newcommand{\sfu}{{\sf u}}
\newcommand{\sfv}{{\sf v}}
\newcommand{\sfs}{{\sf s}}
\newcommand{\sfe}{{\sf e}}
\newcommand{\sfp}{{\sf p}}
\newcommand{\sfr}{{\sf r}}
\newcommand{\sfC}{{\sf C}}
\newcommand{\sfG}{{\sf G}}
\newcommand{\sfI}{{\sf I}}
\newcommand{\sfL}{{\sf L}}
\newcommand{\sfR}{{\sf R}}
\newcommand{\sfS}{{\sf S}}
\newcommand{\sfT}{{\sf T}}
\newcommand{\setof}[2]{\left\{#1 \,:\, #2 \right\}}
\newcommand{\given}{\,|\,}
\newcommand{\lb}{\left(}
\newcommand{\rb}{\right)}
\newcommand{\lbr}{\left\{}
\newcommand{\rbr}{\right\}}
\newcommand{\dd}{{\rm d}}
\newcommand{\step}[1]{S{\small TEP}\,#1.}
\newcommand{\Var}{\bbV{\rm ar}}
\newcommand{\1}{\mathbf{1}}
\newcommand{\smo}[1]{{\mathrm o}(#1)}
\newcommand{\be}[1]{\begin{equation}\label{#1}}
\newcommand{\ee}{\end{equation}}
\newcommand{\Hla}{H_\lambda}
\newcommand{\RWP}{\sfp}
\newcommand{\bsetof}[2]{\bigl\{#1 \,:\, #2 \bigr\}}
\newcommand{\sfw}{{\sf w}}
\DeclareMathOperator*{\slim}{{\sf s}-lim}
\newcommand{\eig}{\zeta}	
\begin{document}

\title{An invariance principle to Ferrari-Spohn diffusions}
\author{
Dmitry Ioffe\inst{1}
\thanks{DI was supported by the Israeli Science Foundation grants 817/09 
and 1723/14.}
\and
Senya Shlosman\inst{2}
\and
Yvan Velenik\inst{3}
\thanks{YV was partially supported by the Swiss National Science Foundation.}
}

\institute{
William Davidson Faculty of Industrial Engineering and Management,
Technion - Israel Institute of Technology,
Technion City, Haifa 32000,
Israel\\
\email{ieioffe@technion.ac.il}
\and
Aix Marseille Universit\'e, Universit\'e de Toulon,
CNRS, CPT UMR 7332, 13288, Marseille, France, and
Inst. of the Information Transmission Problems,
RAS, Moscow, Russia\\
\email{senya.shlosman@univ-amu.fr}
\and
Section de Math\'ematiques,
Universit\'e de Gen\`eve,
2-4, rue du Li\`evre,
Case postale 64,
1211 Gen\`eve\\
\email{Yvan.Velenik@unige.ch}
}
\date{}

\maketitle

\begin{abstract}
We prove an invariance principle for a class of  tilted $1+1$-dimensional SOS
models or, equivalently, for a class of tilted random walk bridges in $\bbZ_+$.
The limiting objects are stationary reversible ergodic diffusions with drifts
given by the logarithmic derivatives of the ground states of associated
singular Sturm-Liouville operators.  In the case of a linear area tilt,
we recover the Ferrari-Spohn diffusion with $\log$-Airy drift, which was derived
in~\cite{FS05} in the context of Brownian motions conditioned to stay above
circular and parabolic  barriers.
\end{abstract}

\keywords{Invariance principle -- critical prewetting -- entropic repulsion --
random walk -- Ferrari-Spohn diffusions}

\section{Introduction and Results}

\subsection{Physical motivations}

We start with an informal description, in a restricted setting, of the
effective interface model at the core of our study; a detailed description in
the more general framework considered in the present work will be given in
Subsection~\ref{ssec:RW_with_Tilted_Areas}.

We consider a Gibbs random field $(X_i)_{1\leq i\leq N}$, with $X_i\in\bbN$ for
all $i$, and effective Hamiltonian
\[
\Hla = \sum_{i=1}^{N-1} \Phi (X_{i+1}-X_i) + \sum_{i=1}^N V_\lambda(X_i)\,,
\]
depending on a parameter $\lambda>0$. Later, we shall allow rather
general forms for the interaction $\Phi$ and for the external potential
$V_\lambda$. For the purpose of this introductory section, let us however
restrict the discussion to the physically very relevant case of
$V_\lambda(x) = \lambda x$, and assume that $\Phi$ is symmetric and grows fast
enough: for example, $\Phi(x)=x^2$, $\Phi(x)=|x|$ or $\Phi(x) = \infty\cdot
\1_{\abs{x}>R}$.
Let us denote by $\mu_{N;\lambda}$ the corresponding Gibbs measure with
boundary condition $X_1=X_N=0$.

With this choice, this model can be interpreted as follows. The random variable
$X_i$ models
the height of an interface above the site $i$. This interface separates an
equilibrium phase (above the interface) and a layer of unstable phase
(delimited by the interface and the wall located at height $0$). The parameter
$\lambda$ corresponds to the excess free energy associated to the unstable
phase.

Of course, when $\lambda=0$, the distribution of $X$ is just that of a
random walk, conditioned to stay positive,
with distribution of jumps given by
\be{eq:Phi-jumps}
p_x = \frac{{\rm e}^{-\Phi (x )}}{\sum_y {\rm e}^{-\Phi (y )}} .
\ee
In particular, the field delocalizes as
$N\to\infty$. When $\lambda>0$, however, the field remains localized uniformly
in $N$. We shall be mostly interested in the behavior as $\lambda\downarrow 0$
(say, either after letting $N\to\infty$, or by assuming that $N=N(\lambda)$
grows fast enough). In that case, one can prove that the typical width of the
layer is of order $\lambda^{-1/3}$ and that the correlation along the interface
is of order $\lambda^{-2/3}$~\cite{AS85,HV04}.

In the present work, we are interested in the scaling limit of the random field
$X$ as $\lambda\downarrow 0$, that is, in the limiting behavior of
$x_\lambda(t) = \lambda^{1/3}X_{[\lambda^{-2/3}t]}$. As stated in
Theorem~\ref{thm:A} below, in the particular case considered here, the scaling
limit is given by the diffusion on $(0,\infty)$ with generator
\[
\frac{\sigma^2}{2}\frac{\dd^2}{\dd r^2} + \sigma^2
\frac{\varphi_0^\prime }{\varphi_0}\frac{\dd}{\dd r}\,,
\]
where (see \eqref{eq:Phi-jumps}) $\sigma^2=\sum_x x^2 p_x$ and $\varphi_0 =
\mathsf{Ai}(\sqrt[3]{\frac{2}{\sigma^2}}\, x - \omega_1)$ with $-\omega_1$ the
first zero of the Airy function {\sf Ai}. This diffusion was first introduced
by Ferrari and Spohn in the context of Brownian motions conditioned to stay
above circular and parabolic  barriers~\cite{FS05}.

This scaling limit should be common to a wide class of systems, of which the
following are but a few examples:
\begin{itemize}
\item Critical prewetting in the 2d Ising model: behavior of the film of
unstable negatively magnetized layer induced by $(-)$-boundary
conditions, in the presence of a positive bulk magnetic field~\cite{V04}; see
Figure~\ref{fig:Ising}.
\item Interfacial adsorption at the interface between two equilibrium
phases~\cite{F84,SHK84}.
\item Geometry of the top-most layer of the $2+1$-dimensional SOS model above a
wall~\cite{CLMST12}.
\item Island of activity in kinetically constrained models~\cite{BLT12}.
\end{itemize}
\begin{figure}
\begin{center}
\includegraphics[width=5cm]{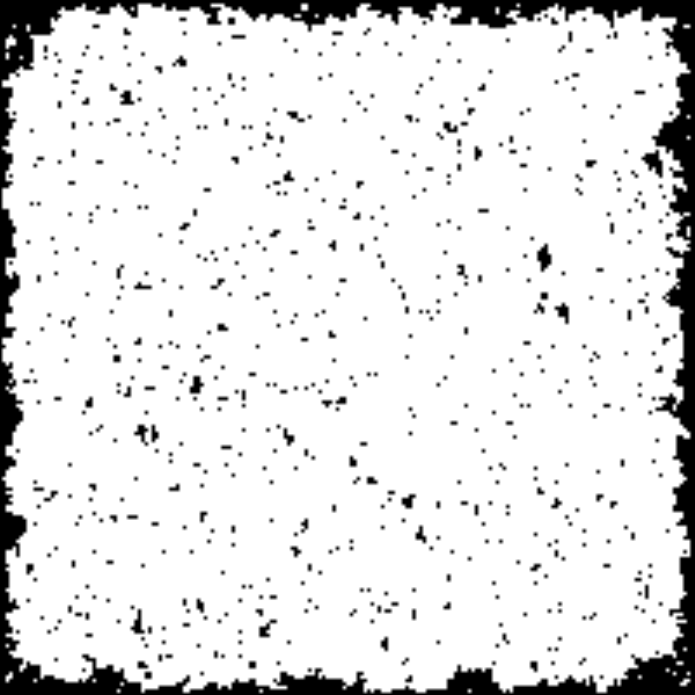}
\end{center}
\caption{A low-temperature two-dimensional Ising model in a box of sidelength
$N=200$ with negative boundary condition and a positive magnetic field of the
form $h=c/N$. When $c$ is above a critical threshold, the bulk of the system is
occupied by a positively magnetized phase, while the walls are wet by a film of
(unstable) negatively magnetized phase~\cite{SS96}. For a slightly different
geometry, it was shown in~\cite{V04} that this film has a width (along the
walls) of order $h^{-1/3+o(1)}$ as $h\downarrow 0$.}
\label{fig:Ising}
\end{figure}
\subsection{Limiting objects} Limiting objects are quantified in terms of 
Sturm-Liouville problems. 
\paragraph{A Sturm-Liouville problem}
The basic space we shall work with is $\bbL_2(\bbR_+)$.
The notations $\|\cdot\|_2$ and $\langle \cdot ,\cdot\rangle_2$ are reserved
for the corresponding norm and scalar product.
Given  $\sigma >0$ and   a non-negative function
$q\in \sfC^2 (\bbR_+ )$ which satisfies $\lim_{r\to\infty} q (r) = \infty$,
 consider the following family of singular  Sturm-Liouville operators on
$\bbR_+$:
\be{eq:SL-operators}
\sfL=\sfL_{\sigma,q} = \frac{\sigma^2}{2}\frac{\dd^2}{\dd r^2} - q (r)
,
\ee
with zero boundary condition $\varphi (0) = 0$.

\medskip
It is a classical result~\cite{CL55} that
$\sfL$ possesses a complete  orthonormal family $\lbr \varphi_i\rbr$  of
simple eigenfunctions
in $\bbL_2 \lb \bbR_+ \rb$ with eigenvalues
\be{eq:eigenv-ST}
0 >- \eig_0 > -\eig_1 > -\eig_2 > \dots ;\ \lim \eig_j = \infty.
\ee
The eigenfunctions $\varphi_i$ are smooth and $\varphi_i$ has exactly $i$
zeroes in $(0,\infty)$, $i=0,1,\ldots $.

The domain of (the closure of) $\sfL$ in $\bbL_2(\bbR_+)$ is
\be{eq:DomL}
\calD (\sfL ) = \lbr f = \sum_i a_i\varphi_i~ :~ \sum_i \eig_i^2 a_i^2 <\infty
\rbr
\quad
{\rm and}\quad  \sfL f = - \sum_i \eig_i a_i\varphi_i\ \text{for $f\in\calD$}.
\ee
Clearly, $\calD (\sfL )$ is dense in $\bbL_2(\bbR_+)$.
Indeed, since any function $f \in \bbL_2$ can be written as
$f = \sum_i a_i\varphi_i$, the linear space of all finite linear combinations
$\calU = \lbr \sum_{i=0}^{N} a_i \varphi_i \rbr \subset \calD(\sfL)$
is dense in $\bbL_2$. For any function $f= \sum_i a_i\varphi_i \in \calD (\sfL)$,
$\lim_{N\to\infty} \sfL\bigl(\sum_{i=0}^N a_i  \varphi_i\bigr) = \sfL f$. In
particular,
$\calU$ is a core for $\sfL$.

If $f = \sum_i a_i\varphi_i \in\calD$ and $\lambda > -\eig_0$, one has
\be{eq:DissipL}
\left\| \lb \lambda \sfI  -\sfL\rb f \right\|_2 = \left\|\sum_i  (\lambda +
\eig_i )a_i \varphi_i\right\|_2
\geq (\lambda +\eig_0 )\|f\|_2 ,
\ee
which shows that $\sfL  + \eig_0 \sfI$ is dissipative.

Furthermore, for any $\lambda > -\eig_0$,
${\rm Range}\lb \lambda \sfI -\sfL\rb = \bbL_2 (\bbR_+ )$,
and $\sfL$ has a compact resolvent
$\sfR_\lambda = \lb \lambda \sfI - \sfL\rb^{-1}$.
Indeed,
\[
\lb \lambda \sfI  -\sfL\rb
\sum_i\frac{a_i}{\lambda +\eig_i }\varphi_i  = \sum_i a_i\varphi_i
\quad{\rm and}\quad
 \sfR_\lambda\lb \sum_i a_i\varphi_i \rb = \sum_i \frac{a_i}{\lambda +\eig_i}
\varphi_i .
\]
By the Hille-Yosida theorem, $\sfL +\eig_0\sfI $ generates a strongly
continuous
contraction semigroup $\sfT^t$ on $\bbL_2 (\bbR_+ )$. Explicitly,
\be{eq:Tt}
\sfT^t \lb \sum_i a_i \varphi_i \rb = \sum_i \mathrm{e}^{- \lb \eig_i -\eig_0\rb
t}
a_i \varphi_i .
\ee

\paragraph{Ferrari-Spohn diffusions}
Define
\be{eq:FS-Gen}
\sfG_{\sigma, q} \psi =
 \frac{1}{\varphi_0 } \lb \sfL +\eig_0\rb\lb \psi\varphi_0 \rb =
\frac{\sigma^2}{2}\frac{\dd^2\psi }{\dd r^2} + \sigma^2
\frac{\varphi_0^\prime }{\varphi_0}\frac{\dd\psi }{\dd r} =
\frac{\sigma^2}{2\varphi_0^2}
\frac{\dd }{\dd r}\lb\varphi_0^2\frac{\dd \psi }{\dd r} \rb .
\ee
The sub-indices $\sigma$ and $q$ will be dropped whenever there is no risk of
confusion.
We shall say that $\sfG_{\sigma, q}$  is the  generator of a  Ferrari-Spohn
diffusion on $(0,\infty )$.
The diffusion itself is ergodic and reversible with respect to the measure
$\dd\mu_0(r)=\varphi_0^2(r)\dd r$. In the sequel, we shall denote by
$\sfS_{\sigma, q}^t$ the corresponding semigroup,
\be{eq:T-FS}
\sfS_{\sigma, q}^t\psi = \frac{1}{\varphi_0}\sfT^t(\psi\varphi_0),
\ee
and by $\bbP_{\sigma , q}$ the corresponding path measure.

\subsection{Random walks with tilted areas}
\label{ssec:RW_with_Tilted_Areas}
Let $p_y$ be an irreducible random walk kernel on $\bbZ$.
The probability of a finite trajectory $\bbX = (X_1, X_2, \ldots, X_k)$ is
$\sfp(\bbX) = \prod_i p_{X_{i+1} - X_i}$.
Let $\sfu,\sfv\in\bbN$ and $M,N\in\bbZ$ with $M\leq N$.
Let $\calP^{\sfu,\sfv}_{M,N,+}$ be the family of trajectories starting at
$\sfu$ at time $M$, ending at $\sfv$ at time $N$ and staying positive during
the time interval $\{M, \ldots ,N\}$.
For $N>0$, we shall use a shorthand notations
$\calP^{\sfu ,\sfv}_{N,+} = \calP^{\sfu ,\sfv}_{-N, N,+}$ and
$\hat\calP^{\sfu ,\sfv}_{N,+}= \calP^{\sfu ,\sfv}_{1,N,+}$.

\paragraph{Assumptions on $\sfp$}
Assume that
\begin{equation}
\label{eq:Assumption}
\sum_\sfz \sfz p_\sfz = 0 = {\sum_\sfz \sfz p_{-\sfz}}
\quad\text{and $\sfp$ has finite exponential moments}.
\end{equation}
In the sequel, we shall use the notation
\be{eq:sigma}
\sigma^2 = \sum_\sfz \sfz^2 p_\sfz = {\sum_\sfz \sfz^2 p_{-\sfz}} = \Var_{\sfp}
(X) <\infty .
\ee

\paragraph{The model}
Let $\lbr V_\lambda\rbr_{\lambda >0}$ be a family of self-potentials,
$V_\lambda :\bbR_+\to\bbR_+$.
Given $\lambda >0$, define the partition function
\be{eq:pf}
Z^{\sfu ,\sfv}_{N,+,\lambda} = \sum_{\bbX\in\calP^{\sfu ,\sfv}_{N,+}} {\rm
e}^{-\sum_{-N}^N
V_\lambda(X_i) }\sfp (\bbX) ,
\ee
and, accordingly, the probability distribution
\be{eq:pd}
\bbP^{\sfu,\sfv}_{N,+,\lambda} (\bbX) = \frac{1}{Z^{\sfu ,\sfv}_{N,+,\lambda}}
\mathrm{e}^{-\sum_{-N}^N V_\lambda (X_i) }\sfp (\bbX) .
\ee
The term $\sum_{-N}^N V_\lambda ( X_i )$ represents a generalized (non-linear)
area below
the trajectory $\bbX$. It reduces to (a multiple of) the usual area when
$V_\lambda (x) = \lambda x$. We make the following set of assumptions on
$V_\lambda $:

\paragraph{Assumptions on $V_\lambda$}
For any $\lambda>0$, the function $V_\lambda$ is
continuous monotone increasing
and satisfies
\be{eq:VL-1}
V_\lambda (0) = 0\quad{\rm and}\quad \lim_{x\to\infty} V_\lambda (x) = \infty .
\ee
In particular, the relation
\be{eq:HL-1}
\Hla^2 V_\lambda (\Hla ) = 1
\ee
determines unambiguously the quantity $\Hla$. Furthermore, we make the
assumptions that $\lim_{\lambda\to 0} \Hla =\infty$ and that
there exists a function $q\in \sfC^2 (\bbR^+ )$ such that
\be{eq:HL-2}
\lim_{\lambda\to 0} \Hla^2 V_\lambda (r \Hla ) = q (r),
\ee
uniformly on compact subsets of $\bbR_+$.
Note  that $\Hla$, respectively $\Hla^2$,  is the spatial, respectively
temporal, scale for the invariance principle which is formulated below in
Theorem~\ref{thm:A}.

Finally, we shall assume that there
exist
$\lambda_0 >0$ and a (continuous non-decreasing) function $q_0\geq 0$ with
$\lim_{r\to\infty} q_0 (r) = \infty$ such that, for all $\lambda\leq\lambda_0$,
\be{eq:HL-3}
\Hla^2 V_\lambda (r \Hla ) \geq q_0(r) \text{ on } \bbR_+ .
\ee

\subsection{Main result}
\label{sub:Main}
It will be convenient to think about $\bbX$ as being frozen outside $\{-N,
\ldots,N\}$. In this way, we can consider $\bbP^{\sfu ,\sfv}_{N,+ ,\lambda}$
as a distribution on the set of doubly infinite paths $\bbN^\bbZ$.

We set  $h_\lambda = \Hla^{-1}$. The paths are rescaled as follows:
For $t\in h_{\lambda}^{2}\bbZ$, define
\be{eq:xN}
x_\lambda (t) = h_{\lambda} X_{\Hla^2 t} = \frac{1}{H_\lambda}X_{\Hla^2 t}  .
\ee
$x_\lambda (t)$ is then extended to $t\in\bbR$ by linear interpolation. In
this way, we can talk about the induced  distribution of
$\bbP^{\sfu,\sfv}_{N,+,\lambda}$ on the space of continuous function $\sfC
[-T,T]$,
for any $T\geq 0$.
\begin{bigthm}
\label{thm:A}
Let $\lambda_N$ be a sequence satisfying $\lim_{N\to\infty}\lambda_N = 0$.
Assume, furthermore, that $\lim_{N\to\infty}
Nh_{\lambda_N}^2=+\infty$. Set $x_N(\cdot) =  x_{\lambda_N}(\cdot)$.
Fix any $c\in (0,\infty)$.
Then, as $N\to\infty$, the distributions of $x_N(\cdot)$ under
$\bbP^{\sfu,\sfv}_{N,+,\lambda}$ are, uniformly in $\sfu,\sfv\leq c
H_{\lambda_N}$, weakly convergent to the stationary  Ferrari-Spohn diffusion
$x_{\sigma,q}(\cdot)$ on $\bbR_+$ with the generator $\sfG_{\sigma,q}$
specified in~\eqref{eq:FS-Gen}.
\end{bigthm}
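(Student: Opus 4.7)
The approach is to represent $\bbP^{\sfu,\sfv}_{N,+,\lambda}$ via a transfer operator, perform a Doob $h$-transform to get a reversible Markov chain, and show that, after rescaling, the generator of that chain converges to $\sfG_{\sigma,q}$ of \eqref{eq:FS-Gen}. Introduce on $\ell_2(\bbN)$ the positive operator with kernel
\[
T_\lambda(x,y) = e^{-V_\lambda(x)/2}\, p_{y-x}\, e^{-V_\lambda(y)/2}\, \1_{x,y\geq 1},
\]
so that $Z^{\sfu,\sfv}_{N,+,\lambda} = e^{(V_\lambda(\sfu)+V_\lambda(\sfv))/2}\, T_\lambda^{2N}(\sfu,\sfv)$ and all finite-dimensional marginals of $\bbP^{\sfu,\sfv}_{N,+,\lambda}$ take transfer-matrix form. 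By the coercivity \eqref{eq:HL-3} and the exponential moments of $\sfp$, $T_\lambda$ is compact; the Perron-Frobenius theorem provides a simple maximal eigenvalue $e^{-E^\lambda_0}$ with a positive eigenvector $\psi^\lambda_0$ and a spectral gap.

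\textbf{Spectral scaling.} Define the rescaled ground state
\[
\varphi_\lambda(r) = h_\lambda^{-1/2}\, \psi^\lambda_0(\lfloor r/h_\lambda\rfloor) \in \bbL_2(\bbR_+).
\]
A local CLT expansion for the convolution with $(p_z)$ on scale $h_\lambda$, together with $\sum_\sfz \sfz p_\sfz = 0$ and $\sum_\sfz \sfz^2 p_\sfz = \sigma^2$, produces $\frac{\sigma^2}{2}\partial_r^2$ in the limit; combined with \eqref{eq:HL-2} this yields, in the strong sense required by Trotter-Kato, that $h_\lambda^{-2}(T_\lambda - \sfI)$ converges to $\sfL_{\sigma,q}$ of \eqref{eq:SL-operators}. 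The uniform comparison \eqref{eq:HL-3} with $q_0$ provides Agmon-type tail estimates that prevent loss of mass at infinity, and the Sturm-Liouville theory recalled in \eqref{eq:eigenv-ST}-\eqref{eq:DissipL} then delivers $h_\lambda^{-2}(1 - e^{-E^\lambda_0})\to \eig_0$ and $\varphi_\lambda\to\varphi_0$ in $\bbL_2(\bbR_+)$, with a quantitative spectral gap inherited from $\eig_1 - \eig_0$.

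\textbf{Doob transform, semigroups, and the invariance principle.} The $h$-transformed kernel
\[
\widetilde T_\lambda(x,y) = e^{E^\lambda_0}\, T_\lambda(x,y)\, \psi^\lambda_0(y)/\psi^\lambda_0(x)
\]
is stochastic and reversible with respect to $\pi_\lambda = (\psi^\lambda_0)^2$. Combining the previous step with the conjugation identity \eqref{eq:FS-Gen} gives $h_\lambda^{-2}(\widetilde T_\lambda - \sfI) \to \sfG_{\sigma,q}$ on a core, so the Ethier-Kurtz theorem yields semigroup convergence $\widetilde T_\lambda^{\lfloor \Hla^2 t\rfloor}\to \sfS^t_{\sigma,q}$. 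Because $N h_\lambda^2 \to \infty$, the uniform spectral gap forces the marginal of $\bbP^{\sfu,\sfv}_{N,+,\lambda}$ at the boundary times $\pm \Hla^2 T$ to converge, after rescaling, to $\varphi_0^2$, uniformly in $\sfu,\sfv\leq c\Hla$ (this is where the range of allowed boundary conditions enters). Tightness on $\sfC[-T,T]$ follows from a Kolmogorov moment bound on increments, obtained by Radon-Nikodym comparison with the free (non-tilted) random walk on mesoscopic time scales. Together, these ingredients deliver weak convergence of $x_N(\cdot)$ to the stationary Ferrari-Spohn diffusion $\bbP_{\sigma,q}$.

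\textbf{Main obstacle.} The delicate part is the spectral matching: relating the discrete operator $T_\lambda$ on $\ell_2(\bbN)$, with the Dirichlet condition at the origin arising from $X_i\geq 1$, to the singular Sturm-Liouville operator on $\bbL_2(\bbR_+)$ with $\varphi(0) = 0$, strongly enough to propagate eigenfunction convergence \emph{and} quantitative spectral gaps uniformly in $\lambda$. The limit is singular at $r = 0$, and since \eqref{eq:HL-2} holds only on compacts, the uniform-in-$\lambda$ tail control near $r = \infty$ must be extracted entirely from the domination \eqref{eq:HL-3} by $q_0$; producing these bounds and converting them into a uniform compactness statement for the rescaled $T_\lambda$ is the technical heart of the proof.
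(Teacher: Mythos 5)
Your overall strategy — transfer operator, Krein-Rutman/Perron-Frobenius ground state, Doob $h$-transform, Trotter--Kato/Ethier--Kurtz generator convergence on $\bbL_2(\bbR_+)$, and tightness by comparison with the free walk — is exactly the skeleton of the paper's proof, and your identification of the uniform tail control coming entirely from~\eqref{eq:HL-3} as the technical heart is also correct.

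There are, however, two places where your outline deviates in ways worth flagging. First, you assert that the Doob-transformed kernel is reversible with respect to $(\psi^\lambda_0)^2$; this is false in the generality of the hypotheses, since the walk $\sfp$ is only assumed centered, not symmetric, so $T_\lambda$ is not self-adjoint. The invariant measure of the ground-state chain is $\mu_\lambda = c_\lambda\,\phi_\lambda\phi^*_\lambda$, built from the \emph{two} principal eigenfunctions of $T_\lambda$ and $T^*_\lambda$, and one must track both; only after passing to the limit do they merge (both converge to $\varphi_0$, so the Ferrari--Spohn limit is reversible even though the pre-limit chain need not be). Second, for converting the statement about the stationary ground-state chain $\bbP_\lambda$ into the statement about the bridge measures $\bbP^{\sfu,\sfv}_{N,+,\lambda}$ uniformly over $\sfu,\sfv\leq c\Hla$, you invoke a quantitative, uniform-in-$\lambda$ spectral gap for the discrete $T_\lambda$ "inherited from $\eig_1-\eig_0$". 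Such an inheritance is not automatic: the energy estimate and Rellich compactness used in the paper give convergence of $\sfe_\lambda\to\eig_0$ and $\phi_\lambda\to\varphi_0$ along subsequences, but do not by themselves produce a uniform lower bound on the gap of $T_\lambda$, and you acknowledge this as the hard part without supplying it. The paper sidesteps the issue entirely: instead of spectral estimates it couples two bridge processes (with arbitrary boundary conditions $\leq C\Hla$) using a second-moment/Paley--Zygmund intersection argument inside $\eta$-good time intervals, obtaining exponential-in-$N\Hla^{-2}$ decay of the total variation distance to $\bbP_\lambda$. This coupling route requires only one-dimensional random walk estimates and avoids any uniform spectral-gap input; so while your gap-based plan is plausible, as written it leaves a genuine hole precisely where the paper does the most work.
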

\begin{remark}
The condition $\lim_{N\to\infty}
Nh_{\lambda_N}^2=+\infty$ or, equivalently, $H_{\lambda_N}^2 \ll N$ has a
transparent meaning: $N$ is the size of the system, whereas
$H_{\lambda_N}^2$ is  the correlation length of the random
walk with $V_{\lambda_N}$-area tilts.
A precise statement of this sort is formulated in Proposition~\ref{prop:gs-lN}
in Subsection~\ref{sub:AsympGS} below.
\end{remark}
In the case $V_\lambda(x) = \lambda x$, the typical height $\Hla =
\lambda^{-\frac13}$, $q(r) = r$ and the ground state $\varphi_0$ is the
rescaled Airy function:
\be{eq:AiryGen}
\varphi_0 = {\sf Ai}(\chi r-\omega_1 ) \quad{\rm and}\quad e_0 =
\frac{\omega_1}{\chi},
\ee
where $-\omega_1 = -2.33811\ldots$ is the first zero of ${\sf Ai}$ and
$\chi = \sqrt[3]{\frac{2}{\sigma^2}}$. Indeed, for $\varphi_0$ defined as
in~\eqref{eq:AiryGen},
\[
\frac{\dd^2}{\dd r^2}{\sf Ai} (r) = r{\sf Ai}(r )\ \Rightarrow\
\frac{\dd^2}{\dd r^2}\varphi_0(r) = \chi^2 (\chi r-\omega_1)\varphi_0(r) ,
\]
and one only needs to tune $\chi$ in order to adjust to
the expression~\eqref{eq:SL-operators} for $\sfL$.

\section{Proofs}
The proof is a combination of probabilistic estimates based on an
early paper~\cite{HV04} and rather straightforward functional analytic
considerations. We shall first express the partition functions
$Z^{\sfu,\sfv}_{N,+,\lambda}$ and, subsequently, the probability distributions
$\bbP^{\sfu,\sfv}_{N,+,\lambda}$ in terms of powers of a transfer operator
$\sfT_\lambda$. For each $\lambda$, the operator $\sfT_\lambda$ gives rise,
via Doob's transform, to a stationary positive-recurrent Markov chain
$X^\lambda$ with path measure $\bbP_\lambda$. In the sequel, we shall refer to
$X^\lambda$ as to the \emph{ground-state chain}.
Following~\eqref{eq:xN}, the ground-state chains are rescaled as $x_\lambda (t)
= h_\lambda X^\lambda_{\Hla^2 t}$.

\medskip
The proof of Theorem~\ref{thm:A} comprises three steps:
\newline
\step{1} As $\lambda\to 0$, the finite-dimensional distributions of the
rescaled ground-state chains $x_\lambda$ converge to the finite-dimensional
distributions of the Ferrari-Spohn diffusion $x_{\sigma,q}$.
This is the content of Corollary~\ref{thm:ConvFDD} in Subsection~\ref{sub:ConvEigen}.
\newline
\step{2} Under our assumptions on $\sfp$ and on the family of potentials
$V_\lambda$, the induced family of distributions $\bbP_\lambda$ is tight on
$\sfC [-T,T]$ for any $T <\infty$. This is the content
of Proposition~\ref{prop:tightness} in Subsection~\ref{sub:tightness}.
\newline
\step{3} Under the conditions of Theorem~\ref{thm:A}, the following happens:
For each $T\geq 0$, the variational distance between the induced 
distributions
on $\sfC[-T,T]$ of $\bbP^{\sfu,\sfv}_{N,+,\lambda_N}$ and of $\bbP_{\lambda_N}$
tends to zero as $N\to\infty$. This is the content
of Corollary~\ref{prop:gs-structure} in Subsection~\ref{sub:AsympGS}.

\paragraph{Remark on constants}
$c_1,\nu_1,\kappa_1,c_2,\nu_2,\kappa_2,\ldots $ denote positive constants
which may take different values in different Subsections, but are otherwise
universal, in the sense that the corresponding bounds hold uniformly in the
range of the relevant parameters.

\subsection{The operator $\sfT_\lambda$ and its Doob transform}
\label{sub:Tlambda}
In the sequel, we shall make a slight abuse of notation and identify the
spaces $\ell_p(\bbN)$ with sub-spaces of $\ell_p(\bbZ)$:
\[
\ell_p(\bbN)
=
\setof{\phi\in\ell_p(\bbZ)}{\phi(\sfx)=0 \text{ for } \sfx\leq 0} .
\]
For $\lambda >0$, consider the operators $\tilde \sfT_\lambda$ defined by
\be{eq:tTlambda}
\tilde\sfT_\lambda [\phi] (\sfx) =
\sum_{\sfy} p_{\sfy-\sfx} \mathrm{e}^{-\frac{1}{2}
(V_\lambda(\sfx)+V_\lambda(\sfy))}
\phi(\sfy) .
\ee
In terms of $\tilde \sfT_\lambda$, the partition function can be expressed as
\be{eq:ReprTlambda}
\mathrm{e}^{\frac{1}{2} (V_\lambda (\sfu)+ V_\lambda (\sfv))}
Z^{\sfu,\sfv}_{N,+,\lambda} = \tilde \sfT_\lambda^{2N}[\1_\sfv](\sfu) .
\ee
For each $\lambda >0$, the operator $\tilde\sfT_\lambda$ is positive on
$\ell_p(\bbN)$ and compact from $\ell_p(\bbN)$ to $\ell_{q}(\bbN)$ for every
$p,q\in [1,\infty]$ (we use $\ell_\infty(\bbN)$ for the Banach space of
functions $\phi\in\bbR^\bbN$ which tend to zero as $\sfx\to\infty$). Indeed,
$\{
\1_{\sfx}\}_{\sfx\in\bbN}$ is a basis of $\ell_p(\bbN)$. Since,
for $\sfx , \sfy >0$,
\[
\tilde\sfT_\lambda [ \1_\sfx ] (\sfy ) =
\mathrm{e}^{-\frac{1}{2} ( V_\lambda(\sfx)+V_\lambda(\sfy) )}
p_{\sfx-\sfy} ,
\]
it follows that
\be{eq:T-form}
\tilde\sfT_\lambda[\1_\sfx]
=
\mathrm{e}^{-\frac{1}{2} V_\lambda(\sfx)}
\sum_\sfy  p_{\sfx-\sfy}\mathrm{e}^{-\frac{1}{2}V_\lambda(\sfy)} \1_\sfy \
\Rightarrow \ \|\tilde\sfT_\lambda[\1_\sfx]\|_p \leq \mathrm{e}^{-\frac{1}{2} V_\lambda(\sfx)}
\sum_\sfy  p_{\sfx-\sfy}\mathrm{e}^{-\frac{1}{2}V_\lambda(\sfy)} .
\ee
Hence, by the second condition in~\eqref{eq:VL-1} and by the
assumption on exponential tails of $\sfp$ in~\eqref{eq:Assumption},
the closure $\{\tilde\sfT_\lambda[\1_\sfx]\}$ is compact in any $\ell_{q} (\bbN)$
whenever $\lambda >0$.

Since $\tilde\sfT_\lambda$ is a positive operator (and since, e.g.,
$\sum_n 2^{-n}\tilde \sfT_\lambda^n$ is strictly positive and still compact),
the Krein-Rutman Theorem \cite[Theorem~6.3]{KR50} applies, and 
$\tilde\sfT_\lambda$ possesses a strictly
positive leading eigenfunction $\phi_\lambda$ (the same for all $\ell_p (\bbN)$
spaces, by compact embedding) of algebraic multiplicity one.
Let $E_\lambda$ be the corresponding leading eigenvalue.
All the above reasoning applies to the adjoint operator $\tilde\sfT_\lambda^*$
with matrix entries
\be{eq:T-l-adjoint}
\tilde\sfT_\lambda^* (\sfx,\sfy) = \tilde\sfT_\lambda (\sfy,\sfx) =
\mathrm{e}^{-\frac{1}{2}(V_\lambda(\sfx) + V_\lambda(\sfy))}  p_{\sfx-\sfy}.
\ee
Let $\phi^*_\lambda$ be the Krein-Rutman eigenfunction (with the very same
leading eigenvalue $E_\lambda$) of $\tilde\sfT_\lambda^*$.
As Theorem~\ref{thm:A} indicates, the relevant spatial scale is given
by $h_\lambda = \Hla^{-1}$.
To fix notation, we normalize $\phi_\lambda$ and $\phi_\lambda^*$ as
in~\eqref{eq:Normalization} below,
that is $h_\lambda\sum_x \phi_\lambda (x )^2 = h_\lambda\sum_x \lb 
\phi_\lambda^* (x )\rb^2 =1$.

It will be convenient to work with the following normalized version
$\sfT_\lambda$ of $\tilde\sfT_\lambda$:
for $\sfx,\sfy\in\bbN$, let us introduce the kernel
\be{eq:Tlambda}
\sfT_\lambda (\sfx,\sfy) = \frac{1}{E_\lambda}\tilde \sfT_\lambda(\sfx,\sfy)
= \frac{1}{E_\lambda}
\mathrm{e}^{-\frac{1}{2}(V_\lambda(\sfx) + V_\lambda(\sfy))} p_{\sfy-\sfx } .
\ee
In this way, $\phi_\lambda$ and $\phi^*_\lambda$ are the principal positive
Krein-Rutman eigenfunctions of $\sfT_\lambda$ and $\sfT_\lambda^*$
with eigenvalue $1$.

\paragraph{Ground-state chains}
Define
\be{eq:pi-lambda}
\pi_\lambda (\sfx,\sfy) =
\frac{1}{\phi_\lambda(\sfx)} \sfT_\lambda(\sfx,\sfy) \phi_\lambda(\sfy)
\quad \text{and}\quad
\pi_\lambda^*(\sfx,\sfy) =
\frac{1}{\phi_\lambda^*(\sfx)} \sfT_\lambda^*(\sfx,\sfy) \phi_\lambda^*(\sfy) .
\ee
$\pi_\lambda$ and $\pi_\lambda^*$ are irreducible Markov kernels on $\bbN$.
The corresponding chains are positively recurrent and have the invariant
probability measure
$\mu_\lambda(\sfx) = {c_\lambda}\phi^*_\lambda(\sfx)\phi_\lambda(\sfx)$.
As we shall prove below in Theorem~\ref{thm:phihat},
$\lim_{\lambda\to 0}\frac{h_\lambda}{c_\lambda} = 1$.

Notice that
\be{eq:pi-reversed}
\mu_\lambda(\sfx) \pi_\lambda(\sfx,\sfy) =
\mu_\lambda(\sfy) \pi^*_\lambda(\sfy,\sfx) =
{c_\lambda} \phi^*_\lambda(\sfx) \sfT_\lambda(\sfx,\sfy) \phi_\lambda(\sfy) .
\ee
In the sequel, we shall denote by $\bbP_{\lambda}$ the stationary distribution
on $\bbN^\bbZ$ of the (direct) ground-state chain which corresponds to
$\pi_\lambda$.

\paragraph{Variational description of $E_\lambda$ and $\mu_\lambda$}
Let us formulate a Donsker-Varadhan type formula for the principal eigenvalue
$E_\lambda$ of $\tilde\sfT_\lambda$ or, equivalently, for the eigenvalue $1$ of
$\sfT_\lambda$.
\begin{theorem}
\label{thm:DV-lambda}
\be{eq:DV-mu-lambda}
1 = \sup_{\mu}\inf_{u\in\bbU_{+}} \sum_{\sfx\in\bbN}
\mu(\sfx)\frac{\sfT_\lambda[u]}{u}(\sfx) .
\ee
\end{theorem}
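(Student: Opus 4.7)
The plan is to prove the two inequalities $\sup_\mu\inf_u(\cdot)\leq 1$ and $\sup_\mu\inf_u(\cdot)\geq 1$ separately, exploiting the fact that $\sfT_\lambda$ has principal eigenvalue $1$ with strictly positive Krein-Rutman eigenfunction $\phi_\lambda$, and that Doob's transform~\eqref{eq:pi-lambda} converts $\sfT_\lambda$ into the Markov kernel $\pi_\lambda$ with invariant probability measure $\mu_\lambda$.

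For the easy upper bound, I would simply test with $u=\phi_\lambda$. Because $\sfT_\lambda\phi_\lambda=\phi_\lambda$, the ratio $\sfT_\lambda[\phi_\lambda](\sfx)/\phi_\lambda(\sfx)$ is identically $1$, so for every probability measure $\mu$ on $\bbN$ the inner sum equals $1$ and the infimum over $u\in\bbU_+$ is $\leq 1$. Taking the supremum over $\mu$ then yields $\sup_\mu\inf_u(\cdot)\leq 1$.

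For the nontrivial direction, I would take $\mu=\mu_\lambda$ and show that $\sum_\sfx\mu_\lambda(\sfx)\sfT_\lambda[u](\sfx)/u(\sfx)\geq 1$ for every $u\in\bbU_+$. Setting $v=u/\phi_\lambda$, the definition~\eqref{eq:pi-lambda} of $\pi_\lambda$ gives the identity
\[
\frac{\sfT_\lambda[u](\sfx)}{u(\sfx)} = \frac{\pi_\lambda[v](\sfx)}{v(\sfx)}.
\]
Applying Jensen's inequality to the concave function $\log$, first inside the kernel $\pi_\lambda$ at each fixed $\sfx$, and then outside under the probability $\mu_\lambda$, yields
\[
\log\sum_\sfx\mu_\lambda(\sfx)\frac{\pi_\lambda[v](\sfx)}{v(\sfx)} \geq \sum_\sfx\mu_\lambda(\sfx)\sum_\sfy\pi_\lambda(\sfx,\sfy)\bigl(\log v(\sfy)-\log v(\sfx)\bigr),
\]
and the right-hand side vanishes by stationarity of $\mu_\lambda$ under $\pi_\lambda$. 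Exponentiating gives the required lower bound $1$.

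The main technical obstacle is to justify the double Jensen step within the class $\bbU_+$ over which the infimum is taken: one needs $\log v$ to be $\mu_\lambda$-integrable and the relevant series to converge absolutely. These conditions are compatible with the natural interpretation of $\bbU_+$ as strictly positive functions bounded away from $0$ and $\infty$ on the support of $\mu_\lambda$; the general case, if needed, follows by a standard truncation of $v$ by $v\wedge n\vee n^{-1}$ and monotone/dominated convergence as $n\to\infty$, both sides behaving well because $\pi_\lambda[v]/v$ is homogeneous in $v$.
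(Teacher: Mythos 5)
Your proof is correct and follows essentially the same route as the paper: the upper bound by testing with $u=\phi_\lambda$, and the lower bound at $\mu=\mu_\lambda$ via the Doob transform identity $\sfT_\lambda[u]/u=\pi_\lambda[v]/v$, Jensen's inequality, and the invariance $\mu_\lambda\pi_\lambda=\mu_\lambda$ (the paper writes $v=\mathrm{e}^h$ and applies Jensen once to the joint measure $\mu_\lambda(\sfx)\pi_\lambda(\sfx,\sfy)$, which is the same computation). Your integrability worries are moot, since the definition~\eqref{eq:U-lambda} of $\bbU_+$ already forces $v=u/\phi_\lambda$ to be bounded away from $0$ and $\infty$, so $\log v$ is bounded and no truncation is needed.
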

Above, the first supremum is over probability measures on $\bbN$, and
\be{eq:U-lambda}
\bbU_{+} = \setof{u = v\phi_\lambda}{0 <\inf v \leq \sup v<\infty} .
\ee
\begin{remark}
Eventually, our proof of Theorem~\ref{thm:A} will not rely on the variational
formula~\eqref{eq:DV-mu-lambda}. Theorem~\ref{thm:DV-lambda} and its
consequence, Proposition~\ref{prop:F-lambda}, are formulated and proved in
their own right, but also because they elucidate the type of variational
convergence behind Theorem~\ref{thm:phihat} below.
\end{remark}

\begin{proof}
As before, set $h_\lambda = \Hla^{-1}$ and consider the following
functional:
\be{eq:F-lambda}
\calF_\lambda (\mu) =
\frac{1}{h_\lambda^{2}} \sup_{u\in\bbU_+} \sum_{\sfx\in\bbN} \mu(\sfx)
\frac{(1-\sfT_\lambda)u}{u}(\sfx) .
\ee
The coefficient $h_\lambda^{2}$ plays no role in the proof, it just fixes the
proper scaling.
The claim of Theorem~\ref{thm:DV-lambda} will follow once we show that
\be{eq:ToProve-lambda}
\calF_\lambda (\mu_\lambda) = 0 \quad\text{and}\quad \calF_\lambda (\mu)>0
\quad\text{whenever }\mu\neq\mu_\lambda .
\ee
Taking $u\equiv\phi_\lambda$, we readily infer that $\calF_\lambda$ is
non-negative.
In order to check the first statement in~\eqref{eq:ToProve-lambda}, we need to
verify that
\be{eq:claim1-lambda}
\sum_{\sfx\in\bbN} \mu_\lambda(\sfx) \frac{(1-\sfT_\lambda)u}{u}(\sfx) \leq 0 ,
\ee
whenever $u = v \phi_\lambda$ and $v\not\equiv 1$.
Let us write $v$ as $v = \mathrm{e}^{h}$. Then, using the
notation~\eqref{eq:pi-lambda},
\[
\sum_{\sfx\in\bbN} \mu_\lambda(\sfx) \frac{(1-\sfT_\lambda)u}{u}(\sfx)  =
\sum_{\sfx\in\bbN} \bigl(1 - \mathrm{e}^{-h}\pi_\lambda \mathrm{e}^h \bigr)
\mu_\lambda(\sfx)
\leq
\bigl( 1 - \mathrm{e}^{-\langle \mu_\lambda , h\rangle +
\langle \mu_\lambda ,\pi_\lambda  h\rangle } \bigr) = 0,
\]
where we used again Jensen's inequality and the invariance of 
$\mu_\lambda$:
$\mu_\lambda\pi_\lambda = \mu_\lambda$.

If $\mu = g^2\mu_\lambda$ and $g$ is bounded away from $0$ and $\infty$, then,
taking $u = g\phi_\lambda$,
\be{eq:g-comp-lambda}
\sum_{\sfx\in\bbN} \mu(\sfx) \frac{(1-\sfT_\lambda)u}{u}(\sfx) =
\frac{1}{2}\sum_{\sfx,\sfy} \hat\pi_\lambda(\sfx,\sfy) (g(\sfx)-g(\sfy))^2
\mu_\lambda(\sfx) ,
\ee
where $\hat\pi_\lambda$ is the symmetrized kernel,
\be{eq:hat-pi-lambda}
\hat\pi_\lambda(\sfx,\sfy) = \frac{1}{2}(\pi_\lambda(\sfx,\sfy) +
\pi^*_\lambda(\sfx,\sfy)) .
\ee
By \eqref{eq:pi-reversed},
\be{eq:g-bound}
\calF_\lambda (g^2\mu_\lambda) \geq
\frac{{c_\lambda}}{4} \sum_{\sfx,\sfy}
\Bigl( \frac{g(\sfx)-g(\sfy)}{h_\lambda}\Bigr)^2
(\phi^*_\lambda(\sfx)\phi_\lambda(\sfy) +
\phi^*_\lambda(\sfy)\phi_\lambda(\sfx)) .
\ee
We claim that~\eqref{eq:g-bound} still holds when $g$ is not bounded away from
$0$ and $\infty$.
This will follow if we show that there exists a sequence $u_\ell\in\bbU_+$
such that
\begin{align}
\limsup_{\ell\to \infty}
\sum_{\sfx\in\bbN} \mu_\lambda(\sfx) g^2(\sfx)
\frac{\sfT_\lambda [u_\ell\phi_\lambda](\sfx)}{u_\ell(\sfx)\phi_\lambda(\sfx)}
&\leq
\sum_{\sfx\in\bbN} \mu_\lambda(\sfx) g^2(\sfx)
\frac{\sfT_\lambda [g\phi_\lambda](\sfx)}{g(\sfx)\phi_\lambda(\sfx)}\notag\\
&=
c_\lambda \sum_{\sfx\in\bbN} \phi^*_\lambda(\sfx) g(\sfx)
\sfT_\lambda[g \phi_\lambda](\sfx).
\label{eq:u-g-lim}
\end{align}
Assume that $g$ is not bounded away from zero and consider $g_n = g\vee
\frac{1}{n}$. Then,
\[
\sum_{\sfx\in\bbN} \mu_\lambda(\sfx) g^2(\sfx)
\frac{\sfT_\lambda[g_n\phi_\lambda](\sfx)}{g_n(\sfx)\phi_\lambda(\sfx)}
\leq
{c_\lambda} \sum_{\sfx\in\bbN} \phi^*(\sfx)g(\sfx)
\sfT_\lambda[g_n\phi_\lambda](\sfx) .
\]
By a monotone convergence argument, the right-hand side above converges (as
$n\to\infty$) to ${c_\lambda} \sum_{\sfx\in\bbN} \phi^*(\sfx) g(\sfx)
\sfT_\lambda[g\phi_\lambda](\sfx)$.
If, in addition, $g$ is not bounded above, then consider
$g_{n,M} = g_n \wedge M\in\bbU_+$. Define $A_M = \setof{\sfx}{g(\sfx)>M}$.
Then,
\begin{equation*}
\sum_{\sfx\in\bbN} \mu_\lambda(\sfx) g^2(\sfx)
\frac{\sfT_\lambda[g_{n,M}\phi_\lambda](\sfx)}{g_{n,M}(\sfx)\phi_\lambda(\sfx)}
\leq
\sum_{\sfx\not\in A_M} \mu_\lambda(\sfx) g^2(\sfx)
\frac{\sfT_\lambda[g_n\phi_\lambda](\sfx)}{g_n(\sfx)\phi_\lambda(\sfx)}
+ \sum_{\sfx\in A_M} \mu_\lambda(\sfx) g^2(\sfx) .
\end{equation*}
Since $\lim_{M\to\infty} \sum_{\sfx\in A_M} \mu_\lambda(\sfx) g^2(\sfx) = 0$,
the approximation procedure goes through as claimed in~\eqref{eq:u-g-lim}.
\qed
\end{proof}
As a byproduct, we obtain the following
\begin{proposition}
\label{prop:F-lambda}
The functional $\calF_\lambda$ is convex and lower-semicontinuous. It has a
unique minimum:
\be{eq:F-lambda-min}
\calF_\lambda (\mu ) = 0\  \Leftrightarrow\ \mu = \mu_\lambda .
\ee
Furthermore, $\mu_\lambda$ is a quadratic minimum in the sense
of~\eqref{eq:g-bound}.
\end{proposition}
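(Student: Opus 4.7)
The plan is to combine three observations, two of which directly leverage the work already done in the proof of Theorem~\ref{thm:DV-lambda}. The quadratic lower bound \eqref{eq:g-bound} is itself the ``quadratic minimum'' part of the proposition and was established there, together with the approximation procedure that extends it from bounded $g$ to arbitrary nonnegative $g$ with $\sum_\sfx g^2(\sfx)\mu_\lambda(\sfx)=1$. Moreover, the relation $\calF_\lambda(\mu_\lambda)=0$ was verified via Jensen's inequality in that same proof. Hence only convexity and lower semicontinuity, and uniqueness of the zero, need to be addressed.

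For convexity and lower semicontinuity, I would fix $u=v\phi_\lambda\in\bbU_+$ and, using $\sfT_\lambda\phi_\lambda=\phi_\lambda$, rewrite
\[
\frac{(1-\sfT_\lambda)u(\sfx)}{u(\sfx)} \;=\; 1-\frac{\sfT_\lambda(v\phi_\lambda)(\sfx)}{v(\sfx)\phi_\lambda(\sfx)}.
\]
Since $0<\inf v\leq\sup v<\infty$, this function is bounded in $\sfx$, so the map
\[
F_u:\mu \mapsto \frac{1}{h_\lambda^2}\sum_{\sfx\in\bbN}\mu(\sfx)\frac{(1-\sfT_\lambda)u(\sfx)}{u(\sfx)}
\]
is a continuous affine functional of the probability measure $\mu$ (for instance, with respect to pointwise convergence of densities on $\bbN$). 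Being a supremum of such functionals, $\calF_\lambda=\sup_{u\in\bbU_+}F_u$ is automatically convex and lower semicontinuous.

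For uniqueness, suppose $\calF_\lambda(\mu)=0$. Because $\phi_\lambda$ and $\phi_\lambda^*$ are pointwise strictly positive, the invariant measure $\mu_\lambda=c_\lambda\phi_\lambda\phi_\lambda^*$ is everywhere positive, so $\mu\ll\mu_\lambda$; write $\mu=g^2\mu_\lambda$ with $g\geq 0$ and $\sum_\sfx g^2(\sfx)\mu_\lambda(\sfx)=1$. The bound \eqref{eq:g-bound}, extended to this general $g$ by the truncation argument $g_{n,M}=(g\vee\tfrac1n)\wedge M$ already performed in the proof of Theorem~\ref{thm:DV-lambda}, then forces every summand on its right-hand side to vanish. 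The strict positivity of the coefficient $\phi^*_\lambda(\sfx)\phi_\lambda(\sfy)+\phi^*_\lambda(\sfy)\phi_\lambda(\sfx)$, combined where needed with the irreducibility of $\sfp$, yields $g(\sfx)=g(\sfy)$ for all $\sfx,\sfy\in\bbN$. The normalization then gives $g\equiv 1$, i.e.\ $\mu=\mu_\lambda$.

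The only genuine obstacle --- extending \eqref{eq:g-bound} to unbounded $g$ --- has already been handled in the previous proof via the truncation and a monotone convergence argument, so the work that remains here is essentially bookkeeping.
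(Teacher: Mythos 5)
Your proposal is correct and follows exactly the route the paper intends: the paper states Proposition~\ref{prop:F-lambda} as a byproduct of the proof of Theorem~\ref{thm:DV-lambda}, and your argument simply assembles the ingredients already established there (Jensen's inequality for $\calF_\lambda(\mu_\lambda)=0$, the quadratic bound~\eqref{eq:g-bound} together with its truncation-based extension to general $g$, and the standard fact that a supremum of continuous affine functionals is convex and lower-semicontinuous), with irreducibility of $\sfp$ closing the uniqueness step. No gaps.
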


\subsection{Convergence of eigenfunctions, invariant measures and semigroups}
\label{sub:ConvEigen}
It will be convenient to think about $\sfT_\lambda$ and $\pi_\lambda$ as
acting on the rescaled spaces $\ell_2(\bbN_\lambda)$, where
\be{eq:l2-L}
\bbN_\lambda = h_\lambda \bbN\quad \text{and the scalar product is}\quad
\langle u, v\rangle_{2,\lambda}
=
h_\lambda \sum_{\sfr\in\bbN_\lambda } u(\sfr) v(\sfr) .
\ee
Accordingly, we rescale $\phi_\lambda$ and $\phi^*_\lambda$ in such a way that
\be{eq:Normalization}
\|\phi_\lambda\|_{2,\lambda} = \|\phi_\lambda^*\|_{2,\lambda}=1 .
\ee
We use the same notation $\mu_\lambda = c_\lambda\phi_\lambda \phi_\lambda^*$
for the rescaled probability measure on $\bbN_\lambda$.
In other words, the constants $c_\lambda$ are defined via
\be{eq:c-lambda}
\frac{1}{c_\lambda}
=
\sum_{\sfr\in\bbN_\lambda}\phi_\lambda(\sfr)\phi_\lambda^*(\sfr)
\quad\text{or}\quad
\frac{h_\lambda}{c_\lambda}
=
\langle \phi_\lambda,\phi_\lambda^* \rangle_{2,\lambda} ,
\ee
where $\phi_\lambda$ and $\phi_\lambda^*$ are the principal eigenfunctions
satisfying the normalization condition~\eqref{eq:Normalization}.
\begin{remark}
\label{rem:NZ}
As in the case of $\ell_p(\bbN)$, with a slight abuse of notation,
we shall identify $\ell_2(\bbN_\lambda)$ with a closed linear sub-space of
$\ell_2(\bbZ_\lambda)$, where $\bbZ_\lambda = h_\lambda\bbZ$. Namely,
\be{eq:NZ}
\ell_2(\bbN_\lambda) = \setof{u\in\ell_2(\bbZ_\lambda)}{u(r)=0\text{ for all
$r\leq 0$}} .
\ee
In this way, if $k_\lambda$ is a kernel on $\bbZ_\lambda$, then the operators
\[
u(\cdot)\mapsto \1_{\cdot\in\bbN_\lambda}\sum_{\sfs\in\bbZ_\lambda} k_\lambda
(\sfs - \cdot)u(\sfs)\quad \text{and}\quad
u(\cdot)\mapsto \1_{\cdot\in\bbN_\lambda}
\sum_{\sfs\in\bbZ_\lambda} k_\lambda(\cdot -\sfs) (u(\sfs) - u (\cdot)) ,
\]
can be considered as operators on $\ell_2(\bbN_\lambda)$. Accordingly,
\[
\sum_{\sfs,\sfr\in\bbZ_\lambda} k_\lambda(\sfs-\sfr) (u(\sfs) - u(\sfr)) u(\sfr)
\]
is a quadratic form on $\ell_2(\bbN_\lambda)$.
\end{remark}

In the sequel, we shall write $p_\lambda(\sfr) = p_{\Hla\sfr}$ for the
rescaled random walk kernel on $\bbZ_\lambda$.

\paragraph{Convergence of Hilbert spaces}
Let us fix a map $\rho_\lambda:\bbL_2 (\bbR_+)\to \ell_2(\bbN_\lambda)$ with
$\|\rho_\lambda\|\leq 1$. The specific choice is not really important; for
instance, we may define
\be{eq:rho-lambda}
\rho_\lambda u(\sfr) = \tfrac1{h_\lambda}\int_{\sfr-h_\lambda}^\sfr u (s)\dd s .
\ee
\begin{definition}
\label{def:ConvHS}
Let us say that a sequence $u_\lambda \in \ell_2(\bbN_\lambda)$ converges to
$u\in\bbL_2(\bbR_+)$, $u = \slim u_\lambda$,
if
\be{eq:L-conv}
\lim_{\lambda\to 0} \| u_\lambda - \rho_\lambda u\|_{2,\lambda} = 0 .
\ee
\end{definition}
We shall write $\lim_{\lambda\to 0}$ instead of $\slim_{\lambda\to 0}$ whenever
no ambiguity arises.
\paragraph{Compactness of eigenfunctions}
The following two probabilistic estimates will be proved in
Section~\ref{sec:tools}.
\begin{lemma}
\label{lem:elambda}
Define $\sfe_\lambda = - \Hla^2\log E_\lambda$. Then,
\be{eq:elambda}
0 < \liminf_{\lambda\to 0} \sfe_\lambda
\leq \limsup_{\lambda\to 0} \sfe_\lambda < \infty .
\ee
\end{lemma}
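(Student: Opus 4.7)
The plan is to identify $\log E_\lambda$ with the exponential decay rate of the partition function $Z^{\sfu,\sfv}_{N,+,\lambda}$ and then supply matching probabilistic two-sided bounds on the latter. By Krein--Rutman, $E_\lambda$ is a simple isolated eigenvalue of $\tilde\sfT_\lambda$ with strictly positive eigenfunctions $\phi_\lambda,\phi_\lambda^*$, so the standard spectral decomposition yields
\begin{equation*}
\tilde\sfT_\lambda^{2N}(\sfu,\sfv)\;=\;c_\lambda\,\phi_\lambda(\sfu)\phi_\lambda^*(\sfv)\,E_\lambda^{2N}\bigl(1+\smo{1}\bigr)\quad\text{as }N\to\infty,
\end{equation*}
which, combined with~\eqref{eq:ReprTlambda}, gives for any fixed $\sfu,\sfv\geq 1$
\begin{equation*}
\log E_\lambda\;=\;\lim_{N\to\infty}\frac{1}{2N}\log Z^{\sfu,\sfv}_{N,+,\lambda}.
\end{equation*}
Thus Lemma~\ref{lem:elambda} reduces to establishing two-sided bounds $-\frac{1}{2N}\log Z^{\sfu,\sfv}_{N,+,\lambda}\asymp h_\lambda^2$, with constants uniform in small $\lambda$, for boundary heights on the scale $H_\lambda$.

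For the upper bound on $\sfe_\lambda$, I would fix $\sfu=\sfv=\lceil H_\lambda\rceil$ and lower-bound $Z^{\sfu,\sfu}_{N,+,\lambda}$ by restricting the sum in~\eqref{eq:pf} to bridges confined to a slab $[\alpha H_\lambda,\beta H_\lambda]$ with $0<\alpha<1<\beta$ fixed. By assumption~\eqref{eq:HL-2}, $V_\lambda(X_i)\leq Ch_\lambda^2$ throughout this slab for all small $\lambda$, so the area-tilt factor is at least $e^{-2NCh_\lambda^2}$. The free random-walk bridge stays in a slab of width $\asymp H_\lambda$ for $2N$ steps with probability at least $c_1 e^{-c_2N/H_\lambda^2}$, by a classical box-confinement estimate (principal Dirichlet eigenvalue of the discretized Laplacian on an interval of length $\asymp H_\lambda$, with jumps escaping the slab controlled using the exponential-moments assumption~\eqref{eq:Assumption}). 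Multiplying the two factors yields $\sfe_\lambda\leq C'$.

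For the lower bound on $\sfe_\lambda$, I would invoke the matching uniform upper bound $Z^{\sfu,\sfv}_{N,+,\lambda}\leq C(\sfu,\sfv)\,e^{-cN/H_\lambda^2}$ provided by the probabilistic estimates of Section~\ref{sec:tools}, which build on the methods of~\cite{HV04}. The heuristic mechanism uses assumption~\eqref{eq:HL-3} to write $\sum V_\lambda(X_i)\geq h_\lambda^2\sum q_0(h_\lambda X_i)$: trajectories at heights $\asymp H_\lambda$ pay the area tilt, while trajectories hugging the wall $\{0\}$ pay an entropic cost, and a block decomposition of $\{-N,\dots,N\}$ into segments of length $\asymp H_\lambda^2$ produces a per-block multiplicative contraction bounded away from $1$ uniformly in the boundary heights and in $\lambda$. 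The main obstacle lies precisely here: the lower-bound step and the spectral identification are essentially an interval-eigenvalue computation and an application of Perron--Frobenius, whereas the matching upper bound requires the delicate balance between entropic repulsion from the wall and the $V_\lambda$-area tilt that constitutes the genuine probabilistic content of~\cite{HV04}.
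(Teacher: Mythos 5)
Your reduction of the lemma to two-sided exponential bounds on the partition function is correct and essentially matches the paper, which works instead with the open-ended $\hat Z^{\sfu,\varnothing}_{N,+,\lambda}$ and the identity $\log E_\lambda = \lim_N N^{-1}\log\hat Z^{\sfu,\varnothing}_{N,+,\lambda}$, normalised by the untilted $\hat Z^{\sfu,\varnothing}_{N,+,0}$. Your upper bound on $\sfe_\lambda$ by confining the bridge to a slab $[\alpha\Hla,\beta\Hla]$ away from the wall is a valid variant of the paper's construction (which brings the path down to $[0,\Hla]$ in an initial descent segment and then confines it there); both give $\sfe_\lambda\leq\overline{\sfe}<\infty$ by the same balance of an $e^{-CN h_\lambda^2}$ tilt cost against a $c_1 e^{-c_2 N h_\lambda^2}$ tube-confinement probability.

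The lower bound $\liminf_{\lambda\to 0}\sfe_\lambda>0$, however, is not proved by your proposal — you describe it as "the main obstacle" and defer to "the probabilistic estimates of Section~\ref{sec:tools}", but Section~\ref{sec:tools} \emph{is} where the proof of this lemma lives, so the reference is circular. The heuristic you offer — a per-block multiplicative contraction uniform in the block endpoints — is not what the paper does and is not straightforward to make rigorous directly, because the contraction factor for a block whose two endpoints sit arbitrarily high is driven by the tilt rather than by entropy, and uniformity over all boundary heights requires care. The paper's actual mechanism is a large-deviations count: partition $\{1,\dots,N\}$ into $N_\lambda\asymp N\Hla^{-2}$ blocks $b_k$ of length $\epsilon\Hla^2$, introduce events $\calB_k$ (the walk exceeds $2\sqrt\epsilon\Hla$ in both alternate neighbouring blocks) and $\calC_k$ ($\calB_k$ holds and the walk dips below $\sqrt\epsilon\Hla$ in the middle block), and show via the CLT and conditional independence across alternating blocks that $G-G'\geq\kappa_7 N_\lambda$ with probability $1-e^{-\kappa_6 N\Hla^{-2}}$ under the untilted positive walk. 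Each of these $G-G'$ blocks stays entirely above $\sqrt\epsilon\Hla$, contributing $\geq\epsilon\,q_0(\sqrt\epsilon)$ to $\sum_i V_\lambda(X_i)$ via assumption~\eqref{eq:HL-3}; combined with the small probability of the complementary event, this yields $\hat Z^{\sfu,\varnothing}_{N,+,\lambda}\leq e^{-\underline\sfe N\Hla^{-2}}\hat Z^{\sfu,\varnothing}_{N,+,0}$ uniformly. This counting argument, not a per-block expectation bound, is the missing ingredient in your proposal.
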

As we already noted, it follows from the compactness of $\sfT_\lambda$ that the
eigenfunctions $\phi_\lambda$ and $\phi_\lambda^*$ belong to
$\ell_p(\bbN_\lambda)$ for any $p\geq 1$ and $\lambda>0$.
The second probabilistic input is a tail estimate on $\phi_\lambda$ and
$\phi_\lambda^*$.
\begin{lemma}
\label{lem:phi-l-K}
There exist positive constants $\nu_1$ and $\nu_2$  such that
\be{eq:tail-phi-l}
h_\lambda \sum_{\sfr\in\bbN_\lambda} 
\phi_\lambda(\sfr)\1_{\{\sfr>K\}}
\leq \nu_1e^{-\nu_2 KH_\lambda
(\sqrt{V_\lambda (H_\lambda K )}\wedge 1)}\leq
\nu_1 \mathrm{e}^{-\nu_2 K ( \sqrt{q_0(K)} \wedge \Hla ) } ,
\ee
uniformly in $K>0$ and $\lambda\leq\lambda_0$.
The same holds for $\phi_\lambda^*$.
In particular, both sequences $\phi_\lambda$ and $\phi_\lambda^*$ are bounded
in $\ell_1(\bbN_\lambda)$:
\be{eq:ell-1-bound}
\limsup_{\lambda\to 0}h_\lambda \sum_{\sfr\in\bbN_\lambda} \phi_\lambda(\sfr)
<\infty
\quad\text{and}\quad
\limsup_{\lambda\to 0}h_\lambda \sum_{\sfr\in\bbN_\lambda} \phi_\lambda^*(\sfr)
<\infty .
\ee
\end{lemma}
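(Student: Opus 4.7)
The plan is to establish a pointwise Agmon-type upper bound
$\phi_\lambda(\sfx)\leq\nu_1'\exp(-\nu_2'\sfx(\sqrt{V_\lambda(\sfx)}\wedge 1))$
and then sum it. The workhorse is the probabilistic representation of the iterated eigenequation $\sfT_\lambda^n\phi_\lambda=\phi_\lambda$:
\begin{equation*}
\phi_\lambda(\sfx)\,E_\lambda^n
=\mathrm{e}^{-V_\lambda(\sfx)/2}\,\bbE^\sfx\bigl[\phi_\lambda(X_n)\,
\mathrm{e}^{-\sum_{i=1}^{n-1}V_\lambda(X_i)-V_\lambda(X_n)/2}\,\1_{X_1,\ldots,X_n>0}\bigr],
\end{equation*}
where $\bbE^\sfx$ is expectation for the unconditioned random walk of kernel $\sfp$ starting at $\sfx$; by Lemma~\ref{lem:elambda}, $E_\lambda^{-n}=\mathrm{e}^{nh_\lambda^{2}\sfe_\lambda}=O(1)$ so long as $n=O(h_\lambda^{-2})$.

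First I would apply this identity with $n=1$, combined with Cauchy--Schwarz, the exponential moments~\eqref{eq:Assumption} of $\sfp$, and the normalization $\|\phi_\lambda\|_{\ell_2(\bbN)}^2=h_\lambda^{-1}$ inherited from~\eqref{eq:Normalization}, to derive the preliminary bound $\phi_\lambda(\sfx)\leq C_1 h_\lambda^{-1/2}\mathrm{e}^{-V_\lambda(\sfx)/2}$. A maximum-principle observation --- at any maximizer $\sfx_*$ of $\phi_\lambda$, the eigenequation forces $V_\lambda(\sfx_*)\leq 2h_\lambda^2\sfe_\lambda=O(h_\lambda^2)$, so $\sfx_*=O(\Hla)$ by~\eqref{eq:HL-2} --- then upgrades this to a uniform bound $\|\phi_\lambda\|_\infty\leq C_1'$.

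For the sharper bound at $\sfx=\Hla K$ large, I would apply the $n$-step identity and split the expectation on the event $A_n=\{\min_{1\leq i\leq n}X_i\geq\sfx/2\}$ or its complement. On $A_n$, monotonicity of $V_\lambda$ gives $\mathrm{e}^{-\sum V_\lambda(X_i)}\leq \mathrm{e}^{-nV_\lambda(\sfx/2)}$; on $A_n^c$, drop the area factor and bound the descent probability by a Cram\'er estimate $\bbP^\sfx(\min X_i<\sfx/2)\leq \mathrm{e}^{-nI(\sfx/(2n))}$, with $I(v)\asymp v^2/(2\sigma^2)$ for small $|v|$ and $I(v)\gtrsim|v|$ at infinity (from the exponential moments of $\sfp$). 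Inserting $\phi_\lambda(X_n)\leq C_1'$ and optimizing $n\asymp\sfx/\sqrt{V_\lambda(\sfx)}$ (or $n=1$ if this is less than $1$) balances the two exponents $nV_\lambda(\sfx/2)$ and $\sfx^2/(n\sigma^2)$ and produces
\begin{equation*}
\phi_\lambda(\sfx)\leq \nu_1'\,\mathrm{e}^{-\nu_2'\sfx(\sqrt{V_\lambda(\sfx)}\wedge 1)}.
\end{equation*}
Summing over $\sfx>\Hla K$ --- a geometric tail in $\sfx$ by monotonicity of $\sfx\mapsto\sfx(\sqrt{V_\lambda(\sfx)}\wedge 1)$, hence dominated by its boundary value --- yields the first inequality of~\eqref{eq:tail-phi-l}; the second follows from $\Hla^2V_\lambda(\Hla K)\geq q_0(K)$ in~\eqref{eq:HL-3}. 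The argument for $\phi_\lambda^*$ is identical, by the symmetry of~\eqref{eq:T-l-adjoint} under exchange of arguments, and~\eqref{eq:ell-1-bound} is the specialization $K\to 0$.

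The main obstacle is the uniform $L^\infty$-bound on $\phi_\lambda$: naive Cauchy--Schwarz alone gives only $\|\phi_\lambda\|_\infty=O(h_\lambda^{-1/2})$, which upon summation produces an unwanted factor $\Hla^{1/2}$, and the maximum-principle localization of the maximum into the bulk $\sfx=O(\Hla)$ is what rescues the argument. Beyond this, one has to juggle positivity and area tilt inside $\bbE^\sfx$ --- discarding the area factor on $A_n^c$ to apply Cram\'er for the unconditioned walk, discarding positivity on $A_n$, and retaining along the stay-high event the uniform lower bound $V_\lambda(X_i)\geq V_\lambda(\sfx/2)$ --- and verify that the optimized $n$ lies in the admissible range $1\leq n=O(h_\lambda^{-2})$, which is guaranteed by $\Hla\to\infty$ as $\lambda\to 0$.
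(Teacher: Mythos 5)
Your strategy --- a pointwise Agmon-type bound on $\phi_\lambda$ followed by summation --- is a genuinely different route from the paper's, which never seeks an $L^\infty$ bound at all: the paper works directly in $\ell_1$, applying a last-exit decomposition from $\{1,\ldots,M\}$ to $\sfT_\lambda^N g_{2M}$, passing to the limit $N\to\infty$ to obtain the exact identity~\eqref{eq:leq-geq}, and then absorbing the tail sum into the full $\ell_1$-mass. Your route is reasonable, but the central step of your plan has a gap.

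The localization of the maximizer $\sfx_*$ into the bulk is fine (at a maximizer the eigenequation gives $E_\lambda\leq \mathrm{e}^{-V_\lambda(\sfx_*)/2}$, whence $q_0(h_\lambda\sfx_*)\leq 2\sfe_\lambda=O(1)$ and $\sfx_*=O(\Hla)$), but it does \emph{not} upgrade the preliminary bound $\phi_\lambda\leq C_1 h_\lambda^{-1/2}\mathrm{e}^{-V_\lambda/2}$ to $\|\phi_\lambda\|_\infty\leq C_1'$. At $\sfx_*$ one has $V_\lambda(\sfx_*)=O(h_\lambda^2)$, so the exponential factor in the preliminary bound is $\approx 1$ and the estimate at the maximizer is still $O(h_\lambda^{-1/2})$. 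Knowing \emph{where} the maximum lies gives no control on its \emph{size} absent a regularity estimate ruling out a sharp spike --- which the $n=1$ identity cannot provide. What actually yields the uniform bound is iterating the identity a diffusive number $n\asymp\Hla^2$ of steps: discarding positivity and the area tilt, $\phi_\lambda(\sfx)\leq E_\lambda^{-n}\,\bbE^\sfx[\phi_\lambda(X_n)]\leq E_\lambda^{-n}\|p^{(n)}_{\sfx\cdot}\|_{\ell_2}\|\phi_\lambda\|_{\ell_2}$; the local CLT gives $\|p^{(n)}_{\sfx\cdot}\|_{\ell_2}^2\leq\max_\sfy p^{(n)}_{\sfx\sfy}\leq C n^{-1/2}$, and with $n=\Hla^2$ and $\|\phi_\lambda\|_{\ell_2}=\Hla^{1/2}$ one gets $\phi_\lambda(\sfx)\leq \mathrm{e}^{\sfe_\lambda}\cdot C\,\Hla^{-1/2}\Hla^{1/2}=O(1)$, uniformly in $\sfx$ --- no maximum-principle observation needed. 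The same mechanism should replace the bare $\|\phi_\lambda\|_\infty$ in both terms of your split. A second, smaller issue: \eqref{eq:ell-1-bound} is not the "specialization $K\to 0$" of \eqref{eq:tail-phi-l}; as $K\to 0$ your optimal $n\asymp\sfx/\sqrt{V_\lambda(\sfx)}$ leaves the window where $E_\lambda^{-n}=O(1)$, and the resulting inequality degenerates into the very $\ell_1$ bound you want to prove. Instead fix a large $K$, use \eqref{eq:tail-phi-l} for the tail $\sfr>K$, and bound $h_\lambda\sum_{\sfr\leq K}\phi_\lambda\leq\sqrt{K}\,\|\phi_\lambda\|_{2,\lambda}=\sqrt{K}$ by Cauchy--Schwarz, exactly as the paper does.
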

Using the two lemmas above and Rellich's theorem (see, e.g., 
\cite[Chapter~6]{Adams}) on compact embeddings of the Sobolev spaces 
$\bbH^1[a,b]$ into $\bbL_2[a,b]$ for finite intervals
$[a,b]$, we shall prove the following
\begin{proposition}
\label{prop:Comp}
Under our assumptions on $V_\lambda$ and $\sfp$, the sequence
$\phi_\lambda$ is sequentially compact (in the sense of {\sf s}-convergence
as described above in~\eqref{eq:L-conv}).
\end{proposition}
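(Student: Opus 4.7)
The plan is to combine three ingredients: an energy identity obtained by testing $\phi_\lambda$ against the symmetrized eigenvalue equation, which yields simultaneously a Dirichlet-form bound and a potential-energy bound; Rellich's compact embedding on bounded intervals; and the tail estimate of Lemma~\ref{lem:phi-l-K}. Since $\|\phi_\lambda\|_{2,\lambda}=1$, the family is already $\ell_2(\bbN_\lambda)$-bounded, so what remains is equicontinuity on compact sets together with uniform $\ell_2$-tightness.

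First I would derive the energy identity. From $\tilde\sfT_\lambda\phi_\lambda=E_\lambda\phi_\lambda$, taking inner products gives $\langle\phi_\lambda,(\sfI-\tilde\sfS_\lambda)\phi_\lambda\rangle_{2,\lambda}=1-E_\lambda$, where $\tilde\sfS_\lambda=\tfrac12(\tilde\sfT_\lambda+\tilde\sfT_\lambda^*)$ is the self-adjoint symmetrization, with symmetric kernel
\[
\tilde\sfS_\lambda(\sfr,\sfs)=\tfrac12\bigl(p_\lambda(\sfs-\sfr)+p_\lambda(\sfr-\sfs)\bigr)\mathrm{e}^{-\frac12(V_\lambda(H_\lambda\sfr)+V_\lambda(H_\lambda\sfs))}.
\]
Using $\phi(\sfr)\phi(\sfs)=\tfrac12(\phi(\sfr)^2+\phi(\sfs)^2-(\phi(\sfr)-\phi(\sfs))^2)$ and the symmetry of $\tilde\sfS_\lambda$, the identity rearranges into
\[
h_\lambda\sum_\sfr\phi_\lambda(\sfr)^2\bigl(1-\tilde b_\lambda(\sfr)\bigr)+\frac{h_\lambda}{2}\sum_{\sfr,\sfs}\tilde\sfS_\lambda(\sfr,\sfs)\bigl(\phi_\lambda(\sfr)-\phi_\lambda(\sfs)\bigr)^2=1-E_\lambda,
\]
with $\tilde b_\lambda(\sfr)=\sum_\sfs\tilde\sfS_\lambda(\sfr,\sfs)\leq 1$. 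By Lemma~\ref{lem:elambda}, $1-E_\lambda=\sfe_\lambda h_\lambda^2+o(h_\lambda^2)=O(h_\lambda^2)$, and both summands on the left are non-negative.

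From here I read off two consequences. For the first summand, \eqref{eq:HL-3} gives $1-\tilde b_\lambda(\sfr)\geq 1-\mathrm{e}^{-h_\lambda^2 q_0(\sfr)/2}\geq c\min(h_\lambda^2 q_0(\sfr),1)$, hence
\[
h_\lambda\sum_\sfr\phi_\lambda(\sfr)^2\min\bigl(q_0(\sfr),H_\lambda^2\bigr)\leq C
\]
uniformly in $\lambda\leq\lambda_0$. Since $q_0(r)\to\infty$, for any fixed $K$ and sufficiently small $\lambda$ one has $q_0(K)\leq H_\lambda^2$, and a Markov-type inequality yields the uniform tightness bound $h_\lambda\sum_{\sfr>K}\phi_\lambda(\sfr)^2\leq C/q_0(K)\to 0$ as $K\to\infty$. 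For the second summand, restricting to compact $[0,K]$ (on which the exponential factor in $\tilde\sfS_\lambda$ is bounded below uniformly in $\lambda$) gives
\[
\sum_{\sfr,\sfs\in[0,K]}\bigl(p_\lambda(\sfs-\sfr)+p_\lambda(\sfr-\sfs)\bigr)\bigl(\phi_\lambda(\sfr)-\phi_\lambda(\sfs)\bigr)^2\leq C_K h_\lambda.
\]
Irreducibility of $\sfp$ produces a fixed integer combination of its support equal to $\pm 1$; telescoping along such a combination and using the exponential-moment assumption to keep constants controlled, one converts the above into the nearest-neighbor estimate $\sum_\sfr(\phi_\lambda(\sfr+h_\lambda)-\phi_\lambda(\sfr))^2\leq C_K h_\lambda$, i.e., $\|\bar\phi_\lambda'\|_{\bbL_2[0,K]}^2\leq C_K$ uniformly in $\lambda$, where $\bar\phi_\lambda$ is the piecewise-linear interpolant of $\phi_\lambda$.

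Combining this $\bbH^1[0,K]$-bound with $\|\bar\phi_\lambda\|_{\bbL_2[0,K]}\leq 1$, Rellich's compact embedding yields a subsequence converging in $\bbL_2[0,K]$; a diagonal extraction over $K=1,2,\ldots$ produces a single subsequence convergent in $\bbL_2^{\mathrm{loc}}(\bbR_+)$. The uniform $\ell_2$-tightness established above then promotes this to convergence in the sense of \eqref{eq:L-conv}, which is the claim. The main technical obstacle is the passage from the weighted, possibly long-range Dirichlet form to the nearest-neighbor form needed for Rellich; this is essentially bookkeeping based on irreducibility and exponential moments of $\sfp$, but one must take care that all constants remain uniform in $\lambda$.
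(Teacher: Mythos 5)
Your proof is correct and takes a genuinely different, and in one respect cleaner, route than the paper's, though both share the same skeleton: eigenvalue equation $\to$ symmetrized quadratic-form identity $\to$ Dirichlet-form plus potential-energy estimate $\to$ irreducibility to pass to a nearest-neighbour form $\to$ Rellich on compacts plus uniform $\bbL_2$-tail control $\to$ diagonal extraction.

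The paper first conjugates by the exponential weight, passing to $\psi_\lambda(\sfr)=\mathrm{e}^{-\frac12 V_\lambda(\Hla\sfr)}\phi_\lambda(\sfr)$; the eigenvalue equation \eqref{eq:EigEq-psi} for $\psi_\lambda$ then has a plain random-walk kernel, and the resulting estimate \eqref{eq:reduction2} is a single \emph{global} $\bbH^1(\bbR_+)$-type bound containing both the gradient term and the potential term $\sum_{\sfr\geq\bar\sfr}q_0(\sfr)\psi_\lambda^2(\sfr)$. The price is the need to transfer compactness back from $\psi_\lambda$ to $\phi_\lambda$, for which the paper invokes Lemma~\ref{lem:phi-l-K}. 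You instead test $\phi_\lambda$ directly against the symmetrization $\tilde\sfS_\lambda=\tfrac12(\tilde\sfT_\lambda+\tilde\sfT_\lambda^*)$, obtaining the identity
\[
h_\lambda\sum_\sfr\phi_\lambda(\sfr)^2\bigl(1-\tilde b_\lambda(\sfr)\bigr)
+\tfrac{h_\lambda}{2}\sum_{\sfr,\sfs}\tilde\sfS_\lambda(\sfr,\sfs)\bigl(\phi_\lambda(\sfr)-\phi_\lambda(\sfs)\bigr)^2
=1-E_\lambda .
\]
The Dirichlet part now carries the weight $\mathrm{e}^{-\frac12(V_\lambda(\Hla\sfr)+V_\lambda(\Hla\sfs))}$, so the gradient bound you extract is only local, with $K$-dependent constants; but that is all Rellich on $[0,K]$ requires. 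What the paper's global bound would otherwise buy --- the $\bbL_2$-tail --- you get more directly from the potential term: $1-\tilde b_\lambda(\sfr)\geq c\min(h_\lambda^2 q_0(\sfr),1)$ combined with $1-E_\lambda=O(h_\lambda^2)$ (Lemma~\ref{lem:elambda}) gives $h_\lambda\sum_{\sfr>K}\phi_\lambda^2(\sfr)\leq C/q_0(K)$ for $\lambda$ small. A pleasant side effect is that you bypass the conversion, glossed over in the paper's Step~1, from the $\ell_1$-tail bound of Lemma~\ref{lem:phi-l-K} to the $\bbL_2$-statement \eqref{eq:phi-l-Hl}; indeed, despite announcing Lemma~\ref{lem:phi-l-K} as one of your three ingredients, your argument never actually uses it.

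Two bookkeeping points are worth making explicit. First, the row sum $\tilde b_\lambda(\sfr)$ must be taken over $\sfs\in\bbN$ only (since $\phi_\lambda$ vanishes on $\bbZ\setminus\bbN$); this can only decrease $\tilde b_\lambda$, so the bound $\tilde b_\lambda(\sfr)\leq\mathrm{e}^{-\frac12 V_\lambda(\Hla\sfr)}$ and your lower bound on the potential term still hold. Second, when you telescope along the irreducibility chain, the intermediate sites may exit $[0,K]$ or drop below $0$; you should therefore estimate the Dirichlet form on a slightly larger window $[0,K+R]$ with $R$ fixed (the exponential factor is still bounded below there uniformly in small $\lambda$), and treat jumps to $\{\sfs\leq0\}$ as contributing the boundary terms $\phi_\lambda(\sfr)^2$, exactly as the paper does via the convention of Remark~\ref{rem:NZ}. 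Neither point affects the validity of the argument.
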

\begin{proof}
The proof comprises two steps: We first show that we can
restrict attention to the compactness properties of the functions $\psi_\lambda$
defined in~\eqref{eq:psi-l-def} below. We then check that the sequence
$\psi_\lambda$ satisfies the energy-type estimate~\eqref{eq:reduction2}, which
enables a uniform control of both tails of $\psi_\lambda$ and of their Sobolev
norms over $\bbR_+$. In this way, sequential compactness follows by a standard
diagonal argument.

\smallskip
\noindent\step{1}
In view of Lemma~\ref{lem:phi-l-K}, rather than studying directly the functions
$\phi_\lambda$, we can instead study the convergence properties of the functions
\be{eq:psi-l-def}
\psi_\lambda (\sfr) =
\mathrm{e}^{-{\frac{1}{2}} V_\lambda (\Hla\sfr)} \phi_\lambda(\sfr) .
\ee
Indeed, by~\eqref{eq:tail-phi-l}, there exists a sequence $\delta_\lambda\to 0$
such that
\be{eq:phi-l-Hl}
\lim_{\lambda\to 0}  h_\lambda \sum_{\sfr\in\bbN_\lambda}
\phi_\lambda^2(\sfr) \1_{\{V_\lambda(\Hla\sfr)>\delta_\lambda\}}
= 0 .
\ee
So, \eqref{eq:phi-l-Hl} implies that the norm of the difference
$\|\phi_\lambda - \psi_\lambda\|_{2,\lambda}$ tends to zero, and hence
$\phi_\lambda - \psi_\lambda$ tends to zero in the sense of
Definition~\ref{def:ConvHS}.

\smallskip
\noindent
\step{2}
In terms of $\psi_\lambda$, the eigenvalue equation
$\tilde\sfT_\lambda\phi_\lambda = E_\lambda\phi_\lambda$ reads as (recall
Remark~\ref{rem:NZ})
\be{eq:EigEq-psi}
\sum_{\sfs\in\bbZ_\lambda} p_\lambda(\sfs-\sfr)
(\psi_\lambda(\sfs) - \psi_\lambda(\sfr)) =
(E_\lambda \mathrm{e}^{V_\lambda (\Hla\sfr)} - 1) \psi_\lambda(\sfr) .
\ee
Multiplying both sides by $-\psi_{\lambda}(\sfr)$ and summing over $\sfr$, we
get
\[
h_{\lambda} \sum_{\sfr,\sfs\in\mathbb{Z}_{\lambda}} p_{\lambda}(\sfs-\sfr)
\Bigl(
\frac{-\psi_{\lambda}(\sfr)\psi_{\lambda}(\sfs) +\psi_{\lambda}^{2}(\sfr)}
{h_{\lambda}^{2}} \Bigr) +
h_{\lambda}
\sum_{\sfr\in\mathbb{N}_{\lambda}}
\frac{E_{\lambda}e^{V_{\lambda}(\Hla\sfr)}-1}{h_{\lambda}^{2}}
\psi_{\lambda}^{2}(\sfr)
= 0 .
\]
So, for the symmetrized kernel
$\hat p_\lambda(\sfz) = (p_\lambda(\sfz) + p_\lambda(-\sfz))/2$, we obtain
\be{eq:DF-V}
h_{\lambda} \sum_{\sfr,\sfs\in\bbZ_\lambda} \hat p_\lambda(\sfs-\sfr)
\Bigr(
\frac{\psi_\lambda(\sfs) - \psi_\lambda(\sfr)}{h_\lambda}\Bigr)^2
+ h_\lambda \sum_{\sfr\in\bbN_\lambda}
\frac{E_\lambda \mathrm{e}^{V_\lambda(\Hla\sfr)}-1}{h_\lambda^2}
\psi_\lambda^2(\sfr)
= 0 .
\ee
In view of Lemma~\ref{lem:elambda}, we may assume that there exists
$\bar\sfe<\infty$ such that, possibly going to a subsequence, the limit
$\sfe=\lim_{\lambda\to 0}\sfe_\lambda$ exists and satisfies
$\sfe<\bar\sfe$. So, we may assume that
$E_\lambda \geq \mathrm{e}^{-\bar\sfe h_\lambda^2}$. Recall also
our assumption~\eqref{eq:HL-3} on the growth of $V_\lambda$.
Let $\bar\sfr = \sup\setof{\sfr}{q_0(\sfr) < \bar\sfe}$.
Then, \eqref{eq:DF-V} implies that
\be{eq:reduction1}
h_\lambda \sum_{\sfr,\sfs\in\bbZ_\lambda}
\hat p_\lambda(\sfs-\sfr)
\Bigl( \frac{\psi_\lambda(\sfs) - \psi_\lambda(\sfr)}{h_\lambda}\Bigr)^2
+ h_\lambda \sum_{\sfr\geq\bar\sfr}
q_0(\sfr) \psi_\lambda^2(\sfr)
\leq
\bar\sfe \|\psi_\lambda\|_{2,\lambda}^2 .
\ee
By construction, $\|\psi_\lambda\|_{2,\lambda}^2 \leq 1$ and, as we have
already mentioned, \eqref{eq:phi-l-Hl} implies that actually
$\lim_{\lambda\to 0} \|\psi_\lambda\|_{2,\lambda}^2 = 1$.

Furthermore, since $\sfp$ is an irreducible kernel, there exists $\delta>0$
and a finite sequence of integer states $\sfx_0,\sfx_1,\ldots,\sfx_n$ with
$\hat p_{\sfx_i-\sfx_{i-1}}\geq\delta$, which connects $\sfx_0=0$ to
$\sfx_n = 1$.
Therefore,
\be{eq:sum-n2}
\sum_{\sfr,\sfs\in\bbZ_\lambda} \hat p_\lambda(s-r)
\Bigl( \frac{\psi_\lambda(\sfs) - \psi_\lambda(\sfr)}{h_\lambda}\Bigr)^2
\geq
\frac{\delta}{ n^2   } \sum_{\sfr\in\bbN_\lambda}
\Bigl(\frac{\psi_\lambda(\sfr)-\psi_\lambda(\sfr-h_\lambda)}{h_\lambda}\Bigr)^2,
\ee
where we use the elementary  inequality
$$
(z_0-z_1)^2 + (z_1-z_2)^2 + \ldots + (z_{n-1}-z_n)^2 \geq
\frac{1}{n} (z_0-z_n)^2,
$$
valid for all real $z_i$. 
The additional $1/n$ in the prefactor $1/n^2$ in \eqref{eq:sum-n2} is due to the 
fact that each term $\lb{\psi_\lambda(\sfs) - \psi_\lambda(\sfr)}\rb^2$ is used
in this way  at most $n$ times.
Together with~\eqref{eq:reduction1}, this implies the existence of two finite
positive constants $c_1$ and $c_2$ such that
\be{eq:reduction2}
c_1 h_\lambda
\sum_{\sfr\in\bbN_\lambda} \Bigl(
\frac{\psi_\lambda(\sfr) - \psi_\lambda(\sfr-h_\lambda )}{h_\lambda}\Bigr)^2
+ h_\lambda \sum_{\sfr\geq\bar\sfr} q_0(\sfr) \psi_\lambda^2(\sfr)
\leq c_2 .
\ee
This is the desired energy estimate, which holds for all $\lambda>0$ small.

\smallskip
\noindent
The rest of the proof is straightforward.
Let $\Psi_\lambda$ be the linear interpolation of $\psi_\lambda$:
for $\sfr\in\bbN_\lambda\cup\{0\}$ and $t\in [0,1]$,
\[
\Psi_\lambda (\sfr + th_\lambda) =
(1-t)\psi_\lambda (\sfr) + t \psi_\lambda (\sfr +h_\lambda) .
\]
The relation~\eqref{eq:reduction2} and $\lim_{\sfr\to\infty} q_0(\sfr)=\infty$
imply that $\lim_{n\to\infty} \| \Psi_\lambda \1_{\{r>n\}}\|_2 = 0$,
uniformly in $\lambda$ small. On the other hand, the very
same~\eqref{eq:reduction2} and Rellich's compact embedding theorem imply that,
for any $n<\infty$, the family $\Psi_\lambda \1_{\{r\leq n\}}$ is
subsequentially compact in $\bbL_2[0,n]$. Alternatively, \eqref{eq:reduction2}
implies that the linear interpolations $\Psi_\lambda$ are uniformly continuous
on $[0,n]$ for each $n$ fixed.
We conclude that the family $\Psi_\lambda$ is subsequentially compact in
$\bbL_2(\bbR_+)$.

Remember how the map $\rho_\lambda$ was defined in~\eqref{eq:rho-lambda}.
Since $\Psi_\lambda$ is the linear interpolation of $\psi_\lambda$,
and since $\lim_{\lambda\to 0} h_\lambda = 0$, the energy
estimate~\eqref{eq:reduction2} evidently implies that
\[
\lim_{\lambda\to 0}\| \psi_\lambda - \rho_\lambda \Psi_\lambda \|_{2,\lambda}
= 0 .
\]
Hence, $\psi_\lambda$ is subsequentially compact as well.
\qed
\end{proof}

\paragraph{Convergence of semigroups}
Possibly going to a subsequence, we can assume that $\lim\sfe_\lambda=\sfe$.
We shall rely on Kurtz's semigroup convergence
theorem~\cite[Theorem~I.6.5]{EK}: Define
\be{eq:L-lambda}
\sfL_\lambda f (\sfr) =
\frac{\sfT_\lambda - \sfI}{h_\lambda^2} f(\sfr) .
\ee
The following two statements are equivalent:
\begin{enumerate}[(a)]
\item For any $u\in\calU$, one can find a sequence
$u_\lambda\in\ell_2(\bbN_\lambda)$ such that both
$\lim_{\lambda\to 0} u_\lambda = u$ and
$\lim_{\lambda\to 0} \sfL_\lambda u_\lambda = (\sfL +\sfe)u$.

\item If $\lim_{\lambda\to 0} f_\lambda = f$, then
$\lim_{\lambda\to 0} \sfT_\lambda^{\lfloor \Hla^2 t\rfloor}f_\lambda =
\mathrm{e}^{(\sfL+\sfe\sfI)t}f$.
\end{enumerate}
The above equivalence holds provided that the operators $\sfT_\lambda$ are
linear contractions (which is straightforward), and that
$\mathrm{e}^{(\sfL+\sfe\sfI)t}$ is a strongly continuous semigroup with
generator $\sfL +\sfe\sfI$, but that's exactly how it was constructed,
see~\eqref{eq:Tt}. Recall that the core $\calU$ consists of finite
linear combinations of eigenfunctions $\varphi_j$. Equivalently, we might have
considered $\calU^\prime = \sfC_0^2[0,\infty)$. Indeed, if $\chi_0$ is a smooth
function which is $1$ on $(-\infty, 0]$ and $0$ on $[1, \infty)$ and if
$\chi_R (r) = \chi_0 (r-R)$, then, for any $j$,
\be{eq:chi-approx}
\lim_{R\to\infty} \chi_R\varphi_j = \varphi_j
\quad\text{and}\quad
\lim_{R\to\infty}\sfL_{\sigma,q}(\chi_R\varphi_j) = \sfL_{\sigma,q}\varphi_j
= -\eig_j\varphi_j .
\ee
Above, both convergences are pointwise and in $\bbL_2(\bbR_+)$. In order to
check the second claim in~\eqref{eq:chi-approx}, just note that
\[
\sfL_{\sigma,q}(\chi_R\varphi_j) = -\eig_j\varphi_j +
\frac{\sigma^2}{2} (\varphi_j\chi_R^{\prime\prime}
+ 2\varphi_j^\prime\chi_R^\prime) ,
\]
and the conclusion follows, since both $\varphi_j$ and $\varphi_j^\prime$
belong to $\bbL^2$.

Consider, therefore, $u\in\sfC_0^2[0,\infty)$. Define $u_\lambda(\sfr) =
u(\sfr)$. Clearly, $\lim_{\lambda\to 0} u_\lambda = u$. On the other hand (see Remark~\ref{rem:NZ}),
\begin{multline}
\label{eq:u-to-v}
E_\lambda \mathrm{e}^{ V_\lambda (\Hla\sfr)} \sfL_\lambda u_\lambda(\sfr)
=
\frac{1}{h_\lambda^2} \Bigl(
\sum_{\sfs\in \bbZ_\lambda}  p_\lambda (\sfs-\sfr) \mathrm{e}^{ \frac{V_\lambda (\Hla\sfr) -
V_\lambda (\Hla\sfs)}{2}} u(\sfs) -
E_\lambda \mathrm{e}^{ V_\lambda (\Hla\sfr)} u_\lambda(\sfr)
\Bigr)
\\
=
\frac{1}{h_\lambda^2}
\sum_\sfs  p_\lambda (\sfs-\sfr) \bigl(
\mathrm{e}^{ \frac{V_\lambda(\Hla\sfr) - V_\lambda (\Hla\sfs)}{2}}
u(\sfs) - u(\sfr) \bigr)
+ \frac{1 - E_\lambda \mathrm{e}^{ V_\lambda (\Hla\sfr)}}{h_\lambda^2}
u(\sfr).
\end{multline}
Choose $R$ such that ${\rm supp}(u)\in [0,R]$. Possibly going to a sub-sequence  assume
that $\sfe = \lim \sfe_\lambda$ exists.
Then, by our assumptions on $V_\lambda$,
the second term converges to $(\sfe - q (\sfr))u (\sfr )$, uniformly in $r\in [0,R]$.

As for the first term in~\eqref{eq:u-to-v}, note that,
since $u_\lambda(\sfs) \equiv 0 $ for $\sfs > R$
and $p_\lambda(\sfs-\sfr) \leq \mathrm{e}^{-c \Hla \abs{\sfs-\sfr}}$,
we may restrict attention to $\sfr,\sfs \leq R+1$.  But then, again  by our
assumptions on $V_\lambda$, the quantity
\[
\abs{(V_\lambda(\Hla\sfs) - V_\lambda(\Hla\sfr))
p_\lambda(\sfs-\sfr)}
= h_\lambda^2 \abs{\lb q(\sfs) - q(\sfr) \rb p_\lambda(\sfs-\sfr)} +
\smo{h_\lambda^2} =
\smo{h_\lambda^2} .
\]
Finally, by our assumptions~\eqref{eq:Assumption} and~\eqref{eq:sigma}
on the underlying random walk,
\[
\lim_{\lambda\to 0}
\frac{1}{h_\lambda^2} \sum_{\sfs \in \bbZ_\lambda}
p_\lambda(\sfs-\cdot) (u(\sfs)-u(\cdot)) =
\frac{\sigma^2}{2} u^{\prime\prime} (\cdot ) ,
\]
in the sense of Definition~\ref{def:ConvHS}.

We have proved:
\begin{proposition}
\label{thm:ConvSG}
Under our assumptions on $V_\lambda$ and $\sfp$, the following convergence (in
the sense of Definition~\ref{def:ConvHS}), holds uniformly in $t$ on compact
subsets of $\bbR_+$:
If $\lim_{k\to\infty} \sfe_{\lambda_k} = \sfe$ and
$\lim_{k\to\infty} f_{\lambda_k} = f$, then
\be{eq:ConvSG}
\lim_{k \to \infty}
\sfT_{\lambda_k}^{\lfloor H_{\lambda_k}^2 t\rfloor} f_{\lambda_k} =
\mathrm{e}^{(\sfL +\sfe\sfI)t} f .
\ee
\end{proposition}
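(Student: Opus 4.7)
The plan is to deduce (b) from (a) via Kurtz's semigroup convergence theorem \cite[Theorem~I.6.5]{EK}, so essentially all that remains is to verify statement (a) rigorously for a suitable core. As explained in the discussion preceding the proposition, the set $\calU' = \sfC_0^2[0,\infty)$ is a core for $\sfL+\sfe\sfI$ on account of the cutoff approximation~\eqref{eq:chi-approx}, and it is enough to produce, for each $u\in\calU'$, a sequence $u_\lambda\in\ell_2(\bbN_\lambda)$ with $\lim u_\lambda = u$ and $\lim \sfL_\lambda u_\lambda = (\sfL + \sfe\sfI)u$ in the sense of Definition~\ref{def:ConvHS}. The natural choice is $u_\lambda(\sfr) = u(\sfr)$; the convergence $u_\lambda \to u$ is immediate from the definition of $\rho_\lambda$ in~\eqref{eq:rho-lambda} and the continuity of $u$.

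First, I would verify the contractivity of the iterates $\sfT_\lambda^n$ (as required to apply Kurtz), which follows because $\sfT_\lambda = \tilde\sfT_\lambda/E_\lambda$ is a positive operator whose Krein-Rutman principal eigenvalue equals $1$; this together with the compactness asserted in Subsection~\ref{sub:Tlambda} gives a uniform bound on iterates in $\ell_2(\bbN_\lambda)$. Then I would pick a compact set $[0,R]\supset \operatorname{supp}(u)$ and use the splitting~\eqref{eq:u-to-v}, analyzing each term separately, restricting attention to $\sfr\in[0,R+1]$ because $p_\lambda(\sfs-\sfr)\leq \mathrm{e}^{-c\Hla\abs{\sfs-\sfr}}$ makes contributions with $\abs{\sfs-\sfr}\gg h_\lambda$ negligible in the $\ell_2(\bbN_\lambda)$ norm.

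For the potential term, I would use~\eqref{eq:HL-2} to expand $V_\lambda(\Hla\sfr) = h_\lambda^2 q(\sfr) + \smo{h_\lambda^2}$ uniformly on $[0,R+1]$, combine with $E_\lambda = \mathrm{e}^{-\sfe_\lambda h_\lambda^2}$ and $\sfe_\lambda \to \sfe$, and conclude that $(1 - E_\lambda \mathrm{e}^{V_\lambda(\Hla\sfr)})/h_\lambda^2 \to \sfe - q(\sfr)$ uniformly in $\sfr$. For the diffusion term, I would Taylor expand $\mathrm{e}^{(V_\lambda(\Hla\sfr) - V_\lambda(\Hla\sfs))/2} = 1 + \frac{V_\lambda(\Hla\sfr)-V_\lambda(\Hla\sfs)}{2} + O(h_\lambda^4)$ for the relevant $\sfr,\sfs$; the leading $1$-term produces the rescaled random-walk generator, which converges to $\frac{\sigma^2}{2}u''$ by~\eqref{eq:Assumption}--\eqref{eq:sigma}, while the correction term is bounded by
\[
\frac{1}{h_\lambda^2}\sum_\sfs p_\lambda(\sfs-\sfr) \frac{|V_\lambda(\Hla\sfs)-V_\lambda(\Hla\sfr)|}{2}|u(\sfs)| \leq C \sum_\sfs p_\lambda(\sfs-\sfr)|\sfs-\sfr| = O(h_\lambda),
\]
using the local Lipschitz bound $|q(\sfs)-q(\sfr)|\leq C|\sfs-\sfr|$ from $q\in\sfC^2$ and the first moment of $p_\lambda$. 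Finally, since $E_\lambda \mathrm{e}^{V_\lambda(\Hla\sfr)} \to 1$ uniformly on $[0,R+1]$, dividing through recovers $\sfL_\lambda u_\lambda(\sfr) \to \frac{\sigma^2}{2}u''(\sfr) - q(\sfr)u(\sfr) + \sfe u(\sfr) = (\sfL+\sfe\sfI)u(\sfr)$ pointwise and hence in $\ell_2(\bbN_\lambda)$, because all quantities are supported on $[0,R+1]$ up to exponentially small error.

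The main obstacle I anticipate is controlling the first-order correction in the Taylor expansion of the exponential prefactor $\mathrm{e}^{(V_\lambda(\Hla\sfr)-V_\lambda(\Hla\sfs))/2}$: the factor $V_\lambda(\Hla\sfr)-V_\lambda(\Hla\sfs)$ is only $o(1)$, not $O(h_\lambda^2)$, for \emph{arbitrary} $\sfs$, so one genuinely needs to exploit the exponential tail of $p_\lambda$ to confine $\sfs$ to a neighborhood of $\sfr$ of mesoscopic size, and then apply the $\sfC^2$-regularity of $q$ together with~\eqref{eq:HL-2} to get the required $O(h_\lambda^3)$ bound. Once this local-to-global passage is handled, the rest is bookkeeping, and Kurtz's theorem delivers~\eqref{eq:ConvSG} uniformly in $t$ on compact subsets of $\bbR_+$.
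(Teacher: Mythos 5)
Your proposal follows essentially the same route as the paper: invoke Kurtz's theorem, work on the core $\calU'=\sfC_0^2[0,\infty)$, take $u_\lambda(\sfr)=u(\sfr)$, split via~\eqref{eq:u-to-v}, restrict to a compact range using the exponential tail of $p_\lambda$, and pass to the limit term by term. The only substantive difference is that you make the Taylor expansion of the exponential prefactor and the resulting $O(h_\lambda)$ bound explicit (via the local Lipschitz estimate $|q(\sfs)-q(\sfr)|\leq C|\sfs-\sfr|$ on $[0,R+1]$) and then divide through by $E_\lambda\mathrm{e}^{V_\lambda(\Hla\sfr)}\to 1$, which spells out a step the paper leaves implicit; both arguments are the same in substance.
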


\paragraph{Convergence of eigenvalues and eigenfunctions}
\begin{theorem}
\label{thm:phihat}
Under our assumptions on $V_\lambda$ and $\sfp$,
\be{eq:phihat}
\eig_0 = \lim_{\lambda\to 0}\sfe_\lambda , \quad
\varphi_0 = \lim_{\lambda\to 0}\phi_\lambda = \lim_{\lambda\to 0}\phi_\lambda^*
\quad\text{and}\quad
\lim_{\lambda\to 0}\frac{c_\lambda}{h_\lambda} = 1 .
\ee
\end{theorem}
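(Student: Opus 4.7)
The plan is to combine the sequential compactness of Proposition~\ref{prop:Comp}, the bounds of Lemma~\ref{lem:elambda}, and the semigroup convergence of Proposition~\ref{thm:ConvSG} to identify every subsequential limit of $(\sfe_\lambda, \phi_\lambda)$ with $(\eig_0, \varphi_0)$, and then to argue similarly for $\phi_\lambda^*$. Given any sequence $\lambda_n \to 0$, Lemma~\ref{lem:elambda} produces a subsequence along which $\sfe_{\lambda_n} \to \sfe \in (0, \infty)$, and Proposition~\ref{prop:Comp} then provides a further subsequence $\lambda_k$ along which $\phi_{\lambda_k} \to \phi$ in the sense of Definition~\ref{def:ConvHS}. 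Since $\mathsf{s}$-convergence is norm convergence, the normalization $\|\phi_{\lambda_k}\|_{2, \lambda_k} = 1$ forces $\|\phi\|_2 = 1$, and since each $\phi_{\lambda_k}$ is strictly positive, $\phi \geq 0$ a.e.; in particular $\phi \not\equiv 0$.

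The identification step is the core of the argument. Iterating the eigen-equation $\sfT_\lambda \phi_\lambda = \phi_\lambda$ yields $\sfT_\lambda^{\lfloor \Hla^2 t \rfloor} \phi_\lambda = \phi_\lambda$ for every $t \geq 0$. Applying Proposition~\ref{thm:ConvSG} along $\lambda_k$ with $f_{\lambda_k} = \phi_{\lambda_k}$, the left-hand side converges to $\mathrm{e}^{(\sfL + \sfe \sfI)t}\phi$ while the right-hand side converges to $\phi$. Hence $\mathrm{e}^{(\sfL + \sfe \sfI)t}\phi = \phi$ for every $t > 0$. Expanding $\phi = \sum_j a_j \varphi_j$ in the complete orthonormal system and using the explicit diagonalization~\eqref{eq:Tt}, this becomes $\sum_j a_j \mathrm{e}^{(\sfe - \eig_j)t}\varphi_j = \sum_j a_j \varphi_j$, which forces $a_j = 0$ unless $\sfe = \eig_j$; since the $\eig_j$ are distinct, $\phi$ is proportional to a single $\varphi_{j_0}$ with $\sfe = \eig_{j_0}$. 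The Sturm-Liouville oscillation property recalled after~\eqref{eq:eigenv-ST} says that $\varphi_j$ changes sign for $j \geq 1$; combined with $\phi \geq 0$ and $\|\phi\|_2 = 1$, this forces $j_0 = 0$, so $\phi = \varphi_0$ and $\sfe = \eig_0$. Because every subsequence of $\lambda_n$ admits a further subsequence with the same limit, one concludes $\sfe_\lambda \to \eig_0$ and $\phi_\lambda \to \varphi_0$ along the full family.

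The same argument applies to $\phi_\lambda^*$: the generator-level expansion~\eqref{eq:u-to-v} for $\sfT_\lambda^*$ differs only by $\sfs - \sfr \leftrightarrow \sfr - \sfs$ in the jump, which leaves the vanishing first moment and the second moment $\sigma^2$ intact by~\eqref{eq:Assumption}--\eqref{eq:sigma}, so the analog of Proposition~\ref{thm:ConvSG} delivers the same limiting semigroup $\mathrm{e}^{(\sfL + \eig_0 \sfI)t}$, and one obtains $\phi_\lambda^* \to \varphi_0$ in the same way. From~\eqref{eq:c-lambda} and the continuity of $\langle \cdot, \cdot\rangle_{2,\lambda}$ under $\mathsf{s}$-convergence (itself a direct consequence of the definition of $\rho_\lambda$ and Cauchy-Schwarz),
\begin{equation*}
\frac{h_\lambda}{c_\lambda} = \langle \phi_\lambda, \phi_\lambda^* \rangle_{2, \lambda} \xrightarrow[\lambda \to 0]{} \langle \varphi_0, \varphi_0 \rangle_2 = 1 .
\end{equation*}

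The main obstacle is the identification step: one must extract strong compactness limits that do not lose mass at infinity (already delivered by Proposition~\ref{prop:Comp}), promote the fixed-point identity $\mathrm{e}^{(\sfL+\sfe\sfI)t}\phi = \phi$ into identification of $\phi$ with a particular $\varphi_j$ (clean from~\eqref{eq:Tt}), and exclude the higher eigenfunctions using the Sturm-Liouville oscillation property together with the preserved positivity of $\phi_\lambda$.
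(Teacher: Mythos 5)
Your proof is correct and follows essentially the same route as the paper: compactness via Lemma~\ref{lem:elambda} and Proposition~\ref{prop:Comp}, then the fixed-point identity $\mathrm{e}^{(\sfL+\sfe\sfI)t}\phi=\phi$ from Proposition~\ref{thm:ConvSG}, identification with $\varphi_0$, the subsequence principle, and the inner-product computation for $c_\lambda/h_\lambda$. The only difference is that you spell out the identification step (spectral expansion in the $\{\varphi_j\}$ basis plus the oscillation property to exclude $j\geq 1$) where the paper asserts directly that a non-negative normalized $\bbL_2$-eigenfunction of $\sfL$ must be $\varphi_0$; both are correct and your version is a useful elaboration of the same argument.
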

\begin{proof}
By Lemma~\ref{lem:elambda}, the set $\{\sfe_\lambda\}$ is bounded and,
by Proposition~\ref{prop:Comp}, the set $\{\phi_\lambda\}$ is sequentially
compact.
Let $\lambda_k\searrow 0$ be a sequence such that both
$\sfe = \lim_{k\to\infty} \sfe_{\lambda_k}$ and
$\varphi = \lim_{k\to\infty}\phi_{\lambda_k}$ exist.
Then Proposition~\ref{thm:ConvSG} implies that
\[
\varphi = \mathrm{e}^{(\sfL +\sfe\sfI)t}\varphi .
\]
By compactness, $\|\varphi\|_2 = 1$.
In other words, $\varphi$ is a non-negative normalized
$\bbL_2(\bbR_+)$-eigenfunction of $\sfL$ with eigenvalue $-\sfe$. Which means
that $\varphi = \varphi_0$ and $\sfe = \eig_0$. Exactly the same argument
applies to $\phi_\lambda^*$.

By construction (see~\eqref{eq:c-lambda}),
$1\equiv
c_\lambda \sum_{\sfr\in\bbN_\lambda} \phi_\lambda(\sfr) \phi_\lambda^*(\sfr) =
\frac{c_\lambda}{h_\lambda}
\langle\phi_\lambda,\phi_\lambda^*\rangle_{2,\lambda}$.
Since, by the second assertion of~\eqref{eq:phihat},
$\lim_{\lambda\to 0} \langle \phi_\lambda,\phi_\lambda^* \rangle_{2,\lambda} =
\|\varphi_0\|_2^2 = 1$, the last claim of Theorem~\ref{thm:phihat} follows
as well.
\qed
\end{proof}

\paragraph{Convergence of finite-dimensional distributions}
Recall our notations $\bbP_\lambda$ and $\bbP_{\sigma,q}$ for the path
measures of the ground-state chain $X_n$ and the Ferrari-Spohn diffusion
$x(t)$. Recall also our rescaling of the ground-state chain:
$x_\lambda(t) = h_\lambda X_{\lfloor \Hla^2 t\rfloor}$.
\begin{corollary}
\label{thm:ConvFDD}
For any $k$, any $0 < s_1 < s_2 <\dots <s_k$ and for any collection of bounded
continuous functions  $u_0,\ldots,u_k\in\sfC_{\rm b}(\bbR_+)$,
\begin{multline}\label{eq:ConvFDD}
\lim_{\lambda\to 0}
\bbE_\lambda\{u_0(x_\lambda(0)) u_1(x_\lambda(s_1))\cdots
u_k(x_\lambda(s_k))\}\\
= \bbE_{\sigma,q} \{ u_0(x(0)) u_1(x(s_1))\cdots u_k(x(s_k))\} .
\end{multline}
\end{corollary}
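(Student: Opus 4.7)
The plan is to rewrite both sides of~\eqref{eq:ConvFDD} as nested compositions of semigroup operators with multiplications by the $u_i$'s, and then to collapse the discrete side onto the continuous one via Theorem~\ref{thm:phihat} and an iterated application of Proposition~\ref{thm:ConvSG}. Set $s_0=0$ and $n_i=\lfloor H_\lambda^2(s_i-s_{i-1})\rfloor$. Stationarity combined with the Doob identity $\pi_\lambda^n(\sfx,\sfy) = \phi_\lambda(\sfy)\sfT_\lambda^n(\sfx,\sfy)/\phi_\lambda(\sfx)$ makes the intermediate $\phi_\lambda$-factors telescope along the path, so that
\begin{equation*}
\bbE_\lambda\Bigl[\prod_{i=0}^k u_i(x_\lambda(s_i))\Bigr]
= \frac{c_\lambda}{h_\lambda}\,\bigl\langle u_0\phi_\lambda^*,\,\sfT_\lambda^{n_1}\bigl[u_1\,\sfT_\lambda^{n_2}[u_2\cdots\sfT_\lambda^{n_k}[u_k\phi_\lambda]]\bigr]\bigr\rangle_{2,\lambda},
\end{equation*}
where each $u_i$ is identified with its restriction to $\bbN_\lambda$. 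A parallel unfolding on the continuous side, using $\sfS^t\psi = \varphi_0^{-1}\sfT^t(\psi\varphi_0)$ with $\sfT^t = \mathrm{e}^{(\sfL+\eig_0\sfI)t}$ in~\eqref{eq:T-FS}, makes the inserted $\varphi_0$-factors cancel pairwise against those produced by the invariant density $\varphi_0^2\,dr$, leaving
\begin{equation*}
\bbE_{\sigma,q}\Bigl[\prod_{i=0}^k u_i(x(s_i))\Bigr]
= \bigl\langle u_0\varphi_0,\,\sfT^{s_1}\bigl[u_1\,\sfT^{s_2-s_1}[u_2\cdots\sfT^{s_k-s_{k-1}}[u_k\varphi_0]]\bigr]\bigr\rangle_{2}.
\end{equation*}

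Next, I would pass to the limit by iterating from the innermost slot outwards. Two routine compatibility properties of {\sf s}-convergence (Definition~\ref{def:ConvHS}) are needed: (a)~for $u\in\sfC_{\rm b}(\bbR_+)$ and $\slim_{\lambda\to 0} f_\lambda = f$, one has $\slim_{\lambda\to 0} (uf_\lambda) = uf$, which follows from the modulus of continuity of $u$ on compacts together with the $\bbL_2$-tail decay of $f$; and (b)~scalar products pass through, $\langle f_\lambda,g_\lambda\rangle_{2,\lambda}\to\langle f,g\rangle_2$ whenever $\slim f_\lambda = f$ and $\slim g_\lambda = g$. Theorem~\ref{thm:phihat} supplies $\slim\phi_\lambda=\slim\phi_\lambda^*=\varphi_0$, $c_\lambda/h_\lambda\to 1$, and $\sfe_\lambda\to\eig_0$ along the full sequence, so that Proposition~\ref{thm:ConvSG} applies without extracting a subsequence. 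Starting from $u_k\phi_\lambda\to u_k\varphi_0$ by~(a), the proposition with $t=s_k-s_{k-1}$ yields $\sfT_\lambda^{n_k}[u_k\phi_\lambda]\to\sfT^{s_k-s_{k-1}}[u_k\varphi_0]$; multiplying by $u_{k-1}$, applying $\sfT_\lambda^{n_{k-1}}$, and iterating $k$ times produces {\sf s}-convergence of the entire nested expression. Pairing with $u_0\phi_\lambda^*$, invoking~(b), and absorbing $c_\lambda/h_\lambda\to 1$ yields~\eqref{eq:ConvFDD}.

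All the genuine analytic work has been done in Theorem~\ref{thm:phihat} and Proposition~\ref{thm:ConvSG}; what remains here is algebraic bookkeeping. The only mildly delicate point is the verification of~(a) and~(b) above, both of which are standard once one truncates $\bbL_2$-tails on $\bbR_+$ carefully and uses $\|\rho_\lambda\|\leq 1$.
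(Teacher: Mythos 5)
Your argument is correct and follows the same route as the paper: rewrite the stationary Markov-chain expectation as $\tfrac{c_\lambda}{h_\lambda}\langle u_0\phi_\lambda^*,\,\sfT_\lambda^{n_1}[u_1\cdots\sfT_\lambda^{n_k}[u_k\phi_\lambda]]\rangle_{2,\lambda}$, rewrite the diffusion expectation likewise via~\eqref{eq:T-FS}, invoke Theorem~\ref{thm:phihat} for $\slim\phi_\lambda=\slim\phi_\lambda^*=\varphi_0$, $c_\lambda/h_\lambda\to1$ and $\sfe_\lambda\to\eig_0$ (along the full sequence), and then pass to the limit by iterating Proposition~\ref{thm:ConvSG} together with exactly the two compatibility facts (a) and (b) that the paper also uses. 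No discrepancies.
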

\begin{proof}
Set $s_0=0$ and $t_i = s_i - s_{i-1}$.
Since $\mu_\lambda = {c_\lambda} \phi_\lambda^*\phi_\lambda$ and in view of the
expressions~\eqref{eq:pi-lambda} for transition probabilities $\pi_\lambda$ of
the ground-state chain, the rightmost asymptotic relation in~\eqref{eq:phihat},
and~\eqref{eq:T-FS} for Ferrari-Spohn semigroups, the target
formula~\eqref{eq:ConvFDD} can be written as
\begin{multline}
\label{eq:ConvFDD-1}
\lim_{\lambda\to 0}
h_\lambda \sum_{\sfr\in\bbN_\lambda} \phi^*_\lambda (\sfr) u_{\lambda,0}(\sfr)
\sfT_\lambda^{\lfloor \Hla^2 t_1\rfloor}
\bigl( u_{\lambda,1}\sfT_\lambda^{\lfloor \Hla^2 t_2\rfloor}\bigl(
u_{\lambda,2}\cdots\sfT_\lambda^{\lfloor \Hla^2 t_k\rfloor}
(u_{\lambda,k}\phi_\lambda ) \cdots\bigr)\bigr) (\sfr)\\
=
\int_0^\infty \varphi_0 (r) u_0(r) \sfT^{t_1}\bigl( u_1\sfT^{t_2} \bigl(
u_2\cdots \sfT^{t_k} (u_k\varphi_0)\cdots \bigr)\bigr) (r) \,\dd r,
\end{multline}
where $u_{\lambda,i}$ and $u_i$ coincide on $\bbN_\lambda$.
Theorem~\ref{thm:phihat} implies that
$\lim_{\lambda\to 0}\phi_\lambda = \varphi_0$ and
$\lim_{\lambda\to 0}\phi_\lambda^* = \varphi_0$.
Hence, by induction, \eqref{eq:ConvFDD-1} is a consequence of
Proposition~\ref{thm:ConvSG} and the following two elementary facts:
\begin{enumerate}[(a)]
\item If $\lim v_\lambda = v$ and $u_\lambda(\sfr) = u(\sfr)$ with $u$ being a
bounded continuous function, then $\lim_{\lambda\to 0} v_{\lambda} u_\lambda =
vu$.
\item If $\lim u_\lambda =u$ and $\lim v_\lambda =v$, then
$\lim_{\lambda\to 0} \langle  u_\lambda , v_\lambda \rangle_{2,\lambda} =
\langle u, v\rangle_2$.\qed
\end{enumerate}
\end{proof}

\section{Probabilistic tools}
\label{sec:tools}
The derivations of the probabilistic estimates given below are based on the
techniques and ideas developed in~\cite{HV04}. Nevertheless, because our
setting is slightly different and for completeness, we provide detailed proofs.
In addition, one of the needed claims from~\cite{HV04} (Theorem~1.2 therein)
contains a mistake, which we correct here.

\medskip
Recall our notation $\hat\calP_{N,+}^{\sfu,\sfv} = \calP_{1,N,+}^{\sfu,\sfv}$.
As before, given a path $\bbX = (X_1,\ldots,X_N)$, set
$\RWP(\bbX)=\prod_{i=1}^{N-1} p_{X_{i+1}-X_i}$.
Define $\hat\calP_{N,+}^{\sfu,\varnothing} =
\cup_{\sfv\in\bbZ_+} \hat\calP_{N,+}^{\sfu,\sfv}$ and consider the partition
functions
\[
\hat Z_{N,+,\lambda}^{\sfu,\varnothing}
=
\sum_{\bbX\in\hat\calP_{N,+}^{\sfu,\varnothing}}
e^{-\sum_{i=1}^N V_\lambda (X_i)} \, \RWP(\bbX).
\]
More generally, given any subset
$\calC\subset\hat\calP_{N,+}^{\sfu,\varnothing}$, we denote by
\[
\hat Z^{\sfu,\varnothing}_{N,+,\lambda}[\calC]
=
\sum_{\bbX\in\calC}
e^{-\sum_{i=1}^N V_\lambda (X_i)} \, \RWP(\bbX),
\]
the partition function restricted to paths satisfying the constraint $\calC$.

\subsection{Proof of Lemma~\ref{lem:elambda}}
\label{sub:elambda}
The proof will rely on the following identity
which, exactly  as  \eqref{eq:ReprTlambda}, is straightforward from the very 
definition of $\tilde\sfT_\lambda$ in \eqref{eq:tTlambda}:
\[
\hat Z_{N,+,\lambda}^{\sfu,\varnothing} =
\mathrm{e}^{-\frac{1}{2} V_\lambda (\sfu)} \tilde\sfT_\lambda^{N}[f_\lambda](\sfu),
\]
where $f_\lambda (x)  = \mathrm{e}^{-\frac{1}{2}V_\lambda (x)}$.
Since $f_\lambda$ is positive,
\be{eq:El-ZN}
\log E_\lambda =
\lim_{N\to\infty} \frac1N \log\hat Z_{N,+,\lambda}^{\sfu,\varnothing}\,,
\ee
for all $\sfu\in\bbZ^+$.
In particular, the claim of Lemma~\ref{lem:elambda} will follow from lower and
upper bounds on $\hat Z_{N,+,\lambda}^{\sfu,\varnothing}$ for finite values of
$N$ and $\lambda$.
In the sequel, we shall allow rather general values of the boundary condition
$\sfu$. Of course, to derive the claim of Lemma~\ref{lem:elambda}, we could as
well take $\sfu=0$.

We shall compare the tilted partition functions
$\hat Z_{N,+,\lambda}^{\sfu,\varnothing}$
and $\hat Z_{N,+,0}^{\sfu,\varnothing}$. The latter equals to
the probability that the random walk starting at $\sfu$ stays positive
for first $N$ steps of its life. This probability is evidently non-decreasing
with $\sfu$ and, as is well known (see for instance~\cite{AD}),
it is of order $N^{-1}$ for $\sfu = 1$.
In particular,
\be{eq:pf-not}
\lim_{N\to\infty}\frac{1}{N} \log \hat Z_{N,+,0}^{\sfu,\varnothing} = 0,
\ee
uniformly in $\sfu\in\bbN$.

\paragraph{Lower bound on $\hat Z^{\sfu,\varnothing}_{N,+,\lambda}$ and upper
bound on $\sfe_\lambda$}

\begin{figure}
\begin{center}
\scalebox{.43}{\input{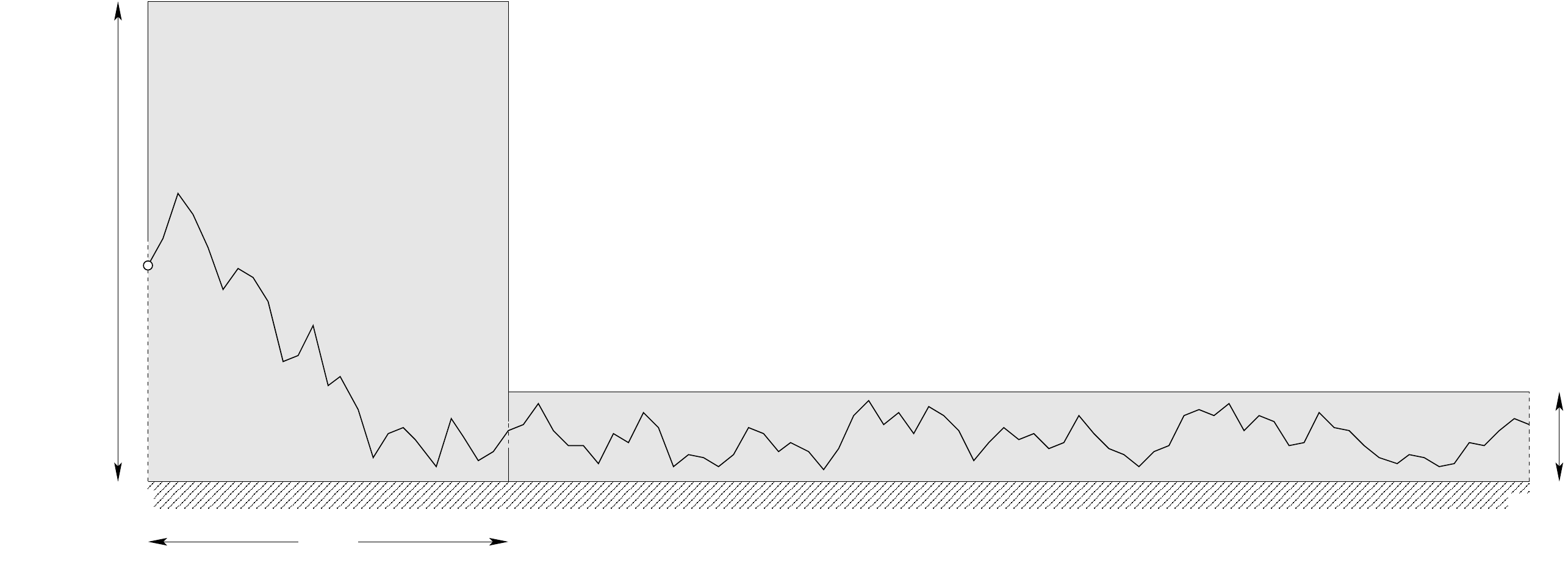_t}}
\end{center}
\caption{The construction for the lower bound in the proof of
Lemma~\ref{lem:elambda}.}
\label{fig:LB_Z}
\end{figure}

We claim that there exist finite constants $\overline{\sfe}$ and $c_1$ such
that, for any $K \geq 1$ fixed,
\begin{equation}
\label{eq_lowerbound_Z}
\hat Z^{\sfu,\varnothing}_{N,+,\lambda}
\geq
e^{-\overline{\sfe}  N \Hla^{-2} -c_1 K\sqrt{q(2K)}}\,
\hat Z^{\sfu,\varnothing}_{N,+,0},
\end{equation}
uniformly in $\lambda$ small,  $0\leq\sfu\leq K\Hla$ and
\be{eq:Delta}
N \gg  \Delta = \Delta(K,\lambda) = \frac{K\Hla^2}{\sqrt{q(2K)}} .
\ee
In view of~\eqref{eq:El-ZN} and~\eqref{eq:pf-not}, this implies that
$\sfe_\lambda = - \Hla^2 \log E_\lambda \leq \overline{\sfe}$ for all
$\lambda$ sufficiently small.

In order to check~\eqref{eq_lowerbound_Z}, we restrict the partition function
to trajectories made of two pieces (see Figure~\ref{fig:LB_Z}). 
The left part
is used to bring the interface below $\Hla$; in the remaining piece, the
interface remains inside a tube of height $\Hla$.

\smallskip
We consider the events
\footnote{Here and several times in the sequel, we assume numbers like $\Delta$
to be integers whenever it is desirable.}
\begin{gather*}
\calD_L = \bsetof{\bbX\in\hat \calP_{N,+}^{\sfu,\varnothing}}
{\max_{i\in \{1,\ldots,\Delta\}} X_i \leq 2K\Hla,
X_\Delta\in [\tfrac13\Hla,\tfrac23\Hla]},\\
\calD_M = \bsetof{\bbX\in\hat \calP_{N,+}^{\sfu,\varnothing}}
{\max_{i\in \{\Delta,\ldots,N\}} X_i \leq \Hla}.
\end{gather*}
Then
\[
\hat Z^{\sfu,\varnothing}_{N,+,\lambda}
\geq
e^{-\Delta V_\lambda(2K\Hla) - (N-\Delta) V_\lambda(\Hla)}\,
\RWP(\calD_L\cap\calD_M \given \hat\calP^{\sfu,\varnothing}_{N,+})\,
\hat Z^{\sfu,\varnothing}_{N,+,0}.
\]
By the assumptions~\eqref{eq:HL-1} and~\eqref{eq:HL-2},
$H_\lambda^2 V_\lambda (\sfr \Hla) < 2 q (\sfr)$ uniformly in
$\sfr\in [0,2K]$, for all $\lambda$ sufficiently small.
Hence, for such
$\lambda$, the exponent in the right-hand side is bounded below by
$e^{-2 K\sqrt{q(2K)} - 2 N\Hla^{-2}}$.

It remains to estimate
$\RWP(\calD_L\cap\calD_M \given\calP^{\sfu,\varnothing}_{N,+})$.
By the invariance principle (for a random walk conditioned to stay
positive; see first \cite[Theorem~1]{BD94} and then~\cite[Theorem~1.1]{CCh08}),
\[
\liminf_{N\to\infty} \,
\RWP(\calD_L\given \calP^{\sfu,\varnothing}_{N,+})
\geq
e^{-c_2 K\sqrt{q(2 K)}},
\]
for some absolute constant $c_2>0$, provided that $\lambda$ be small enough.

On the other hand, letting $\calD' = \{\sup_{0\leq i\leq \Hla^2}
X_i \leq
\Hla\}\cap\{X_{\lceil\Hla^2\rceil}\in[\tfrac13\Hla,\tfrac23\Hla]\}$, it follows
from the Markov property that
\begin{align*}
\inf_{\ell\in[\tfrac13\Hla,\tfrac23\Hla]}
\RWP(\calD_M \given &X_L=\ell, X_i\geq 0\;\forall \Delta\leq i \leq N)\\
&\geq
\Bigl\{
\inf_{\ell\in[\tfrac13\Hla,\tfrac23\Hla]}
\RWP(\calD' \given X_0=\ell, X_i\geq
0\;\forall \Delta\leq i \leq \Hla^2)
\Bigr\}^{\lceil N/\Hla^2\rceil}\\
&\geq
e^{-c_3 N \Hla^{-2}} .
\end{align*}

\paragraph{Upper bound on $\hat Z^{\sfu,\varnothing}_{N,+,\lambda}$ and
lower bound on $\sfe_\lambda$}

\begin{figure}
\begin{center}
\scalebox{.5}{\input{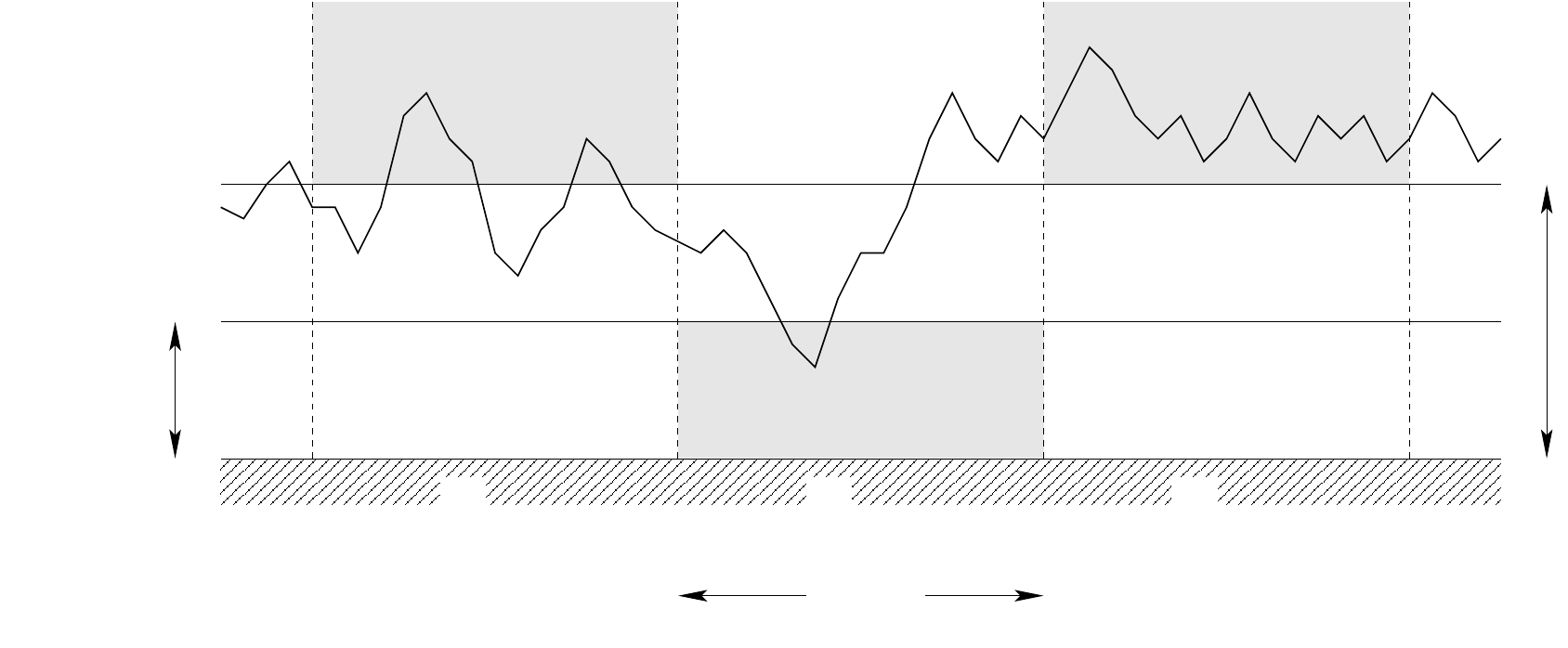_t}}
\end{center}
\caption{The event $\calB_k$ occurs if the path visits both leftmost and
rightmost shaded areas. The event $\calC_k$ occurs if, in addition, it also
visits the third one.}
\label{fig:UB_Z}
\end{figure}

We claim that there exist $\bar\lambda>0 $ and a positive constant
$\underline{\sfe}$ such that
\begin{equation}
\label{eq_upperbound1_Z}
\hat Z^{\sfu,\varnothing}_{N,+,\lambda} \leq
e^{-\underline{\sfe} N \Hla^{-2}} \hat Z^{\sfu,\varnothing}_{N,+,0},
\end{equation}
uniformly in $\sfu\geq 0$, $N\geq\Hla^2$ and $\lambda<\bar\lambda$.

\smallskip
Let us fix some small $\epsilon>0$ (which does not have to be very small; one
can optimize over it at the end of the proof).
The idea behind the proof is that a typical trajectory has many disjoint
segments of the length at least $\epsilon H_{\lambda }^{2},$ which
are at a distance at least $\sqrt{\epsilon }\Hla$ from the wall.

We partition  the interval $\{1,\ldots,N\}$ into $N_\lambda$ disjoint
intervals $b_1,\ldots,b_{N_\lambda}$ of length $\epsilon\Hla^2$ and, possibly,
one additional shorter rightmost interval.

We say that the event $\calB_k$ occurs if (see Figure~\ref{fig:UB_Z})
\begin{equation}\label{eq_upperbound_Z}
\max_{i\in b_{2k-1}} X_i > 2\sqrt{\epsilon}\Hla \qquad\text{and}\qquad
\max_{i\in b_{2k+1}} X_i > 2\sqrt{\epsilon}\Hla.
\end{equation}
Let us denote by $G$ the number of indices $k$ for which the event
$\calB_k$ occurs.
It follows from the CLT that there exists $\kappa_1 >0$ such that
\be{eq:bound-uw}
\inf_{\sfv,\sfw\geq 0}
\RWP\bigl(\max_{1\leq i\leq \epsilon \Hla^2} X_i >
2\sqrt{\epsilon}\Hla \,\bigm|\, \calP_{1,\epsilon\Hla^2,+}^{\sfv,\sfw}\bigr) >
\kappa_1 .
\ee
Observe that the events
$\{\max_{i\in b_{2j-1}} X_i > 2\sqrt{\epsilon}\Hla\}$,
$j=1,\ldots,N_\lambda/2$, are conditionally independent
given the trajectories in the intervals $b_{2k}$.
As a result, \eqref{eq:bound-uw} implies that there exists $\kappa_2>0 $ such
that
\be{eq:Gbound-1}
\RWP(G \leq \tfrac18 \kappa_1^2 N_\lambda \given
\hat\calP_{N,+}^{\sfu,\varnothing}) \leq e^{-\kappa_2  N_\lambda} ,
\ee
uniformly in $\sfu$.

Similarly, let us say that the event $\calC_k$ occurs if $\calB_k$ occurs and
(see Figure~\ref{fig:UB_Z})
\[
\min_{i\in b_{2k}} X_i < \sqrt{\epsilon}\Hla ,
\]
and let us denote by $G'$ the number of indices such that $\calC_k$ occurs.

The occurrence of $\calC_k$ enforces a downward fluctuation at least as large as
$\sqrt{\epsilon}\Hla$ on a time interval of length at most $3\epsilon\Hla^2$.
The functional CLT~\cite[Theorem~2.4]{CCh13} implies that such an event has
probability at most
$\kappa_3$, for some $\kappa_3<1$, uniformly in $\lambda$ small.
This implies that there exists $\kappa_4 >0$, such that
\be{eq:Gbound-2}
\RWP\bigl( G' \geq \frac{1+\kappa_3}{2} g \,\bigm|\,
\hat\calP_{N,+}^{\sfu,\varnothing} ; G=g\bigr) \leq
\mathrm{e}^{-\kappa_4 g},
\ee
uniformly in $\sfu$ and $g$. Altogether, \eqref{eq:Gbound-1} and
\eqref{eq:Gbound-2} yield
\be{eq:Gbound-3}
\RWP\bigl( G- G' \leq \frac{1-\kappa_3}{8} \kappa_1^2 N_\lambda
\,\bigm|\, \hat\calP_{N,+}^{\sfu,\varnothing}\bigr)
\leq \mathrm{e}^{-\kappa_5 N_\lambda}
\leq \mathrm{e}^{-\kappa_6 \epsilon^{-1} \Hla^{-2}N} .
\ee
The quantity $G-G'$ provides a lower bound on the number of disjoint intervals
$b_{2k}$ of length $\epsilon \Hla^2$ such that
$\min_{i\in b_{2k}} X_i \geq \sqrt{\epsilon} \Hla$. Therefore,
\be{eq:Gbound-4}
\sum_{i=1}^N V_\lambda(X_i) \geq
(G-G') \epsilon \Hla^{2} V(\sqrt{\epsilon}\Hla)
\geq (G-G') \epsilon q_0(\sqrt{\epsilon})
\geq \kappa_7 q_0(\sqrt{\epsilon}) \Hla^{-2} N ,
\ee
whenever $G-G' \geq \kappa_7 N_\lambda$. Take
$\kappa_7 = \frac{1-\kappa_3}{8}\kappa_1^2$.
The conclusion~\eqref{eq_upperbound1_Z} follows from~\eqref{eq:Gbound-3}
and~\eqref{eq:Gbound-4}.
\qed

\subsection{Proof of Lemma~\ref{lem:phi-l-K}}
\label{sub:lem-phi}
We shall prove Lemma~\ref{lem:phi-l-K} only for $\phi_\lambda^*$. The proof for
$\phi_\lambda$ is a literal repetition for reversed walks.
For the sake of notations, we shall think of $\sfT_\lambda$
in~\eqref{eq:Tlambda} as acting on non-rescaled spaces $\ell_2(\bbN)$,
with the norm
\[
\langle u, v\rangle_{2,\lambda} = h_\lambda \sum_{\sfr\in\bbN} u(\sfr)v(\sfr) .
\]
Compare with~\eqref{eq:l2-L}.

Similarly, we shall think of $\phi_\lambda$ and $\phi_\lambda^*$ as of
functions on $\bbN$.
Recall the normalizing constant $c_\lambda$ which was introduced 
in~\eqref{eq:c-lambda}, and recall that $\mu_\lambda (\sfx ) = c_\lambda 
\phi_\lambda (\sfx ) \phi_\lambda^* (\sfx )$ is the invariant measure of the 
positively recurrent chain on $\bbN$ with transition probabilities 
$\pi_\lambda$ specified in
\eqref{eq:pi-lambda}.
Define $g_M (\sfx ) = \1_{\{x > M\}} = \sum_{\sfx >M}\1_\sfx$.
Then,
\be{eq:T-l-lim}
\begin{split}
\lim_{N\to\infty} \sfT_\lambda^N g_M (\cdot) &=
\phi_\lambda (\cdot )
\lim_{N\to\infty} \pi_\lambda^N \left[
\sum_{\sfx>M}\frac{\1_\sfx}{\phi_\lambda (\sfx )}
\right]
(\cdot) = \phi_\lambda (\cdot )\sum_{\sfx >M}\frac{\mu_\lambda 
(\sfx)}{\phi_\lambda (\sfx )}
\\
&=
c_\lambda \phi_\lambda(\cdot) \sum_{\sfx>M} \phi_\lambda^*(\sfx) ,
\end{split}
\ee
for any $\lambda>0$.

For $\sfv>M$ and $k\geq 0$ let $\calQ_{k,+}^{\sfv ,M}$ be the family of $k$-step
paths $\bbX = (\sfx_0, \ldots, \sfx_k)$
which start at $\sfv$, $\sfx_0 = \sfv$, stay above level $M$,
and end up above level $2M$,  $\sfx_k>2M$. We employ the notation
(see~\eqref{eq:Tlambda})
\be{eq:restr-PF}
\sfT_\lambda^k \{\calQ_{k,+}^{\sfv ,M}\} =
\sum_{\bbX\in\calQ_{k,+}^{\sfv ,M}} \prod_1^k \sfT_\lambda (\sfx_{i-1},\sfx_i ) .
\ee
By convention, $\sfT_\lambda^0 \{\calQ_{0,+}^{\sfv ,M}\} = \1_{\{\sfv >2M\}}$.

\begin{figure}
\begin{center}
\scalebox{.46}{\input{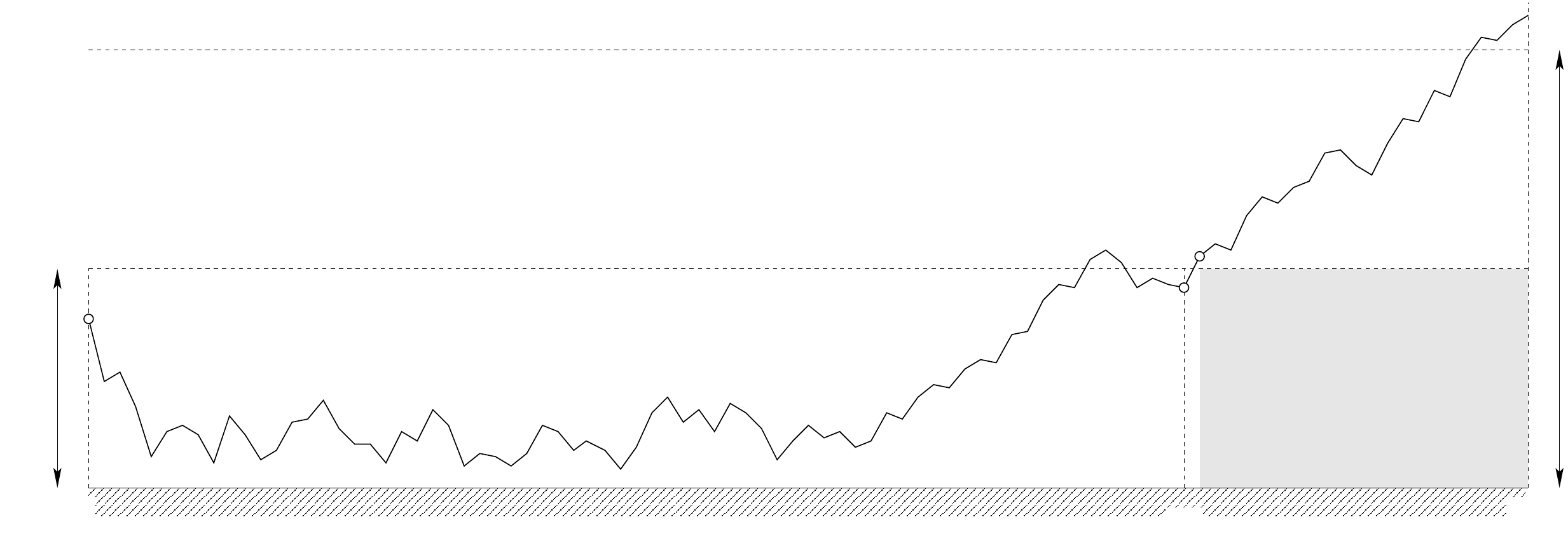_t}}
\end{center}
\caption{The last exit decomposition in~\eqref{eq:last-exit}. After time $N-r$,
the path cannot visit the shaded area and has to end up above level $2M$.}
\label{fig:LastExit}
\end{figure}

Let us fix $0<\sfy<M $ and consider paths
$\bbX\in\hat\calP_{N,+}^{\sfy,\emptyset}$
ending up above level $2M$, $X_N > 2M$.
By the last exit decomposition from $\{1,\ldots,M \}$ (see
Figure~\ref{fig:LastExit}),
\be{eq:last-exit}
\begin{split}
\sfT_\lambda^N g_{2M} (\sfy) &=
\sum_{\sfu \leq M} \sum_{\sfv > M}
\sum_{r=1}^{N-1} \sfT_\lambda^{N-r-1}
\left[\1_\sfu \right] (\sfy) \sfT_\lambda (\sfu ,\sfv)
\sfT_\lambda^{r-1}\{\calQ_{r-1,+}^{\sfv, M}\}
\\
&=
\sum_{\sfu \leq M} \sum_{\sfv > M}
\sum_{r=1}^{N-1} \sfT_\lambda^{N-r-1} \left[\1_\sfu \right] (\sfy)
E_\lambda^{-1} p_{\sfv-\sfu} \mathrm{e}^{-\frac{ V_\lambda(\sfu) +
V_\lambda(\sfv)}{2}} \sfT_\lambda^{r-1}\{\calQ_{r-1,+}^{\sfv, M}\} .
\end{split}
\ee
Taking the limit $N\to\infty$, we infer from~\eqref{eq:T-l-lim},
\eqref{eq:last-exit} and positivity of $\phi_\lambda$ and $c_\lambda$
that
\be{eq:leq-geq}
\sum_{\sfx > 2M} \phi^*_\lambda(\sfx) =
\sum_{\sfu \leq M} \phi^*_\lambda(\sfu) \sum_{\sfv > M} E_\lambda^{-1}
p_{\sfv-\sfu} \mathrm{e}^{-\frac{ V_\lambda(\sfu) + V_\lambda(\sfv)}{2}}
\sum_{r\geq 0} \sfT_\lambda^{r} \{\calQ_{r,+}^{\sfv ,M}\} .
\ee
Let us try to derive an upper bound on
\be{eq:max-term}
\max_{\sfu\leq M} \sum_{\sfv > M} E_\lambda^{-1} p_{\sfv-\sfu}
\mathrm{e}^{-\frac{ V_\lambda(\sfu) + V_\lambda(\sfv)}{2}}
\sum_{r\geq 0} \sfT_\lambda^{r}\{\calQ_{r,+}^{\sfv ,M}\} .
\ee

For simplicity, we shall prove directly the second inequality 
in~\eqref{eq:tail-phi-l}.
The arguments rely on the lower bound $H_\lambda^2 V_\lambda (M) \geq
q_0 (h_\lambda M)$. If instead we keep track of the original quantity
$H_\lambda^2 V_\lambda (M)$, then the first inequality in~\eqref{eq:tail-phi-l} 
will follow.

By Lemma~\ref{lem:elambda} and by our assumptions on $\sfp$ and $V_\lambda$,
\be{eq:v-bound-1}
E_\lambda^{-1} p_{\sfv-\sfu}
\mathrm{e}^{-\frac{ V_\lambda(\sfu) + V_\lambda(\sfv)}{2}}
\leq
\exp\bigl\{
- c_1 h_\lambda^2 \bigl( q_0 (h_\lambda M) - 1\bigr) - c_2 (\sfv - M )\bigr\} .
\ee
On the other hand, Lemma~\ref{lem:elambda} and crude estimates on the values
of the potential $V$ above level $M$ and of the hitting probability of
the half-line $\{2M,2M+1,\ldots\}$ by an $r$-step random walk which starts at
$\sfv$ imply that, for $r>1$,
\begin{align}
\sfT_\lambda^{r}\{\calQ_{r,+}^{\sfv ,M}\}
&\leq
E_\lambda^{-r} \mathrm{e}^{-r h_\lambda^2 q_0 (h_\lambda M)}
\RWP(X_r > 2M-\sfv) \notag\\
&\leq
\exp\Bigl\{ - c_3 h_\lambda^2 r \bigl( q_0(h_\lambda M) - 1\bigr) -
c_4 \frac{(2M-\sfv)_+^2}{r}\wedge (2M-\sfv)_+\Bigr\} .
\label{eq:Up-term-bound}
\end{align}
Indeed, the second term in the exponent on the right-hand side above follows
from the exponential Markov inequality $\RWP(X_r > a) \leq
\mathrm{e}^{-c_4\frac{a_+^2}{r}\wedge a_+}$.

The right-hand sides of both~\eqref{eq:v-bound-1} and~\eqref{eq:Up-term-bound}
are already independent of $\sfu$. Let us sum over $\sfv > M$. If $r=0$, then
$\sfv$ has to satisfy $\sfv>2M$.
Using~\eqref{eq:v-bound-1},
\be{eq:ubound_r0}
\sum_{\sfv>2M} E_\lambda^{-1} p_{\sfv-\sfu}
\mathrm{e}^{-\frac{ V_\lambda(\sfu) + V_\lambda(\sfv)}{2}} \leq
\exp\bigl\{-c_5\bigl( h_\lambda^2\bigl( q_0(h_\lambda M)-1 \bigr) + M
\bigr)\bigr\} .
\ee
For $r>0$, we take advantage of both upper bounds~\eqref{eq:v-bound-1}
and~\eqref{eq:Up-term-bound} above:
\be{eq:second-bound-v}
\sum_{\sfv>M} \mathrm{e}^{- c_2 (\sfv-M) - c_4 \frac{(2M-\sfv)_+^2}{r}
\wedge (2M-\sfv)_+} \leq \mathrm{e}^{-c_6 \frac{M^2}{r}\wedge M} .
\ee
Putting things together for $M=\Hla K$, we conclude that the expression
in~\eqref{eq:max-term} is bounded above by
\be{eq:FinBound-max}
\mathrm{e}^{-c_7 \Hla K} +
\sum_{r \geq 1} \exp\Bigl\{ - c_8 \bigl( h_\lambda^2 r q_0(K) +
\frac{(\Hla K)^2}{r}\wedge \Hla K \bigr)\Bigr\}
\leq c_{10} \mathrm{e}^{-c_9 K(\sqrt{q_0(K)} \wedge \Hla)} ,
\ee
uniformly in $K>0$ and $\lambda$ sufficiently small.

Coming back to~\eqref{eq:leq-geq}, we infer: For any $K>0$ fixed,
\be{eq:leq-geq-fin}
\sum_{\sfx > \Hla K} \phi^*_\lambda(\sfx) \leq
c_{11} \mathrm{e}^{-c_{12} K \sqrt{q_0(K)}}
\sum_\sfx \phi^*_\lambda(\sfx) ,
\ee
for all $\lambda <\lambda_0(K)$.
Notice that $\sum_\sfx \phi^*_\lambda(\sfx) <\infty$ by compactness of
$\sfT_\lambda$.

\smallskip
Let us return to our basic rescaling~\eqref{eq:Normalization} of $\phi_\lambda$
and $\phi_\lambda^*$ as unit norm elements of $\ell_2(\bbN_\lambda)$. The
bound~\eqref{eq:leq-geq-fin} can be rewritten as
\be{eq:tail-phi-star}
h_\lambda \sum_{\sfr > K} \phi^*_\lambda(\sfr) \leq
c_{11} \mathrm{e}^{-c_{12} K\sqrt{q_0(K)}} \|\phi_\lambda^*\|_{1,\lambda} .
\ee
Since $h_\lambda \sum_{\sfr\leq K} \phi^*_\lambda(\sfr) \leq
\sqrt{K} \|\phi_\lambda^*\|_{2,\lambda} = \sqrt{K}$, we conclude that
$\{\| \phi_\lambda^* \|_{1,\lambda}\}$ is a bounded sequence. The
bound~\eqref{eq:ell-1-bound} and, in view of~\eqref{eq:tail-phi-star},
also~\eqref{eq:tail-phi-l} follow. \qed

\subsection{Tightness of $(x_\lambda,\bbP_\lambda)$}
\label{sub:tightness}
Fix any $T <\infty$ and consider the family of rescaled processes $x_\lambda$
defined in~\eqref{eq:xN}.
Precisely, $x_\lambda$ is the linear interpolation of the rescaled  stationary
ergodic ground-state chain $X= X^\lambda $ with transition probabilities $\pi_\lambda$
defined in~\eqref{eq:pi-lambda} and invariant distribution $\mu_\lambda$.
With a slight abuse of notation, we shall continue to use $\bbP_\lambda$ for
the induced distribution of $x_\lambda(\cdot)$ on $\sfC[-T,T]$.
\begin{proposition}
\label{prop:tightness}
The family $(x_\lambda,\bbP_\lambda)$ is tight on $\sfC[-T,T]$.
\end{proposition}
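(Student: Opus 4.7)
The plan is to verify the Kolmogorov--Chentsov tightness criterion on $\sfC[-T,T]$: establish (i) tightness of the one-dimensional marginal $x_\lambda(0)$, and (ii) a fourth-moment increment bound
\[
\bbE_\lambda\bigl[(x_\lambda(t)-x_\lambda(s))^4\bigr] \leq c\,(t-s)^2,
\]
uniform in $s,t\in[-T,T]$ and in $\lambda$ small. Marginal tightness is immediate: $x_\lambda(0)$ is distributed as the pushforward of $\mu_\lambda=c_\lambda\phi_\lambda^*\phi_\lambda$ under $\sfx\mapsto h_\lambda\sfx$, so the tail estimate of Lemma~\ref{lem:phi-l-K} together with $c_\lambda/h_\lambda\to 1$ from Theorem~\ref{thm:phihat} yields $\bbP_\lambda(x_\lambda(0)>R)\to 0$ as $R\to\infty$, uniformly in small $\lambda$.

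For (ii), the main idea is to exploit the Doob-transform structure to reduce to an estimate for the unconstrained random walk. Since $V_\lambda\geq 0$, iterating
$\pi_\lambda(\sfx,\sfy)=\phi_\lambda(\sfy)/\phi_\lambda(\sfx)\cdot\sfT_\lambda(\sfx,\sfy)$
and bounding the exponential factors by $1$ gives the pointwise estimate
\[
\pi_\lambda^k(\sfx,\sfy)\leq E_\lambda^{-k}\,\frac{\phi_\lambda(\sfy)}{\phi_\lambda(\sfx)}\,\sfp^{*k}(\sfy-\sfx),
\]
where $\sfp^{*k}$ is the $k$-fold convolution of $\sfp$. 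By stationarity, I may take $s=0$ and $k=\lfloor H_\lambda^2 t\rfloor$. Integrating against $\mu_\lambda$ and introducing the variable $u=\sfy-\sfx$, the factors $\phi_\lambda(\sfx)$ cancel neatly, leaving
\[
\bbE_\lambda\bigl[(X_k-X_0)^4\bigr]\leq c_\lambda E_\lambda^{-k}\sum_{u}u^4\,\sfp^{*k}(u)\sum_{\sfx}\phi_\lambda^*(\sfx)\phi_\lambda(\sfx+u).
\]

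A single Cauchy--Schwarz step in $\sfx$ bounds the inner sum by $\|\phi_\lambda^*\|_{\ell^2(\bbN)}\|\phi_\lambda\|_{\ell^2(\bbN)}=H_\lambda$ (both factors having squared norm $H_\lambda$ under the normalization~\eqref{eq:Normalization}). Combined with $c_\lambda H_\lambda\to 1$, the bound $E_\lambda^{-k}\leq\mathrm{e}^{\bar\sfe\, k h_\lambda^2}$ from Lemma~\ref{lem:elambda}, and the standard fourth-moment estimate $\bbE_\sfp[X_k^4]\leq c\,k^2$ valid for mean-zero random walks with exponential moments, this yields $\bbE_\lambda[(X_k-X_0)^4]\leq c\,k^2$. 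Rescaling, $\bbE_\lambda[(x_\lambda(t)-x_\lambda(s))^4]=h_\lambda^4\,\bbE_\lambda[(X_k-X_0)^4]\leq c\,(t-s)^2$, which is the desired bound; the linear interpolation between grid points is harmless because its increments are convex combinations of those of the underlying chain.

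The main technical step is the Doob-transform reduction combined with the translation-invariance trick (relabeling $u=\sfy-\sfx$ and applying Cauchy--Schwarz in $\sfx$), which decouples the eigenfunctions from the convolution of the random walk kernel and thus reduces the tilted-chain moment estimate to the standard one for $\sfp$. Everything else is a direct consequence of the spectral and tail information already established in Subsections~\ref{sub:elambda}--\ref{sub:lem-phi} and the convergence statements of Subsection~\ref{sub:ConvEigen}.
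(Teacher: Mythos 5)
Your argument is correct, and it follows a genuinely different route to tightness than the paper. The paper verifies the modulus-of-continuity criterion, i.e.\ it shows
\[
\bbP_\lambda\bigl(\max_{0\leq t\leq\delta}\abs{x_\lambda(t)-x_\lambda(0)}>\epsilon\bigr)\leq\nu\delta,
\]
by conditioning on both endpoints $\sfx=X_0,\ \sfy=X_{\delta\Hla^2}$, using $V_\lambda\geq0$ to bound the restricted partition function by the free one, and then combining a local-CLT factor $h_\lambda/\sqrt\delta$ with a large-deviation factor $\mathrm{e}^{-\kappa\epsilon^2/\delta}$ in estimating $\RWP(\calA_{\sfx,\sfy})$; the final step uses the $\ell^1$ bounds on $\phi_\lambda,\phi_\lambda^*$ from Lemma~\ref{lem:phi-l-K}. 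You instead verify the Kolmogorov--Chentsov fourth-moment criterion. The technical inputs largely coincide: you use $V_\lambda\geq0$ to dominate $\pi_\lambda^k$ by the free walk after a telescoping Doob cancellation, Lemma~\ref{lem:elambda} for $E_\lambda^{-k}$, and Theorem~\ref{thm:phihat} for $c_\lambda/h_\lambda$. The interesting departure is that, after the substitution $u=\sfy-\sfx$, a single Cauchy--Schwarz step in $\sfx$ decouples the eigenfunctions from the walk and replaces the paper's reliance on $\ell^1$ bounds by the $\ell^2$ normalization~\eqref{eq:Normalization}; this also dispenses with the combined local-limit/large-deviation estimate~\eqref{eq:sfp-bounds}, trading it for the elementary fourth-moment bound $\bbE_\sfp[X_k^4]\leq ck^2$. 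So your version is arguably more elementary in its probabilistic ingredients. Two small points of hygiene: first, state explicitly that $E_\lambda^{-k}=\mathrm{e}^{\sfe_\lambda kh_\lambda^2}\leq\mathrm{e}^{\bar\sfe(2T+1)}$ for the relevant range of $k$, so the constant in $(t-s)^2$ depends only on $T$; second, the handling of non-grid $s,t$ deserves a sentence more than ``convex combination''---within a single grid cell one has $x_\lambda(t)-x_\lambda(s)=\tfrac{t-s}{h_\lambda}(X_{j+1}-X_j)$, so the fourth moment is $\tfrac{(t-s)^4}{h_\lambda^4}\bbE_\lambda[(X_1-X_0)^4]\leq c(t-s)^2$ precisely because $(t-s)\leq h_\lambda^2$ there, and the straddling cases follow by splitting at the interior grid points and applying the triangle inequality in $\bbL_4$.
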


\paragraph{Proof of Proposition~\ref{prop:tightness}}
Recall that the invariant measure $\mu_\lambda$  at $\lambda>0$ is given by
$\mu_\lambda = c_\lambda\phi_\lambda\phi_\lambda^*$. Thus, by
Theorem~\ref{thm:phihat}, the sequence $\{x_\lambda(0)\}$ is tight. It
remains to show that, for each $\epsilon,\nu>0$, there exists $\delta>0$ such
that
\be{eq:var-estimate}
\bbP_\lambda\bigl(
\max_{0\leq t\leq\delta}\abs{ x_\lambda(t) - x_\lambda(0)} > \epsilon
\bigr) \leq \nu \delta.
\ee
For any event $\calA\in \sigma(X_n\, :\,  0\leq n \leq \delta\Hla^2)$,
and for any $\sfx,\sfy\in\bbN$, let us define
$\calA_{\sfx,\sfy} =
\setof{\bbX \subset \calA}{X_0=\sfx, X_{\delta\Hla^{2}}=\sfy}$.
As in~\eqref{eq:restr-PF}, we employ the notation
\be{eq:restr-PF-A}
\sfT_\lambda^{\delta\Hla^2}\{\calA_{\sfx,\sfy}\}
= \sum_{\bbX\in\calA_{\sfx,\sfy}} \prod_1^{\delta\Hla^2} \sfT_\lambda (\sfx_{i-1},\sfx_i ) .
\ee
for the corresponding restricted partition function. In this way,
\be{eq:A-bound}
\bbP_\lambda(\calA) = c_\lambda \sum_{\sfx,\sfy\in\bbN} \phi_\lambda^*(\sfx)
\sfT_\lambda^{\delta\Hla^2}\{\calA_{\sfx,\sfy}\} \phi_\lambda(\sfy) .
\ee
Since $V_\lambda\geq 0$,
\[
\sfT_\lambda^{\delta\Hla^2}\{\calA_{\sfx,\sfy}\} \leq
\mathrm{e}^{-\delta\Hla^2 \log E_\lambda}
\RWP(\calA_{\sfx,\sfy}) .
\]
By Lemma~\ref{lem:elambda},
$\{ -\sfe_\lambda = \Hla^2 \log E_\lambda\}$ is a bounded sequence.
In the case of
\[
\calA = \bigl\{ \max_{0\leq n \leq\delta\Hla^2} \abs{X_n - X_0} >
\epsilon\Hla \bigr\} ,
\]
the upper bound on the probabilities
\be{eq:sfp-bounds}
\RWP(\calA_{\sfx,\sfy}) \leq
\kappa_1 \frac{h_\lambda}{\sqrt{\delta}}
\mathrm{e}^{-\kappa_2 \frac{\epsilon^2}{\delta}\wedge (\epsilon\Hla)}
\ee
holds uniformly in $\delta,\epsilon>0$, $\sfx,\sfy\in\bbN$ and $\lambda$ small.
By Theorem~\ref{thm:phihat}, $c_\lambda/h_\lambda $ is bounded. Putting things
together, we infer that
\begin{align}
\frac{1}{\delta} \bbP_\lambda\bigl( \max_{0\leq t\leq\delta}
\abs{ x_\lambda(t) - x_\lambda(0)}>\epsilon \bigr)
&\leq
\kappa_3\mathrm{e}^{-\kappa_2\frac{\epsilon^2}{\delta}\wedge (\epsilon\Hla)
-\frac{3}{2}\log\delta}
h_\lambda^2 \sum_{\sfx,\sfy\in\bbN} \phi_\lambda^*(\sfx) \phi_\lambda(\sfy)
\notag\\
&=
\kappa_3\mathrm{e}^{-\kappa_2\frac{\epsilon^2}{\delta}\wedge (\epsilon\Hla)
-\frac{3}{2}\log\delta }
\|\phi_\lambda^* \|_{1,\lambda} \|\phi_\lambda\|_{1,\lambda} ,
\label{eq:tightness-bound}
\end{align}
uniformly in $\delta,\epsilon>0$ and $\lambda$ small. Since
$ \|\phi_\lambda\|_{1,\lambda}$ and $\|\phi_\lambda^*\|_{1,\lambda}$ are
bounded and since $\Hla$ is bounded away from zero, we are home.\qed

\subsection{Asymptotic ground-state structure of
$\bbP_{N,+,\lambda}^{\sfu,\sfv}$}
\label{sub:AsympGS}

Let us fix $C>0$ and $T>1$.
For $\lambda>0$, $\sfu,\sfv\leq C \Hla$ and $N>2T\Hla^2$, we are going to
compare the restriction
$\bbP_{N,+,\lambda}^{\sfu,\sfv,T}$ of $\bbP_{N,+,\lambda}^{\sfu,\sfv}$
to the $\sigma$-algebra
\[
\calF_{\lambda,T}= \sigma(X_i \,:\, -T\Hla^2 \leq i \leq T\Hla^2)
\]
with the restriction $\bbP_{\lambda}^T$ of $\bbP_{\lambda}$ to
$\calF_{\lambda,T}$.
\begin{proposition}
\label{prop:gs-lN}
There exists $c_1>0 $ and $K=K(C,T)<\infty$ such that
\be{eq:gs-lN}
\| \bbP_{N, +, \lambda}^{\sfu,\sfv,T}-\bbP_{\lambda}^T \|_{\rm Var}
\leq 2 \mathrm{e}^{-c_1 {N}{\Hla^{-2}}} ,
\ee
uniformly in $\lambda$ small, $N>(T+K)\Hla^2$ and $\sfu,\sfv\leq C\Hla$.
Above, $\| \cdot \|_{\rm Var}$ is the total variational norm.
\end{proposition}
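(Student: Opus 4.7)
The plan is to reduce the total-variation estimate to a quantitative spectral-gap bound for the transfer operator $\sfT_\lambda$. The key observation is that the two measures induce exactly the same conditional law on the middle block $(X_i)_{|i|\leq T\Hla^2}$ once the endpoints $X_{\pm T\Hla^2}$ are fixed. Indeed, using the Doob identity $\pi_\lambda(\sfx,\sfy)=\sfT_\lambda(\sfx,\sfy)\phi_\lambda(\sfy)/\phi_\lambda(\sfx)$, the $\phi_\lambda$-factors along any middle trajectory telescope to $\phi_\lambda(X_{T\Hla^2})/\phi_\lambda(X_{-T\Hla^2})$ and hence cancel once the endpoints are conditioned upon; the resulting conditional weight $\prod_i\sfT_\lambda(X_i,X_{i+1})$ is identical to the one arising from $\bbP^{\sfu,\sfv}_{N,+,\lambda}$ via~\eqref{eq:Tlambda}. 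Therefore
\[
\|\bbP^{\sfu,\sfv,T}_{N,+,\lambda}-\bbP^T_\lambda\|_{\rm Var}=\|q^{\sfu,\sfv}_{N,+,\lambda}-q_\lambda\|_{\rm Var},
\]
where $q^{\sfu,\sfv}_{N,+,\lambda}$ and $q_\lambda$ are the joint laws of the endpoint pair $(X_{-T\Hla^2},X_{T\Hla^2})$. By~\eqref{eq:ReprTlambda} and~\eqref{eq:pi-reversed} these factor through the common middle kernel $\sfT_\lambda^{2T\Hla^2}(\sfx,\sfy)$:
\[
q^{\sfu,\sfv}_{N,+,\lambda}(\sfx,\sfy)=\frac{\sfT_\lambda^{N-T\Hla^2}(\sfu,\sfx)\,\sfT_\lambda^{2T\Hla^2}(\sfx,\sfy)\,\sfT_\lambda^{N-T\Hla^2}(\sfy,\sfv)}{\sfT_\lambda^{2N}(\sfu,\sfv)},
\]
and $q_\lambda(\sfx,\sfy)=c_\lambda\phi_\lambda^*(\sfx)\,\sfT_\lambda^{2T\Hla^2}(\sfx,\sfy)\,\phi_\lambda(\sfy)$.

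The next step would be to upgrade Theorem~\ref{thm:phihat} into the quantitative statement: there exist $c_1>0$ and $k_0<\infty$ such that, for all $k\geq k_0\Hla^2$, all $\sfu\leq C\Hla$, and all small $\lambda$,
\[
\sfT_\lambda^{k}(\sfu,\sfx)=c_\lambda\,\phi_\lambda(\sfu)\,\phi_\lambda^*(\sfx)\bigl(1+\rho_k(\sfu,\sfx)\bigr),
\]
with $\sum_{\sfx}\phi_\lambda^*(\sfx)|\rho_k(\sfu,\sfx)|\leq\mathrm{e}^{-c_1 k\Hla^{-2}}$. Substituting this approximation into the three $\sfT_\lambda^{N-T\Hla^2}$ factors in the numerator and into $\sfT_\lambda^{2N}$ in the denominator, the common $\phi_\lambda(\sfu)\phi_\lambda^*(\sfv)$ prefactors cancel, yielding
\[
q^{\sfu,\sfv}_{N,+,\lambda}(\sfx,\sfy)=q_\lambda(\sfx,\sfy)\bigl(1+O(\mathrm{e}^{-c_1 N\Hla^{-2}})\bigr)
\]
in $\ell^1(q_\lambda)$; summing in $(\sfx,\sfy)$ then delivers~\eqref{eq:gs-lN}.

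The technical core, and the main obstacle, is the uniform spectral-gap estimate underpinning the above approximation: one must prove that the subleading eigenvalue $E_\lambda^{(1)}$ of $\sfT_\lambda$ satisfies $\Hla^2(1-E_\lambda^{(1)})\geq c>0$ for all sufficiently small $\lambda$. I would obtain this by replaying the compactness-and-limit scheme of Subsection~\ref{sub:ConvEigen} for the second eigenfunction: the Courant--Fischer characterization combined with the energy estimate~\eqref{eq:reduction2}, now applied to normalised eigenfunctions $\bbL_2$-orthogonal to $\phi_\lambda$, yields $\slim$-compactness; any subsequential limit is then, via Proposition~\ref{thm:ConvSG}, an $\bbL_2(\bbR_+)$-eigenfunction of $\sfL$ orthogonal to $\varphi_0$ and therefore associated with an eigenvalue at most $-\eig_1$. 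This forces $\Hla^2(1-E_\lambda^{(1)})\to\eig_1-\eig_0>0$. The pointwise $\rho_k$-bound then follows from the usual spectral decomposition of $\sfT_\lambda$, with Lemma~\ref{lem:phi-l-K} used to restrict sums over $\sfx,\sfy$ to an $O(\Hla)$ window on which the rank-one projector $c_\lambda\phi_\lambda\otimes\phi_\lambda^*$ dominates uniformly.
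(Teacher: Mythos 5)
Your structural reduction is correct and rather clean: given the endpoints $(X_{-T\Hla^2},X_{T\Hla^2})$, the conditional law of the middle block under $\bbP^{\sfu,\sfv}_{N,+,\lambda}$ indeed agrees with the one under $\bbP_\lambda$ (the Doob factors $\phi_\lambda$ telescope and cancel), so the variational distance is exactly that of the two endpoint laws $q^{\sfu,\sfv}_{N,+,\lambda}$ and $q_\lambda$, and your formulas for both in terms of $\sfT_\lambda^{k}$ are right. This is a genuinely different route from the paper's, which proves~\eqref{eq:gs-lN} by a direct coupling argument: it tiles $\{-N,\ldots,N\}$ into blocks of length $\Hla^2$, uses Lemma~\ref{lem_ManyEtaGoodIntervals} to produce order $N\Hla^{-2}$ many ``$\eta$-good'' blocks on which both coupled replicas are confined near the wall, and then invokes the random-walk intersection estimate of Proposition~\ref{prop:RW-estimate} (a second-moment argument in the Appendix, quantitatively robust via the bound~\eqref{eq:eta-role} on the cost of the potential) to show the replicas meet with uniformly positive probability in each good block. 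That gives~\eqref{eq:coup} and hence the exponential bound, with no spectral information at all.

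The gap in your proposal is precisely the piece you flag as ``the technical core'': the quantitative, $\lambda$-uniform spectral-gap bound and the resulting kernel approximation $\sfT_\lambda^k(\sfu,\sfx)=c_\lambda\phi_\lambda(\sfu)\phi_\lambda^*(\sfx)(1+\rho_k)$ in a weighted $\ell^1$ sense. Two concrete obstructions arise. First, $\sfT_\lambda$ (equivalently the Doob chain $\pi_\lambda$) is \emph{not} self-adjoint for a general asymmetric step distribution $\sfp$ satisfying~\eqref{eq:Assumption}, so the Courant--Fischer variational characterization you invoke for the second eigenvalue is not available; you would have to work either with singular values or with the additive symmetrization $\tfrac12(\pi_\lambda+\pi_\lambda^*)$, and in either case the resulting gap does not directly translate into decay of $\sfT_\lambda^k-c_\lambda\phi_\lambda\otimes\phi_\lambda^*$ in the way a spectral theorem for normal operators would provide. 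Second, even granting a genuine gap $\Hla^2(1-E^{(1)}_\lambda)\geq c>0$ in some Hilbert-space sense, you need the much stronger pointwise/$\ell^1$ statement $\sum_\sfx\phi_\lambda^*(\sfx)|\rho_k(\sfu,\sfx)|\leq e^{-c_1k\Hla^{-2}}$ \emph{uniformly} in $\sfu\leq C\Hla$; this is a heat-kernel/ultracontractivity-type estimate, and the ratio $\rho_k$ involves dividing by $\phi_\lambda(\sfu)\phi_\lambda^*(\sfx)$, which degenerates near the boundary. Lemma~\ref{lem:phi-l-K} bounds tails but does not control these ratios from below. Filling these holes would essentially require redoing the compactness machinery of Subsection~\ref{sub:ConvEigen} for the second eigenpair \emph{and} supplying an $\ell^1$-to-$\ell^\infty$ smoothing estimate for $\sfT_\lambda$ uniform in $\lambda$ -- a substantial amount of work that the paper's coupling scheme avoids entirely.
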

As an  immediate consequence, we deduce the following
\begin{corollary}
\label{prop:gs-structure}
Let $\lambda_N$ be a sequence satisfying the assumptions of
Theorem~\ref{thm:A}. Let $C,T<\infty$ be fixed and assume that the 
sequences
$\sfu_N,\sfv_N\in\bbN$ satisfy $\sfu_N,\sfv_N \leq CH_{\lambda_N}$.
Consider the sequence of processes $x_N(\cdot) = x_{\lambda_N}(\cdot)$
defined via linear interpolation from~\eqref{eq:xN}.
With a slight abuse of notation, let $\bbP_{N,+,\lambda}^{\sfu,\sfv,T}$
and $\bbP_{\lambda_N}^T$ denote the induced distributions on $\sfC[-T,T]$
of $\bbP_{N,+,\lambda_N}^{\sfu,\sfv}$ and, respectively, of the direct
ground-state chain measure $\bbP_{\lambda_N}$.
Then,
\be{eq:gs-structure}
\lim_{N\to\infty}
\| \bbP_{N,+,\lambda_N}^{\sfu,\sfv,T}-\bbP_{\lambda_N}^T \|_{\rm Var} = 0 .
\ee
\end{corollary}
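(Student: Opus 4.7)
}
The plan is essentially to invoke Proposition~\ref{prop:gs-lN} along the sequence $(\lambda_N, \sfu_N, \sfv_N)$ and then push the resulting variational bound forward through the rescaling/interpolation map. The path $x_N(\cdot) \in \sfC[-T,T]$ is obtained by linear interpolation from the finitely many chain values $\{X_i\}_{-T H_{\lambda_N}^2 \leq i \leq T H_{\lambda_N}^2}$; in particular it is a deterministic, measurable function of the variables generating $\calF_{\lambda_N,T}$. Since pushforwards of probability measures under a measurable map are contractive for total variation, the variational distance between the induced laws of $x_N$ under $\bbP_{N,+,\lambda_N}^{\sfu_N,\sfv_N}$ and under $\bbP_{\lambda_N}$ is dominated by $\| \bbP_{N,+,\lambda_N}^{\sfu_N,\sfv_N,T} - \bbP_{\lambda_N}^T \|_{\rm Var}$.

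Next I would check that the hypotheses of Proposition~\ref{prop:gs-lN} are met for all sufficiently large $N$, with the given constants $C$ and $T$. One needs (i) $\lambda_N$ small, which holds since $\lambda_N \to 0$; (ii) $\sfu_N, \sfv_N \leq C H_{\lambda_N}$, which is part of the assumption; and (iii) $N > (T+K(C,T)) H_{\lambda_N}^2$, which follows from the hypothesis $\lim_{N\to\infty} N h_{\lambda_N}^2 = +\infty$, equivalently $H_{\lambda_N}^2 / N \to 0$, so that for $N$ large the inequality is satisfied with any fixed $K$.

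Combining these two observations, Proposition~\ref{prop:gs-lN} yields
\[
\| \bbP_{N,+,\lambda_N}^{\sfu_N,\sfv_N,T} - \bbP_{\lambda_N}^T \|_{\rm Var}
\leq 2\, \mathrm{e}^{-c_1 N H_{\lambda_N}^{-2}}
\]
for all $N$ large enough, and the right-hand side tends to $0$ since $N H_{\lambda_N}^{-2} \to \infty$. The corollary then follows by the pushforward contraction mentioned above.

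I do not expect a genuine obstacle here: all the probabilistic work is packaged inside Proposition~\ref{prop:gs-lN} (itself resting on the estimates of Section~\ref{sec:tools}). The only point that merits an explicit sentence is the contraction under the interpolation map, together with checking that the quantitative threshold $N > (T+K)H_{\lambda_N}^2$ is automatic under the standing hypothesis $N h_{\lambda_N}^2 \to \infty$; both are routine.
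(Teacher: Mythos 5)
Your argument is correct and matches the paper's intent exactly: the paper presents this corollary as an immediate consequence of Proposition~\ref{prop:gs-lN}, and your two observations (that the hypothesis $N h_{\lambda_N}^2\to\infty$ guarantees $N>(T+K)H_{\lambda_N}^2$ for large $N$, and that total variation only decreases under the measurable interpolation map) are precisely what makes it immediate. No gaps.
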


\begin{proof}[Proof of Proposition~\ref{prop:gs-lN}]
We shall use a coupling argument, considering two independent copies $X^1$ and
$X^2$ of the process, with possibly different boundary conditions.

The first step is to show that we can typically find many pieces of tubes of
length $\Hla^2$ and height of order $\Hla$ inside which both paths are
confined.

\begin{figure}
\begin{center}
\scalebox{.8}{\input{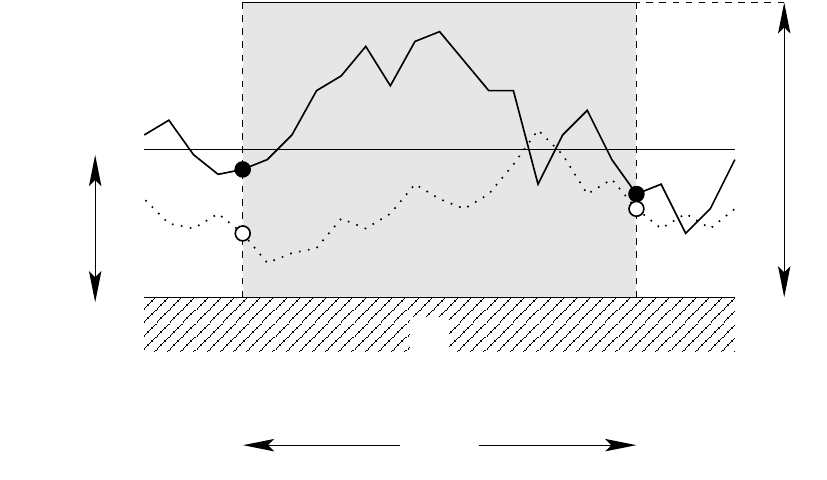_t}}
\end{center}
\caption{The interval $I_k$ is $\eta$-good if both paths $X^1$ and $X^2$ stay
inside the shaded area and take values smaller than $\eta\Hla$ at the
boundaries of the interval.}
\label{fig:EtaGood}
\end{figure}

Let us first split the interval of length $2N+1$ into
\[
m=\lfloor (2N+1)/\Hla^2\rfloor
\]
consecutive disjoint intervals
$I_1,I_2,\ldots,I_m$, of length $\Hla^2$ (plus, possibly, a final interval of
shorter length). We say that the interval $I_k$ is \emph{$\eta$-good} if (see
Figure~\ref{fig:EtaGood})
\[
\max_{i\in I_k} X^1_i < 2\eta\Hla,\quad
\max_{i\in I_k} X^2_i < 2\eta\Hla
\]
and the values of $X_i ; i=1,2,$ at the end-points of $I_k$ are less than
$\eta\Hla$.
\begin{lemma}\label{lem_ManyEtaGoodIntervals}
Given the realizations of the two paths $X^1$ and $X^2$, let us denote by $M$
the number of $\eta$-good intervals of the form $I_{3k+2}$, $0\leq k<m/3$.
Then, there exist $\eta$, $c_2>0$, $\rho>0$ and $K_0 <\infty$ such
that
\[
\bbP_{N,+,\lambda}^{0,0}\otimes\bbP_{N,+,\lambda}^{\sfu,\sfv} (M<\rho\tfrac m3)
\leq e^{-c_2 N \Hla^{-2}} ,
\]
uniformly in $\lambda$ small, $0\leq\sfu,\sfv\leq C\Hla$ and $N\geq K_0\Hla^2$.
\end{lemma}
\begin{proof}
We first show that it is very unlikely for $X^1$ or $X^2$ to stay far away from
the wall for a long time. Indeed, let us write $B$ for the number of intervals
$I_k$ such that $\min_{i\in I_k} X_i > \eta\Hla$. Then, for any $\epsilon>0$,
there exists $\eta(C,\epsilon)$ such that for all $\eta>\eta(C,\epsilon)$,
\begin{equation}\label{eq_NotTooFarTooLong}
\bbP_{N,+,\lambda}^{\sfu,\sfv} (B>\epsilon m) \leq e^{-c_3 N \Hla^{-2}} ,
\end{equation}
for some constant $c_3>0$, uniformly in $0\leq\sfu,\sfv\leq C\Hla$. Indeed, on
the event $B>\epsilon m$,
\[
\sum_{i=-N}^N V_\lambda(X_i) \geq \epsilon m\Hla^2 V_\lambda(\eta\Hla)
\geq \epsilon q_0(\eta) (2 N\Hla^{-2} - 1) ,
\]
which provides an upper bound on $Z_{N,+,\lambda}^{\sfu,\sfv}[B>\epsilon m]$.
\begin{remark}
\label{rem:P-l-cut}
A similar argument applies for the stationary measure $\bbP_\lambda$. This
means that we may derive our target~\eqref{eq:gs-lN} for
$\bbP_\lambda ( \cdot \given X_{-N}, X_N \leq \eta \Hla )$ instead
of deriving it for $\bbP_\lambda$ itself.
\end{remark}
The claim~\eqref{eq_NotTooFarTooLong} then follows by using the lower
bound~\eqref{eq_lowerbound_Z} on the partition function (and taking $\eta$ large
enough).

\begin{figure}
\begin{center}
\scalebox{.58}{\input{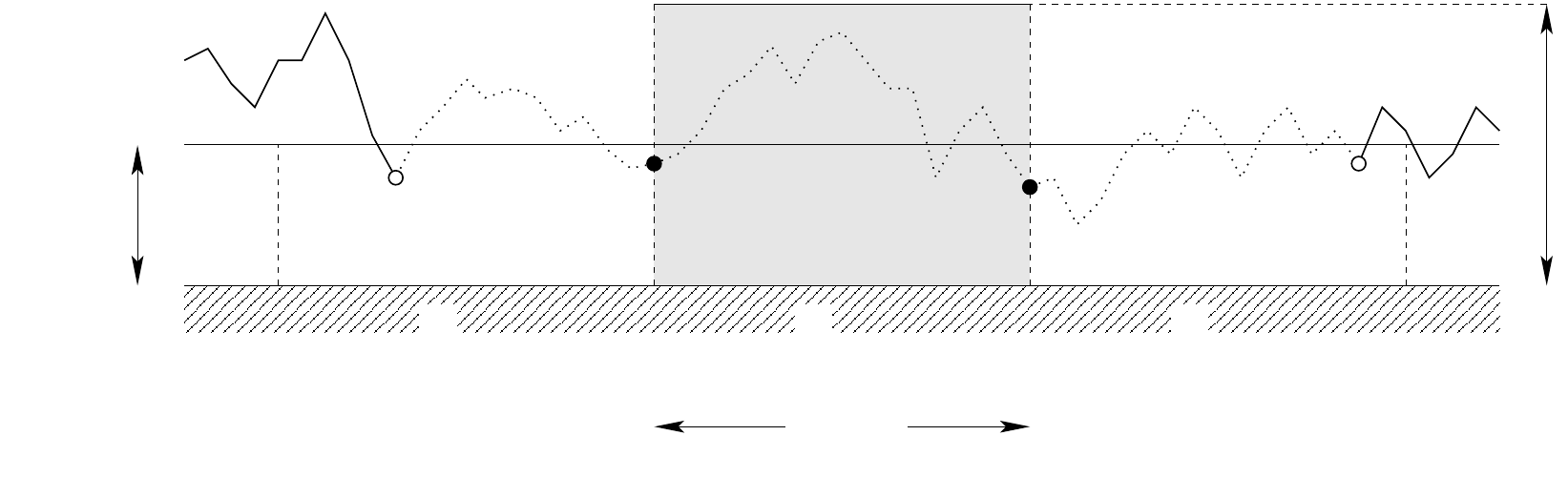_t}}
\end{center}
\caption{When $\min_{i\in I_{3k+1}}X_i^1 < \eta\Hla$ and $\min_{i\in
I_{3k+3}}X_i^1 < \eta\Hla$, there is a uniformly (in $\lambda$) positive
probability that $\max_{i\in I_{3k+2}}X_i^1 < 2\eta\Hla$ while taking values
smaller than $\eta\Hla$ at the boundary of $I_{3k+2}$ (black dots). The white
vertices correspond to the position of the path at times $\ell_k^1$ and
$r_k^1$.}
\label{fig:PotEtaGood}
\end{figure}

Let us say that the triple $(I_{3k+1},I_{3k+2},I_{3k+3})$ is \emph{potentially
$\eta$-good} if (see Figure~\ref{fig:PotEtaGood})
\[
\min_{i\in I_{3k+1}} X^1_i < \eta\Hla,\quad
\min_{i\in I_{3k+1}} X^2_i < \eta\Hla,\quad
\min_{i\in I_{3k+3}} X^1_i < \eta\Hla,\quad
\min_{i\in I_{3k+3}} X^2_i < \eta\Hla.
\]
Let us denote by $\tilde M$  the number of potentially $\eta$-good triples.
We deduce from~\eqref{eq_NotTooFarTooLong} that, for any $\epsilon>0$, we can
find $\eta$ such that
\[
\bbP_{N,+,\lambda}^{0,0}\otimes\bbP_{N,+,\lambda}^{\sfu,\sfv}
(\tilde M \leq (1-\epsilon) \tfrac m3) \leq  e^{-c_4 N\Hla^{-2}} ,
\]
for some $c_4>0$.

Now, given a potentially $\eta$-good triple $(I_{3k+1},I_{3k+2},I_{3k+3})$, let
\[
\ell_k^1 = \min\setof{i\in I_{3k+1}}{X^1_i<\eta\Hla},\, r_k^1 =
\max\setof{i\in I_{3k+3}}{X^1_i<\eta\Hla} .
\]
Conditionally on $X^1_{\ell_k^1}$ and $X^1_{r_k^1}$, the probability that
$X^1_i\leq 2\eta\Hla$ for all $\ell_k^1<i<r_k^1$ and that both walks sit below
$\eta\Hla$ at the end-points of $I_{3k+2}$ is bounded away from zero,
uniformly in $\lambda$. Indeed, uniformly in $\sfx,\sfy<\eta\Hla$,
$n \leq 3\Hla^2$ and $1\leq k < m \leq n$,
\begin{multline*}
\bbP_{n,+,\lambda}^{\sfx,\sfy}\bigl(
\max_i X^1_i\leq2\eta\Hla ; X^1_k <\eta\Hla ; X^1_m < \eta \Hla \bigr) \\
\geq
e^{-6 q(2\eta)}\,\bbP_{n,+,0}^{\sfx,\sfy}
\bigl(\max_i X^1_i\leq 2\eta\Hla; X^1_k <\eta\Hla; X^1_m < \eta\Hla\bigr) ,
\end{multline*}
since $n V_\lambda(2\eta\Hla) \leq 6q(2\eta)$ (and $\hat
Z^{\sfx,\sfy}_{n,+,\lambda}\leq 1$).
That the latter probability is bounded below is a consequence of the invariance
principle.

The claim of the lemma now follows easily, since, conditionally on the pieces
of paths between $r_{k-1}^1$ and $\ell_k^1$, these events are independent (and
since the same argument can be made independently for $X^2$).
\qed
\end{proof}
Now that we know that we can find $O(N\Hla^{-2})$ $\eta$-good intervals, the
main observation is that, inside each such interval, there is a uniformly
positive probability that the two paths meet. Let us make this more precise:
\paragraph{Definition}
For $\frac{n}{3}\leq m\leq n$, let $\calR_{n,m}^{\sfx,\sfy}[\eta]$ be the
set of paths $\bbX = (X_1,\ldots,X_m)$ satisfying $X_1 = \sfx$, $X_m = \sfy$
and $0<X_i<2\eta\sqrt{n}, i=1,\ldots,n$.
We shall employ the short-hand notation $\calR_{n}^{\sfx,\sfy}[\eta] =
\calR_{n,n}^{\sfx,\sfy}[\eta]$;
the argument $\eta$ will often be dropped when no ambiguity arises.
We also set $\calR_{n,m}^{\sfx,\sfy,+} =
\calR_{n,m}^{\sfx,\sfy}[\infty]$.
\begin{proposition}
\label{prop:RW-estimate}
There exists $\eta_0 >0$ such that the following happens: For every $\eta \geq
\eta_0$, one can find $n_0 = n_0(\eta)\in\bbN$ and $p=p(\eta)>0$ such
that
\be{eq:RW-estimate}
\bbP_{n,+,0}^{\sfx,\sfy}\otimes\bbP_{n,+,0}^{\sfz,\sfw}
(\exists i \,:\, X^1_i = X^2_i \given
\calR_n^{\sfx,\sfy}\times\calR_n^{\sfz,\sfw}) \geq p ,
\ee
uniformly in $n\geq n_0$ and $\sfx,\sfy,\sfz,\sfw\in (0, \eta\sqrt{n}]\cap\bbN$.
\end{proposition}
Proposition~\ref{prop:RW-estimate} is a statement about random walks with
transition probabilities $\RWP$. We relegate the proof to the Appendix and
proceed with the proof of Proposition~\ref{prop:gs-lN}.

First of all pick $n =H_\lambda^2$ and note that, in view of
Assumption~\eqref{eq:HL-2}, the following happens:
For any $\sfx, \sfy \leq \eta H_\lambda$ and any path $\bbX\in
\calR_{n}^{\sfx,\sfy}[\eta]$, the value of the potential satisfies
\be{eq:eta-role}
0 \leq
\sum_1^n V_\lambda (X_i )\leq n V_\lambda  (2\eta\sqrt{n}) =
H_\lambda^2 V_\lambda (2\eta H_\lambda )  \leq 2 q(2\eta ) ,
\ee
for all $\lambda$ sufficiently small. In fact, \eqref{eq:eta-role} was precisely the
reason to introduce the  notion of $\eta$-good intervals. An immediate consequence of
\eqref{eq:RW-estimate} and \eqref{eq:eta-role} is that
\be{eq:RW-estimate-l}
\bbP_{H_\lambda^2,+,\lambda}^{\sfx,\sfy}\otimes\bbP_{H_\lambda^2,+,\lambda}^{\sfz,\sfw}
(\exists i \,:\, X^1_i = X^2_i \given
\calR_{H_\lambda^2}^{\sfx,\sfy}\times\calR_{H_\lambda^2}^{\sfz,\sfw}) \geq p {\rm e}^{-4 q (2\eta )},
\ee
for all $\lambda$ sufficiently small. The formula~\eqref{eq:RW-estimate-l}
provides a uniform lower bound on probability of coupling inside an $\eta$-good
interval.

Consider the product measures
$\bbP_{N,+,\lambda_N}^{0,0}\otimes\bbP_{N,+,\lambda}^{\sfu,\sfv}$.
Let $\calM$ be the event that the paths $X^1$ and $X^2$ meet both on the
left and on the right of the segment $[-T,T]$. It follows from
Lemma~\ref{lem_ManyEtaGoodIntervals} and \eqref{eq:RW-estimate-l}
that there exist $K = K(C,T)$ and  $c_5>0$ such that
\be{eq:coup}
\bbP_{N,+,\lambda}^{0,0}\otimes\bbP_{N,+,\lambda}^{\sfu,\sfv} (\calM)
\geq 1-e^{-c_5 N\Hla^{-2}} ,
\ee
uniformly in $\lambda$ small, $\sfu , \sfv \leq C\Hla$ and $N > (K + T)\Hla$.

For $\ell < -T\Hla^2 , r >T\Hla^2$ and $\sfx,\sfy\in\bbN$, let
$\calM_{\ell,r}^{\sfx,\sfy}\subset\calM$ be the event that $\ell$ is
the leftmost meeting point of $X^1$, $X^2 $, and $X^1_\ell = X^2_\ell = \sfx$,
whereas $r$ is the rightmost meeting point of $X^1$, $X^2$, and
$X^1_r = X^2_r = \sfy$. In this notation, $\calM$ is the disjoint union,
$\calM = \cup\calM_{\ell,r}^{\sfx,\sfy}$.

Let $\calA\in\calF_{\lambda,T}$. Then,
\begin{align*}
\bbP_{N,+,\lambda}^{\sfu,\sfv}(\calA) &=
\bbP_{N,+,\lambda}^{0,0}\otimes\bbP_{N,+,\lambda}^{\sfu,\sfv}(\Omega\times\calA)
\\
&=
\bbP_{N,+,\lambda}^{0,0}\otimes\bbP_{N,+,\lambda}^{\sfu,\sfv}
(\Omega\times\calA;\calM^{c}) + \sumtwo{\ell,r}{\sfx,\sfy}
\bbP_{N,+,\lambda}^{0,0}\otimes\bbP_{N,+,\lambda}^{\sfu,\sfv}
(\Omega\times\calA;\calM^{\sfx,\sfy}_{\ell,r}) .
\end{align*}
However,
\[
\bbP_{N,+,\lambda}^{0,0}\otimes\bbP_{N,+,\lambda}^{\sfu,\sfv}
(\Omega\times\calA;\calM^{\sfx,\sfy}_{\ell,r}) =
\bbP_{N,+,\lambda}^{0,0}\otimes\bbP_{N,+,\lambda}^{\sfu,\sfv}
(\calA\times\Omega;\calM^{\sfx,\sfy}_{\ell,r}) .
\]
Therefore ,
\[
\abs{\bbP_{N,+,\lambda}^{\sfu,\sfv} (\calA) - \bbP_{N,+,\lambda}^{0,0} (\calA)}
\leq
\bbP_{N,+,\lambda}^{0,0}\otimes\bbP_{N,+,\lambda}^{\sfu,\sfv}
(\calM^{c}) ,
\]
which, in view of Remark~\ref{rem:P-l-cut} and~\eqref{eq:coup},
implies~\eqref{eq:gs-lN}.
\qed
\end{proof}

\appendix
\section{Proof of Proposition~\ref{prop:RW-estimate}}
We shall employ here our original notation $\RWP$ for the path measure of the
underlying random walk. Our argument is based on the second moment method, which
is put to work using the following input from~\cite{CIL,CCh13}:

\paragraph{Bounds on probabilities of random walks to stay positive}
There exists $\eta_0$, such that for any $\eta \geq \eta_0$ the following
happens: One can find $n_0 = n_0(\eta)$, $c_1 = c_1(\eta)$ and
$c_2 = c_2(\eta)$ such that
\be{eq:CIL}
c_1\frac{\sfx \sfy}{n^{3/2}} \leq \RWP(\calR_{n,m}^{\sfx,\sfy}\left[\eta\right])
\leq c_2\frac{\sfx \sfy}{n^{3/2}} ,
\ee
uniformly $n \geq n_0$, $\frac{n}{3}\leq m\leq n$ and
$1\leq \sfx,\sfy\leq\eta\sqrt{n}$.
Indeed, in view of the invariance principle for random walk
bridges~\cite[Theorem~2.4]{CCh13}, the restriction $X_i\leq 2\eta\sqrt{n}$
may be removed from $\calR_{n,m}^{\sfx,\sfy}[\eta]$ in the following sense:
There exists $c = c(\eta) \in [1,\infty)$, such that
\be{eq:nu-plus}
1\leq \frac{\RWP(\calR_{n,m}^{\sfx,\sfy , +})}{\RWP(\calR_{n,m}^{\sfx,\sfy}
[\eta ])}
\leq c(\eta ) ,
\ee
uniformly in $n \geq n_0$, $\frac{n}{3}\leq m\leq n$ and
$\sfx,\sfy\leq\eta\sqrt{n}$.
Hence, the two-sided inequality~\eqref{eq:CIL} can be verified along the lines
of the
proof of Theorem~4.3 in \cite{CIL}, where a stronger asymptotic statement was
derived for a more restricted range of parameters.

\medskip
Let $1\leq \sfx,\sfy,\sfz,\sfw\leq\eta\sqrt{n}$ and consider now the product measure,
\[
\RWP\otimes\RWP \lb\cdot \given \calR_n^{\sfx,\sfy}\left[\eta\right]\times\calR_n^{\sfz,\sfw}
\left[\eta\right]
\rb
.
\]
Let $\calN$ be the number of intersections of the two replicas $X^1$ and $X^2$
during the time interval $[\frac{n}{3},\frac{2n}{3}]$ below level
$\eta\sqrt{n}$.
Precisely,
\be{eq:N-int}
\calN = \#\bsetof{\ell\in [\frac{n}{3},\frac{2n}{3}]}
{X^1_\ell = X^2_\ell \leq \eta\sqrt{n}}
=
\sum_{\ell=\frac{n}{3}}^{\frac{2n}{3}} \sum_{\sfu\leq\eta\sqrt{n}}
\1_{\{ X^1_\ell = X^2_\ell = \sfu\}} .
\ee
\paragraph{Lower bound on the expectation
$\RWP\otimes\RWP (\calN \given \calR_n^{\sfx,\sfy}\times\calR_n^{\sfz,\sfw})$}
The expectation
\[
\RWP\otimes\RWP (\calN \given \calR_n^{\sfx,\sfy}\times\calR_n^{\sfz,\sfw}) =
\sum_{\ell=\frac{n}{3}}^{\frac{2n}{3}} \sum_{\sfu\leq\eta\sqrt{n}}
\Phi_n(\ell,\sfu;\sfx,\sfy,\sfz,\sfw) ,
\]
where (see Figure~\ref{fig:E1})
\begin{figure}
\begin{center}
\scalebox{.5}{\input{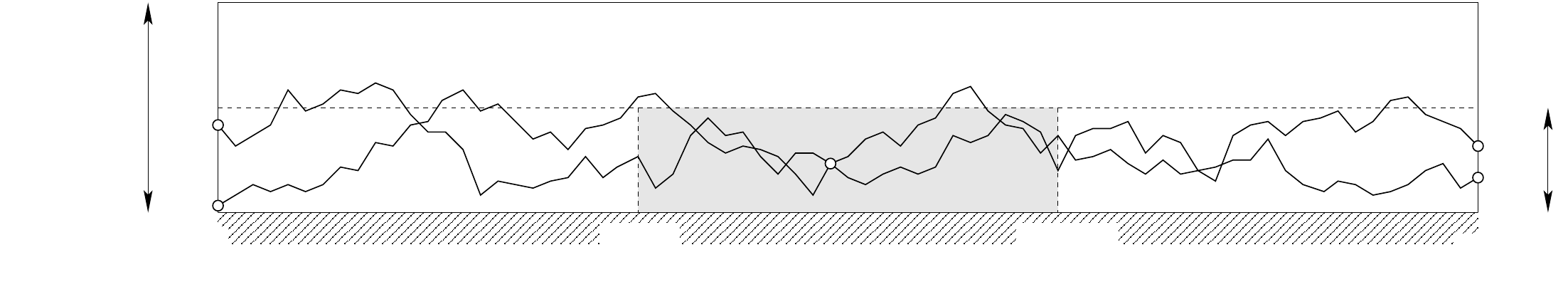_t}}
\end{center}
\caption{The decomposition of
$\RWP\otimes\RWP (\calN \given \calR_n^{\sfx,\sfy}\times\calR_n^{\sfz,\sfw})$.}
\label{fig:E1}
\end{figure}
\[
\Phi_n(\ell,\sfu;\sfx,\sfy,\sfz,\sfw) =
\frac{
\RWP(\calR_{n,\ell}^{\sfx,\sfu})
\RWP(\calR_{n,\ell}^{\sfz,\sfu})
\RWP(\calR_{n,n-\ell}^{\sfu,\sfy})
\RWP(\calR_{n,n-\ell}^{\sfu,\sfw})
}{
\RWP(\calR_{n,\ell}^{\sfx,\sfy})
\RWP(\calR_{n,\ell}^{\sfz,\sfw})
} .
\]
By~\eqref{eq:CIL},
\[
\Phi_n (\ell,\sfu;\sfx,\sfy,\sfz,\sfw) \geq c_3 \frac{\sfu^4}{n^3} ,
\]
uniformly in all the arguments in question. Consequently,
\be{eq:EN-lb}
\RWP\otimes\RWP (\calN \given \calR_n^{\sfx,\sfy}\times\calR_n^{\sfz,\sfw})
\geq c_4 (\eta )\sqrt{n} ,
\ee
also uniformly in $\sfx,\ldots,\sfw \leq \eta\sqrt{n}$.

\paragraph{Upper bound on the expectation
$\RWP\otimes\RWP(\calN^2 \given\calR_n^{\sfx,\sfy}\times\calR_n^{\sfz,\sfw})$}

The expectation
\[
\RWP\otimes\RWP (\calN^2 \given \calR_n^{\sfx,\sfy}\times\calR_n^{\sfz,\sfw})
\leq
\sum_{\substack{\ell,m=\frac{n}{3}\\\ell\leq m}}^{\frac{2n}{3}}
\sum_{\sfu,\sfv\leq \eta\sqrt{n}}
\Psi_n(\ell,\sfu;m,\sfv;\sfx,\sfy,\sfz,\sfw),
\]
where (see Figure~\ref{fig:E2})
\begin{figure}
\begin{center}
\scalebox{.5}{\input{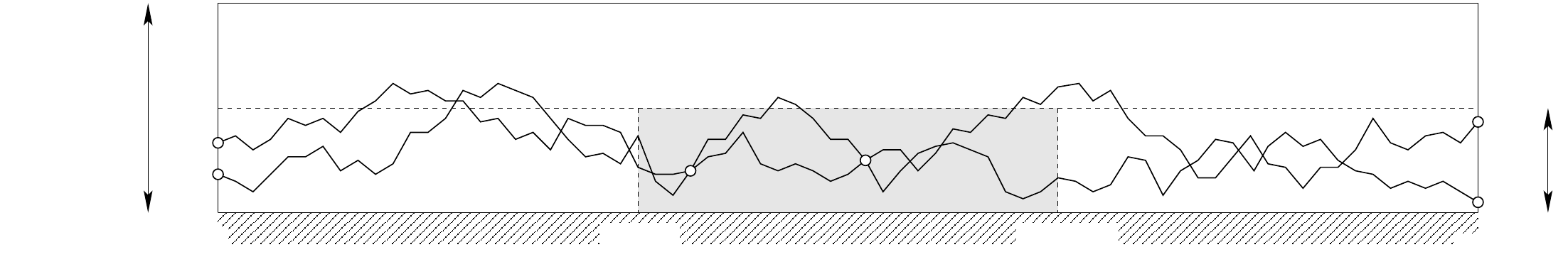_t}}
\end{center}
\caption{The decomposition of
$\RWP\otimes\RWP (\calN^2 \given
\calR_n^{\sfx,\sfy}\times\calR_n^{\sfz,\sfw})$.}
\label{fig:E2}
\end{figure}
\be{eq:Psi}
 \Psi_n (\ell ,\sfu ; m, \sfv ; \sfx, \sfy, \sfz, \sfw ) =
 \frac{\sfp (\calR_{n, \ell}^{\sfx , \sfu })
 \sfp (\calR_{n, \ell}^{\sfz , \sfu }) \sfp_{m-\ell} (\sfu , \sfv )^2
 \sfp (\calR_{n, n-m}^{\sfv, \sfy })
  \sfp (\calR_{n, n-m}^{\sfv , \sfw })
 }
 {\sfp (\calR_{n, \ell}^{\sfx , \sfy })
 \sfp (\calR_{n, \ell}^{\sfz , \sfw })} .
\ee
Above, $\RWP_r(\sfu,\sfv)$ is a short-hand notation for
$\RWP(X_r = \sfv \given X_0=\sfu)$. The inequality in~\eqref{eq:Psi}
is due to the fact that we ignore the positivity condition on the interval
${\ell,\ldots,m}$.

By~\eqref{eq:CIL},
\[
\Psi_n(\ell,\sfu;m,\sfv;\sfx,\sfy,\sfz,\sfw)
\leq c_6(\eta) \frac{\RWP_{m-\ell}(\sfu,\sfv)^2}{n} ,
\]
uniformly in all the arguments in question. Consequently,
\[
\sum_{\substack{\ell,m =\frac{n}{3}\\\ell\leq m}}^{\frac{2n}{3}}
\sum_{\sfu,\sfv\leq\eta\sqrt{n}}
\Psi_n(\ell,\sfu;m,\sfv;\sfx,\sfy,\sfz,\sfw)
\leq c_7 \sqrt{n} \sum_{r=0}^{\frac{n}{3}}
\sum_\sfv \RWP_r (0,\sfv)^2 .
\]
The double sum on the right-hand side above is equal to the expectation
of the number of intersections of two independent $\sfp$-walks during the first
$\frac{n}{3}$ steps of their life. It is bounded above by $c_8\sqrt{n}$.
We conclude that
\be{eq:EN2-up}
\RWP\otimes\RWP (\calN^2 \given \calR_n^{\sfx,\sfy}\times\calR_n^{\sfz,\sfw})
\leq c_9 n .
\ee

\medskip
The lower and upper bounds~\eqref{eq:EN-lb} and~\eqref{eq:EN2-up} imply the
existence of $\nu=\nu(\eta)>0$, such that
\be{eq:CZ}
\RWP\otimes\RWP (\calN^2 \given \calR_n^{\sfx,\sfy}\times\calR_n^{\sfz,\sfw})
\leq
\nu [\RWP\otimes\RWP (\calN \given \calR_n^{\sfx,\sfy}\times\calR_n^{\sfz,\sfw})
]^2 ,
\ee
uniformly in $\sfx,\sfy,\sfz,\sfw\leq\eta\sqrt{n}$.
Set $\calE = \RWP\otimes\RWP (\calN \given
\calR_n^{\sfx,\sfy}\times\calR_n^{\sfz,\sfw})$.
By the Paley-Zygmund inequality,
\be{eq:CZ-input}
\RWP\otimes\RWP (\calN > \alpha\calE \given
\calR_n^{\sfx,\sfy}\times\calR_n^{\sfz,\sfw})
\geq \frac{(1-\alpha)^2}{\nu} ,
\ee
for every $\alpha\leq 1$. \eqref{eq:RW-estimate} follows.

\bibliographystyle{plain}
\bibliography{ISV14-final}

\end{document}